\newcommand\diag{\text{diag}}
\definecolor{newcolor}{rgb}{.8,.349,.1}
\tikzstyle{startstop} = [rectangle, rounded corners, minimum width=2cm, minimum height=0.8cm,text centered, draw=black]
\tikzstyle{io} = [trapezium, trapezium left angle=78, trapezium right angle=102, minimum height=0.8cm, text centered, draw=black]
\tikzstyle{process} = [rectangle, minimum width=1cm, minimum height=0.8cm, text centered, draw=black]
\tikzstyle{decision} = [diamond, minimum width=1cm, minimum height=0.4cm, text centered, draw=black, aspect=2]
\tikzstyle{arrow} = [thick,->,>=stealth]
\tikzset{IBMRRN/.style = {
     base/.style = {draw=black, 
                    inner sep=1mm, outer sep=0mm,
                    text width=2.5cm, minimum height=0.8cm,
                    align=flush center},
       io/.style = {base, trapezium,
                    trapezium left angle=78, trapezium right angle=-78,
                    trapezium stretches=false, 
                    },
                    }}
\newcommand{\midd}{\;\|\;}
\newcommand{\abs}[1]{\left | #1 \right |}
\newcommand{\norm}[1]{\| #1 \|}
\newcommand{\rel}{\mathcal{R}}
\newcommand{\re}[2]{\rel(#1\midd #2)}
\newcommand{\rer}[2]{\mathcal{H}(#1\midd #2)}
\newcommand{\pFIM}[1]{\mathcal{I}(#1)}
\newcommand{\pFIMH}[1]{\mathcal{I}_{\mathcal{H}}(#1)}
\newcommand{\expt}[2]{\mathbb{E}_{#2}\left[#1\right]}
\newcommand{\prob}[1]{\text{Pr}\left\{ #1 \right\}}
\newcommand{\var}[2]{\text{Var}_{#2}\left[#1\right]}
\newcommand\latt{\mathcal{L}}
\newcommand\Dt{{\Delta t}}
\newcommand\tr[1]{\text{Trace}\left( #1 \right)}
\newcommand\CG{\mathbf{\varPi}}
\newcommand\CGp{\mathbf{\varGamma}}
\newcommand\comp{\perp}
\newtheorem{thm}{Theorem}[section]
\newtheorem{lemm}[thm]{Lemma}
\newtheorem{prop}[thm]{Proposition}
\newtheorem{rem}[thm]{Remark}
\def\VIZ#1{(\ref{#1})}      
\def\FISHER#1{{\mathcal I}\big(#1\big)}
\def\FISHERR#1{{\mathcal I}_{\mathcal{H}}\big(#1\big)}
\def\INIT{\nu}
\def\INITAPP{\nu^{\theta}}
\begin{document}


\begin{frontmatter}

\title{Data-driven, Variational Model Reduction of high-dimensional Reaction Networks}

\author[umass]{Markos A. Katsoulakis\corref{cor1}}
\ead{markos@umass.math.edu}

\author[umass]{Pedro Vilanova}
\ead{pedro.vilanova@gmail.com}

\address[umass]{Department of Mathematics and Statistics, University
of Massachusetts  Amherst, Amherst MA 01002, USA}

\cortext[cor1]{Corresponding author. We use alphabetical convention in author's name order.}

\begin{abstract}
In this work we present new scalable, information theory-based variational methods for the efficient  model reduction of high-dimensional deterministic and stochastic reaction networks. The proposed  methodology combines, (a) information theoretic tools for sensitivity analysis that allow us to identify the proper coarse variables of the reaction network, with (b) variational approximate inference methods for training  a best-fit reduced model. This approach takes advantage of  both physicochemical modeling and data-based approaches and allows to construct optimal parameterized reduced dynamics in the number of variables, reactions and parameters, while controlling the information loss due to the reduction. We demonstrate the effectiveness of our model reduction method  on several complex, high-dimensional chemical reaction networks arising in biochemistry.
\end{abstract}

\begin{keyword}
Model reduction, 
Pathwise relative entropy, Pathwise Fisher information matrix, Variational Inference, Reaction Networks, Markov processes, Scientific Machine Learning.
\end{keyword}


\end{frontmatter}

\pagestyle{myheadings}
\thispagestyle{plain}

\setcounter{tocdepth}{1}

\section{Introduction}

The modeling and simulation of complex biochemical systems typically involves  non-linear and high-dimensional dynamical systems,  
in terms of both state variables and parameters \cite{SUTTON2015190,III201501}. Of particular importance would be a simpler model able to capture key characteristics of the biochemical system and therefore more amenable for analysis, parameter identification, statistical inference, and eventually design and optimization. Model reduction techniques seek to obtain such models, but in many cases the required computational work may quickly become prohibitive. Additionally, modeling goals usually constrain these reduction methods,  for example, the biochemical meaning of the state variables is likely to be difficult to interpret if non-linear (and even linear) coarse-graining transformations are applied during the model reduction process. 
The most widely applied reduction methods can be roughly classified as timescale exploitation approaches \cite{III201501,briggs1925note,rou01, kooi02, radulescu2008robust, petrov2007reduction,Schneider2000,maas1992simplifying,vallabhajosyula2005conservation,zobeley2005new, surovtsova2009accessible}, reduction methods based on sensitivity analysis \cite{liu2004sensitivity,DEGENRING2004729, apri2012complexity, turanyi1990sensitivity, tomlin1995reduced, liu2004sensitivity, maurya2005reduced,jayachandran2014optimal}, optimization based methods \cite{maurya2005reduced, maurya2009mixed, hangos2013model,locke2005extension,anderson2011model,prescott2012guaranteed}, and lumping methods \cite{dano2006reduction,dokoumetzidis2009proper,koschorreck2007reduced,sunnaaker2011method}. 
Finally, an important class of model reduction methods are based on maximum entropy techniques
 for the closure of the equation that determines the time evolution of the probabilistic description of the stochastic reaction network (see for instance \cite{majda2005information, CONSTANTINO2017139, lee2009moment,gillespie2009moment,grima2012study}). 
In particular, the method proposed in this work can be considered as a combination of sensitivity and optimization-based methods, using an information theory approach. We refer to Section \ref{sec:comparison} for a discussion on the connections between our method and the current literature.

It is well-known that, when the population sizes of the biochemical system become small, a deterministic formulation of the dynamics is inadequate for understanding many important properties of the system (see for instance \cite{erdi, Wilkinson:12}). Therefore, our method is primarily concerned with stochastic, Markovian models of reaction networks.
In order to estimate distances between corresponding probability distributions for our stochastic models and  train models from data,   information metrics such as the relative entropy are natural choices, \cite{Cover:91}.
The relative entropy (Kullback-Leibler divergence) of two probability measures $P$ and $Q$ 
is given by 
\begin{equation}\label{relent:intro}
\re{P}{Q} = \expt{ \log \frac{dP}{dQ} }{{P}} \, ,
\end{equation}
where $\frac{dP}{dQ}$ is the Radon-Nikodym derivative for $P$ absolutely continuous 
with respect to $Q$ (see for instance \cite{Cover:91}).  
Entropy-based analytical tools have
proved essential for deriving rigorous model reductions of interacting particle
models to the so-called hydrodynamic limits, \cite{Landim}.

In a closely related direction, information metrics provide systematic, practical, and widely used 
tools to build  approximate statistical models of reduced complexity through  
variational inference methods \cite{MacKay:2003,bishop,stuartKL}  for  
machine learning \cite{Wainwright:2008,Blei:2013,bishop}, 
and coarse-graining of complex systems at equilibrium \cite{Shell2008,shell2010,NoidReview:2011,Bilionis:2012,zabaras2013,Foley}. However, dynamics are of critical importance in reaction networks and such earlier works on equilibrium coarse-graining  are not directly applicable.
Here we build on earlier work \cite{KP13, HKKP16, Kalligiannaki2016}, where coarse-graining methods for dynamics and non-equilibrium steady states were derived for molecular dynamics and Kinetic Monte Carlo methods. In particular, in order to address model reduction for dynamics it is essential to consider time series data that include temporal correlations, necessitating the use of information theory methods for probability distributions in path-space, i.e. the space of all possible time series. 

In this work we develop a method that, given a reaction network with path distribution $P_{0:T}$ , i.e., a probability over the set of all possible dynamics over a time interval, finds an element from a parameterized family of distributions that is close to $P_{0:T}$.
The novelty of this work is two-fold, (a) it extends the coarse-graining path-space information theory methods of \cite{KP13, HKKP16} to achieve model reductions of complex reaction networks, and  (b) obtains the most efficient model reduction in (a) by applying a path-space sensitivity analysis technique  presented in \cite{PK2013,PKV:2013}, thus identifying the most sensitive model parameters. 
More specifically,  in (a) we consider the path probability distribution $P_{0:T}$ of a high-dimensional reaction network, i.e., a probability over the set of all possible dynamics (time series)  on a time interval $[0,T]$; then we seek  an element  from a parameterized family of reduced models distributions that is closest to $P_{0:T}$ with respect to a loss function, which in this case is the relative entropy:
\begin{equation}
\label{eq:min_prob_intro}
\min _{\theta \in \Theta} \re{P_{0:T}}{Q^\theta_{0:T}} \, . 
\end{equation}
Here by $Q^\theta_{0:T}$ we denote the parameterized path probability distribution, where $\theta$ belongs to a certain parametric space $\Theta$. 
As we will show in this paper, $\re{P_{0:T}}{Q^\theta_{0:T}}$ in \eqref{eq:min_prob_intro} turns out to be a  parameter-dependent computable quantity that can be fitted (or trained) by means of the  time series data of the high-dimensional model $P_{0:T}$. 

Furthermore, in Step (b) in order to  determine sensitive model parameters, we use  
 the Hessian of the relative entropy $\re{P^c_{0:T}}{P^{c+\epsilon}_{0:T}}$  where the vector $c$ corresponds to the parameters of the 
 high-dimensional model $P_{0:T}=P^c_{0:T}$ and $\epsilon$ is any vector perturbation of $c$,  i.e., the pathwise Fisher Information Matrix (pFIM), \cite{DKPP16, PKV:2013}. This pFIM is block-diagonal and scales linearly with the number of parameters, thus is computationally an efficient tool for sensitivity analysis of reaction networks.  In turn, this sensitivity analysis identifies and ranks   sensitive model parameters, but crucially for the optimization in   \eqref{eq:min_prob_intro}, determines  a  family of reduced coarse variables and associated models  $Q^\theta_{0:T}$ by retaining only the corresponding reactions and species to just the sensitive parameters. In this fashion, step (b)  enables  step (a) in order to obtain the best-fit reduced model  by optimizing the loss function in  \eqref{eq:min_prob_intro},  over only the sensitive parameters $\theta$ and subsequently the proper family of reduced models $Q^\theta_{0:T}$. 
Overall,  the path-space model reduction  methodology introduced earlier in 
\cite{KP13, HKKP16}
is here augmented to include the sensitivity analysis and uncertainty quantification,  information-theoretic tools described earlier  that allow us to identify the proper coarse variables, a long-standing open question in the  coarse-graining literature. This  latter was made feasible here, both due to the use of the pFIM and the inherent sparse structure of reaction networks.

The resulting method provides  a simple, efficient and principled model reduction method targeted to high-\! dimensional deterministic and stochastic reaction networks. This method enables significant model reductions in the number of variables, reactions and parameters, while preserving the original dynamics, especially in the case of models in ``sloppy'' regimes, i.e., when most of the information contained in the pFIM is accumulated in a reduced number of parameters, for a given time interval. This reduction method  construct models in such a way that the information contained in the reduced model, relative to the full model, satisfies a user defined information threshold. Furthermore, the quality of a given reduction is assessed by comparing suitable distances between the mean-field of the full and the reduced models, and aim to satisfy a user defined error tolerance.
One of the main advantages of our reduction method is its feasibility in terms of computational work. From the algorithmic point of view, to construct the reduced model in step (b) using the pFIM (not including the minimization step \eqref{eq:min_prob_intro}) a linear amount of computational work on the number of parameters plus the number of state variables is usually required (per timestep). As a comparison, note that the work required for classical sensitivity analysis is of the order of the number of parameters times the number of state variables. In a nutshell, our method can be visually described as in Figure \ref{fig:method}.

We demonstrate the efficiency of our method by reducing three well-known models present in the biochemical literature: A protein homeostasis network in a sloppy regime from \cite{proctor11}, a Epidermal Growth Factor Receptor (EGFR) model from \cite{Kholodenko99}, a mammalian circadian clock model from \cite{Leloup03}, and finally two stochastic pure jump models: a 
 growth factor receptor derived from \cite{Kholodenko99}, and a circadian clock model derived from \cite{Leloup03}. In the first case, we consider a particular regime studied by the authors, in which it is possible to obtain a substantial reduction both in the number of state variables and in the number of parameters and reaction channels. In the second example we analyze a non-sloppy model of signalling phenomena, i.e. has two different regimes in time, and still able to obtain significant reductions. In the third example, we reduce a model whose dynamics are dominated by non-trivial oscillatory behaviour. Finally, both stochastic models are reduced to obtain remarkable agreements even in the presence of non-Gaussian distributions. In all the examples, even though the models were carefully designed by researchers, significant model reductions are achieved. 

One of the key elements  of our approach is that we combine two different modeling philosophies: the physically based behavior description of a biochemical system, and the data-based approach, i.e., the analysis of time series data. The physics is modeled by using the reaction network formalism, and in particular we focus on the stoichiometry matrix, which describes the relation between state variables and reaction paths of the process and does not depend on the type of the model (discrete or continuous, deterministic or
stochastic). The physical modeling point of view is then emphasized in the selection of the most important reactions and state variables and the construction of the reduced stoichiometry. The data-based approach is emphasized in the data fit step i.e. the minimization problem \eqref{eq:min_prob_intro} via a suitable simplified loss function that is derived in Section \ref{sec:loss_fct}.
Three main challenges were considered in the reduction method presented in this work. The first one, is to preserve the structure and therefore the physical model of the original network. This means not only to incorporate the reaction network formalism, but in particular to exploit its stoichiometry in order to identify the most relevant variables, parameters and reaction channels. This effectively reduces data requirements in comparison to alternative machine learning techniques that do not take into account the scientific domain knowledge and therefore substantially accelerates identification and training steps. In particular, our method allows to optimize the parameter vector by considering the drift associated to the reduced model instead of the full model, dramatically reducing the complexity of the optimization step (see Theorem 4.1).
Second challenge was to preserve physicochemical understanding of the reduced model, which is of crucial importance in mathematical modeling. User confidence in model predictions is directly linked to the conviction that the model accounts for the correct variables, parameters, and applicable physicochemical laws. The goal of obtaining understandable models which are readily interpretable becomes significantly more difficult in the high-dimensional case. Computational intensive and involved model reduction techniques may achieve more parsimonious reductions, but the lack of interpretability makes them insufficient specifically in the high-dimensional case. 
Finally, we aimed to obtain a method able to provide, in a robust manner, reduced models within a prescribed accuracy and information loss, requiring the least amount of computational work. The iterative nature of our approach, allows to subsequently construct improved reduced models by increasing the number of variables, parameters and reactions channels. Then, a hierarchy of reduced models for model selection is obtained. Even though our method uses the stoichiometry of the network to ontain the reduced model, choose sensitive species, further structural and chemical information may be extracted from it. In that respect, our method can be considered as a knowledge-domain-aware scientific machine learning technique that makes use of well established machine learning methods such as variational and approximate inference.

The paper is structured as follows. In the next section we present main notations and assumptions regarding reaction networks. In Section \ref{sec:inf_methods} we review main information-based tools used in the variational model reduction approach for stochastic systems, such as the pathwise Fisher information matrix, scalable information-based sensitivity indexes and its mean-field approximation. In Section \ref{sec:var_inf} we recap the variational inference idea applied to coarse-graining and we present the core of the contribution contained in this work: a scalable loss function for data training. In Section \ref{sec:MR} we present our information-based reduction method for high dimensional reaction networks. This reduction method is driven by the pathwise Fisher information matrix to find the most sensitive parameters and reaction channels, and then uses the stoichiometry of the network to build a family of parameterized models to finally use the loss function to fit the resulting model to time series data. In sections \ref{sec:exp} and \ref{sec:stoc_exp} we apply our method to three models taken from the literature and two proposed stochastic pure jump models obtaining relevant reductions in all of them.
In Section \ref{sec:comparison} we discuss closely related state of the art on model reduction for biochemical reaction networks.
We finally draw main conclusions of this work in Section \ref{sec:conc}. 

\section{Models for reaction networks}
\label{sec:RNs}
The purpose of this section is to present main notations and assumptions regarding reaction networks.
A Reaction Network is a dynamical system whose state can be described by a $d$-dimensional time dependent vector, denoted by $X(t)=(X_1(t),X_2(t),...,X_d(t))\in \latt$, and a finite number, $J$, of possible transitions of the system. Each transition is a pair $(\nu_j,a_j)$, $j{=}1,2,...,J$, formed by a $d$-dimensional state-change vector $\nu_j$ and a non-negative function $a_j=a_j(x;c)$ that depends on the state of the system, $x \in \latt$, and a $K$-dimensional vector of model parameters $c \in \mathcal{C}$. Each state-change vector describes the effect of the transition on the state of the system. If the state of the system at time $t$ is $X(t){=}x$ and the $j$-th transition is taken, then the next state is $x{+}\nu_j$. The function $a_j$ models the rate at which the $j$-th reaction occurs. The matrix $\nu$ is defined as the matrix whose $j$-column is $\nu_j$, and $a$ is the column vector whose components are $a_j$. 
In this work, we assume $\nu_j \in \mathbb{Z}^d$, $\latt = \mathbb{R}^d$, and $\mathcal{C}=\mathbb{R}^{K}$. More importantly, we focus on high-dimensional systems, i.e., $J{\gg} 1$, $d{\gg} 1$, and $K{\gg} 1$.

In biochemical reaction networks, $\nu_j$ is called stoichiometric vector, $a_j$ is called propensity function, the pair $(\nu_j,a_j)$ is called reaction channel, and $X(t)$ accounts for the population of $d$ interacting species $(S_1,S_2,...,S_d)$. The stoichiometry of each reaction channel can be represented as
$$
\nu_{in,1}S_1+\nu_{in,2} S_2 +\ldots + \nu_{in,d}S_d \rightarrow \nu_{out,1}S_1+\nu_{out,2}S_2+\ldots +\nu_{out,d}S_d
$$
where $\nu_{in,i}, \nu_{out,i} \in \mathbb{N}$ for $i{=}1,2,...,d$. The left hand side of the arrow represents the species that take part as the input in the reaction (called reactants), and the right hand side represents the species that take part as the output (called products). We then have $\nu = \nu_{out}-\nu_{in}$. The stoichiometry models the law of conservation of mass where the total mass of the reactants equals the total mass of the products. Stoichiometry describes the static, algebraic structure of the network of reactions. It can be considered as the framework within each chemical motion take place.
Here we focus on the stoichiometry of the network to construct a reduced model.
Having the full stoichiometry information allows to identify when a reactant is consumed in a reaction, and distinguish it from a catalytic reactant, which is not consumed.  In some cases, the stoichiometry determines the form of the propensity function, e.g. in the case of the law of mass action, which is the proposition that the rate of a chemical reaction is directly proportional to the product of the activities or concentrations of the reactants. 
In this work, we assume that, for every reaction channel, vectors $\nu_{in}$ and $\nu_{out}$ are sparse. That is, $|\{i: \nu_{in,i}\neq 0\}| \ll d$ and $|\{i: \nu_{out,i}\neq 0\}| \ll d$, for $j{=}1,2,...,J$. This is typically the case in biochemical reaction networks.

Each reaction channel depends on a vector of model parameters. 
Here we denote with $\varphi$ the function that maps, for each parameter, the set of reaction channels that explicitly depend on that parameter, 
\begin{equation}
\label{eq:varphi}
\varphi: \{1,2,\ldots,K\} \rightarrow \wp(\{1,2,\ldots,J\})\, ,
\end{equation}
where $\wp(A)$ denotes the set of all possible subsets of $A$. For example, in the linear parameter dependent case, the map $\varphi$ is the identity that maps the index $k$ to the singleton $\{k\}$.
In this work, we assume that the reaction channels depend on a reduced number of parameters, that is, for a given propensity function $a_j$, we have
\begin{equation}
\label{eq:sp_pars}
a_j(x;c)=a_j(x;c_{k_1},c_{k_2},...,c_{k_M}) \, ,
\end{equation}
where $k_1,...,k_M \in \{1,2,...,K\}$ and $M {\ll} K$ for $j{=}1,2,...,J$. This is typically the case in biochemical reaction networks. 

Next, we discuss stochastic and deterministic models for RNs. The stochastic models are of two types: discrete state, called Stochastic Reaction Networks (SRNs), and continuous state, usually called the Chemical Langevin equation (CLE). The first one is a counting stochastic process, and usually models the number of particles of different species interacting in a fixed volume, while the second is a diffusion process that models concentrations instead of number of particles. The deterministic model is usually called reaction-rate system (of differential equations) and models the average concentration of the species. By means of a proper renormalization of the variables by the size of the volume in which the species interact, it is possible to obtain a relation between the stochastic intensity and the deterministic reaction rate. In this work we assume the scalings are properly done, whenever it may be needed (see \cite{pedrodiss} and references therein).

\subsection{Stochastic reaction networks}
Stochastic Reaction Networks (SRNs) are a class of continuous-time Markov
chains that describe the stochastic evolution of a system of $d$ interacting species, where
$X:\mathbb{R}_+ \times \Omega \to \mathbb{N}^d$ models the number of particles of each species present in the system at time $t$. 

The probability that reaction $j$ occurs during an infinitesimal interval
$(t,t+\Delta t]$, when the state of the system is $X(t)=x$, is given by
\begin{align}\label{eq:trProba}
 \prob{X(t+ \Delta t)=x+ \nu_j \bigm| X(t)=x}=a_j(x;\cdot)\Delta t + o(\Delta t) \, .
\end{align}

We assume $a_j(x;\cdot){=}0$ for $x$ such that $x{+}\nu_j \notin \latt$, that is, the process never leaves the domain $\latt$. This is sometimes not the case in some biochemical models used in practice, but it can be suitably enforced by means of smooth indicator functions.
The total propensity (or rate) $a_0(x;c):=\sum_j a_j(x;c)$ is in fact the  waiting time for the process departing from state $x$. 
SRNs can be characterized by the following representation \cite{kurtzmp} 
\begin{equation}
\label{eq:kurtz}
X(t)= x_{0}+\sum_{j=1}^{J} Y_j \left( \int_0^t  a_{j}(X(s)) \, d s \right) \nu_j  \, ,
\end{equation}
where $Y_j:\mathbb{R}_+{\times} \Omega \to\latt$ are independent unit-rate Poisson
processes. 
A typical feature of biochemical systems is that the modeled reaction network
 has thousands of species and/or reaction channels together with  different time scales coming
from the orders of magnitude disparity between the propensity of each reaction channel, making an exact simulation of a SRN unfeasible from the computational point of view. 
The tau-leap method \cite{Gillespie:01} approximates \eqref{eq:kurtz} by applying a forward Euler discretization, that is, sampling a batch of events by means of a Poisson random variable at each time-increment. Although several improvements of
the basic tau-leap algorithm have been proposed, methods for high-dimensional systems are still under active research \cite{Cao:06,Tian:04, Chatterjee:05, chernoff, chernoffML}.

\subsection{Chemical Langevin equation and the mean-field equation}
A number of approximations to the pure jump process \eqref{eq:trProba} have been developed in order to reduce the computational work required to sample a trajectory of the system. For example, the reaction-rate ODE system (or mean-field) approximation ignores the stochastic fluctuations and yields a
deterministic system that approximates the mean populations of the species \cite{Gardiner:85,
vanKampen:06}. Stochastic counterparts such as the
Chemical Langevin equation \cite{Gillespie:00} or the linear noise approximation \cite{Kurtz:72} can be applied in order
to improve the accuracy of the simulation. 
%
The reaction-rate ODE system (or mean-field) can be formally obtained by linearizing the infinitesimal generator of the SRN, to obtain
\begin{align}%
\label{eq:rrODE}
\left\{ \!
\begin{array}{l@{\;}c@{\;}l}
dz(t) &=& \nu a(z(t);c) dt , \quad t \in \mathbb{R}_+ \\
z(0) &=& x_0
\end{array}
\right. \, .
\end{align}
A second order expansion gives the so called chemical Langevin (CLE)  approximation 
\begin{align}%
\label{eq:lang}
\left\{ \!
\begin{array}{l@{\;}c@{\;}l}
dY(t) &=& \nu a(Y(t);c)dt +  \nu \sqrt{\diag(a(Y(t);c))}dW(t), \quad t \in \mathbb{R}_+ \\
Y(0) &=& x_0
\end{array}
\right. \, ,
\end{align}
where $Y(t)$ is a diffusion process, $W(t)$ is a $\mathbb{R}^J$-valued Wiener process with independent components, and $\diag(v)$ of a vector $v$ is a square matrix whose diagonal is $v$ and zero everywhere else (see \cite{pedrodiss} and references therein).

\emph{Euler discretization of the CLE.}
Usually, specialized solvers are used by the modeller to obtain time series data of a complex reaction network by running suitable software like CHEMKIN \cite{kee1980chemkin} or TChem \cite{safta2011tchem}. 
In this work, we aim to obtain model reductions by using available data. For this reason, and without losing any methodological generality, we focus on describing the dynamics of the reaction network by means of the following Euler discretization of the CLE \eqref{eq:lang} for a time-step of size $\Dt$
\begin{equation}
\label{eq:ecle}
x_{k+1} = x_k + b(x_k) \Dt + \sigma(x_k) \Delta W_k \, ,
\end{equation}
where 
$$b(x) := \nu a(x;c) \quad \text{ and } \quad  \sigma(x) := \nu \sqrt{\diag(a(x;c))} \, ,
$$
with $\Delta W_k \sim \mathcal{N}(0,\Dt I)$ independent Gaussian increments. The density of its transition probability, given $x \in \mathbb{R}^d$ to $x' \in \mathbb{R}^d$, is a multivariate Gaussian random variable that depends on $\Dt$ and can be written
\begin{equation}
\label{eq:microp}
p(x,x') = \frac{1}{Z_\Dt(x)} \exp \left\{ -\frac{1}{2\Dt}(x'-m_\Dt(x))^{tr} \Sigma^{-1}(x)(x'-m_\Dt(x)) \right\} \, ,
\end{equation}
where $m_\Dt(x) := x-b(x) \Dt$, $Z_\Dt(x) := \sqrt{(2 \pi \Dt)^d \det(\Sigma(x))}$ and $\Sigma(x) := \sigma(x) \sigma^{tr}(x)$. 

This Euler discretization of the CLE allows to obtain the following time series 
\begin{equation}
\label{eq:data}
(x_0,x_1,x_2,...,x_T) \, , \quad \text{ where } x_i{=}(x_{i,1},x_{i,2},...,x_{i,d}) \text{ for } i{=}0,1,...,T \, ,
\end{equation}
for $(\Dt_i)_{i=1}^T$ on the time interval $[0,T]$. Notice that similar time series can also be obtained by sampling the representation \eqref{eq:kurtz} or by the numerical integration of the reaction-rate ODEs \eqref{eq:rrODE}. In this work, we do not consider measurement errors.

\section{Path-space information methods in discrete time}
\label{sec:inf_methods}
The purpose of this section is to review main information-based tools used in the variational model reduction approach for stochastic systems introduced in  \cite{HKKP16,KP13}.

\subsection{Information-based model reduction}
In this section we briefly recap the main idea of information-based model reduction.
Given a reaction network with path distribution denoted by $P_{0:T}$ (the full model), we aim to find an element from a parameterized family of distributions that is close to this original distribution in a suitable distance. Let $Q^\theta_{0:T}$ denote the parameterized path distribution, where $\theta$ belongs to a certain parametric space $\Theta$.  
Using the relative entropy as the distance, we aim to solve
$$
\min _{\theta \in \Theta} \re{P_{0:T}}{Q^\theta_{0:T}} \, ,
$$ 
to find the optimal representation of $P_{0:T}$ in terms of $Q^\theta_{0:T}$. We notice that $P_{0:T}$ also depends on a model parameter vector, i.e., $P^c_{0:T}$, but for simplicity we avoid this notation here. Recall that the relative entropy is not symmetric so we may be also interested in the problem 
$$
\min _{\theta \in \Theta} \re{Q^\theta_{0:T}}{P_{0:T}} \, ,
$$
typically addressed in the variational inference literature, [REF7].
Here we choose to work with the former one, primarily due to the relative simplicity  of implementation.
The relative entropy between the two path distributions $P_{0:T}$ and $Q^\theta_{0:T}$ (or Kullback-Leibler divergence), on the same measurable space, is given by
$$
\re{P_{0:T}}{Q^\theta_{0:T}} = \expt{\log \frac{dP_{0:T}}{dQ^\theta_{0:T}}}{P_{0:T}}\, ,
$$
provided $P_{0:T}$ is absolutely continuous with respect to $Q^\theta_{0:T}$. From an information theory perspective, the relative entropy  quantifies the loss of information when $Q^\theta_{0:T}$ is used instead of $P_{0:T}$.  One  notable analytical advantage of the relative entropy is that it 
reduces to minimizing the expectation of a single distinguished observable given by 
$\log \frac{dP_{0:T}}{dQ^\theta_{0:T}}$.
We refer to \cite{PK2013,PKV:2013,DKPP16} and references therein for additional details, in particular for relative entropy for discrete time Markov Chains.
%
In our setting of model reduction (or coarse-graining), the path distribution $P_{0:T}$ is associated with the original model (which will be mapped to the coarse space in order to be able to compute the relative entropy) and $Q^\theta_{0:T}$ is associated with the approximating reduced model. The parameterized family of distributions $Q^\theta_{0:T}$ is considered in order to construct the best approximation to the original distribution $P_{0:T}$. This approximation is then fitted using entropy-based criteria over a set of data coming from the model with  distribution $P_{0:T}$, to find the best possible Markovian approximation of the projected original model. By training this single functional given by the relative entropy instead of an observable-based quantity of interest, we obtain a reliable parameterization that gives rise to transferability properties applicable to any observable. 

\subsection{Pathwise relative entropy}
\label{sec:time_series}
Here we present the pathwise distribution of the original model, $P_{0:T}$, and the parameterized one, $Q^\theta_{0:T}$, for a discrete-time Markovian time-homogeneous process that generates the time series $(x_i)_{i=0}^T$ (see \eqref{eq:data}).
Let $p(x,x')$ denote its transition probability function for $x,x'\in \latt$.
In virtue of the Markov property, the path space probability distribution $P_{0:T}$ for the time series $\{x_i\}_{i=0}^T$, starting from the distribution 
 $\INIT(x)$, is given by
\begin{equation}
\label{eq:pdist_CLE}
P_{0:T}\big(x_0,\ldots, x_T \big) = \nu(x_0) p(x_0,x_1) \ldots  p(x_{T-1},x_T)\, .
\end{equation}
Then the probability density function at the time instant $i$ is denoted as $\nu_{i}(x)$ and given by 
\begin{equation*}
\nu_{i}(x) = \int_\latt\cdots\int_\latt \nu(x_0)p(x_0,x_1)\ldots p(x_{i-1},x) dx_0\ldots dx_{i-1} \, , \quad i{=}1,2,...,T \, ,
\end{equation*}
where $\nu_0(x):= \nu(x)$.

In the same line, consider a parameterized  transition probability
density function $q^{\theta}(x,x')$, which depends on the parameter vector $\theta \in \Theta$ for $x, x' \in \latt$. Its path space probability distribution, starting from $ \nu^\theta(x)$, is given by
\begin{equation}
\label{eq:ppdist_CLE}
Q^\theta_{0:T}\big(x_0,\ldots, x_T \big) =  \nu^\theta(x_0) q^\theta(x_0,x_1) \ldots  q^\theta(x_{T-1},x_T)\, .
\end{equation}
The pathwise relative entropy can be decomposed as (see Appendix)
\begin{equation}
\re{P_{0:T}}{Q^\theta_{0:T}}= \re{\INIT}{\INITAPP} + \sum_{i=1}^T \re{P_i}{Q^\theta_i} \ ,
\label{eq:pRE}
\end{equation}
where the following quantity
\begin{equation}
\re{P_i}{Q^\theta_i}
= \expt{\int_\latt p(x,x')\log \frac{p(x,x')}{q^{\theta}(x,x')}dx'}{\nu_{i-1}} \, ,
\label{eq:iRE}
\end{equation}
can be interpreted as the ``instantaneous relative entropy''. 

\subsection{Pathwise Fisher information matrix of a parameterized distribution}
\label{sec:pFIM}
In this work, we employ fast parametric screening with controlled accuracy to detect and discard (eliminate or fix) parameters considered insensitive. This screening step is 
based on sensitivity indexes that can be bounded by the pathwise Fisher information matrix (pFIM),
which is presented in this section. We stress that in the high-dimensional RN case, fast screening is
especially important due to the large number of parameters that increase by orders of magnitude the computational work when compared to the simulation of the model. We refer to \cite{PKV:2013, PLOS:AKP} for further details.

Consider a vector $\epsilon \in \mathbb{R}^K$ such that $c{+}\epsilon$ is a small perturbation of the model parameter vector $c$ with non-negative components. 
Assume that the instantaneous pathwise relative entropy $\re{P^{c}_i}{P^{c+\epsilon}_i}$ is smooth with respect to $c$. Then, in combination with its non-negativity property, it can be Taylor-expanded around $c$ (see Theorem \ref{thm:pFIM} in the Appendix) to obtain  
\begin{equation}
\label{eq:RE_FIM}
\re{P^c_i}{P^{c+\epsilon}_i}= \frac{1}{2} \epsilon^{tr} \FISHERR{P^c_i} \epsilon + O(|\epsilon|^3) \, ,
\end{equation}
where
\begin{equation}
\label{eq:iFIM}
\FISHERR{P^c_i} =  \expt{\int_\latt p^c(x,x') \nabla_c \log p^c(x,x') \nabla_c \log p^c(x,x')^{tr} dx'}{\nu_{i-1}^c} \, 
\end{equation}
is the instantaneous Fisher information matrix associated to the instantaneous relative entropy. 
The Fisher information is a measure of the amount of information that a random variable contains regarding a set of parameters. An appealing property is that the FIM is independent of the perturbation vector, $\epsilon$, and contains up to third order accuracy the sensitivity information as it is quantified by
the relative entropy. 
Consequently, the pathwise FIM, i.e., the Hessian of the
pathwise relative entropy at $c$ is given by
\begin{equation}
\label{eq:pFIM}
\FISHER{P^c_{0:T}} = \FISHER{\nu^c} + \sum_{i=1}^T  \FISHERR{P^c_i} \ ,
\end{equation}
where $\FISHER{\nu^c} = \expt{\nabla_c \log\nu^c(x)\nabla_c \log\nu^c(x)^{tr}}{\nu^c}$
is the FIM of the initial distribution.

\emph{Block diagonal structure of the pFIM.} 
As previously mentioned, reaction channels of biochemical reaction networks typically depend on a reduced number of parameters, which determine a block diagonal structure of the pFIM. This parametric dependence allows to reduce the computational work of the quantities to estimate from $O(K^2)$ to $O(K)$. This reduction may be essential in the  high-dimensional case. In Figure \ref{fig:sp_pFIM} we show two examples of the block structure of the pFIM.

\begin{figure}[h!]
\centering
\begin{minipage}{0.3\hsize}
	\includegraphics[width=\textwidth]{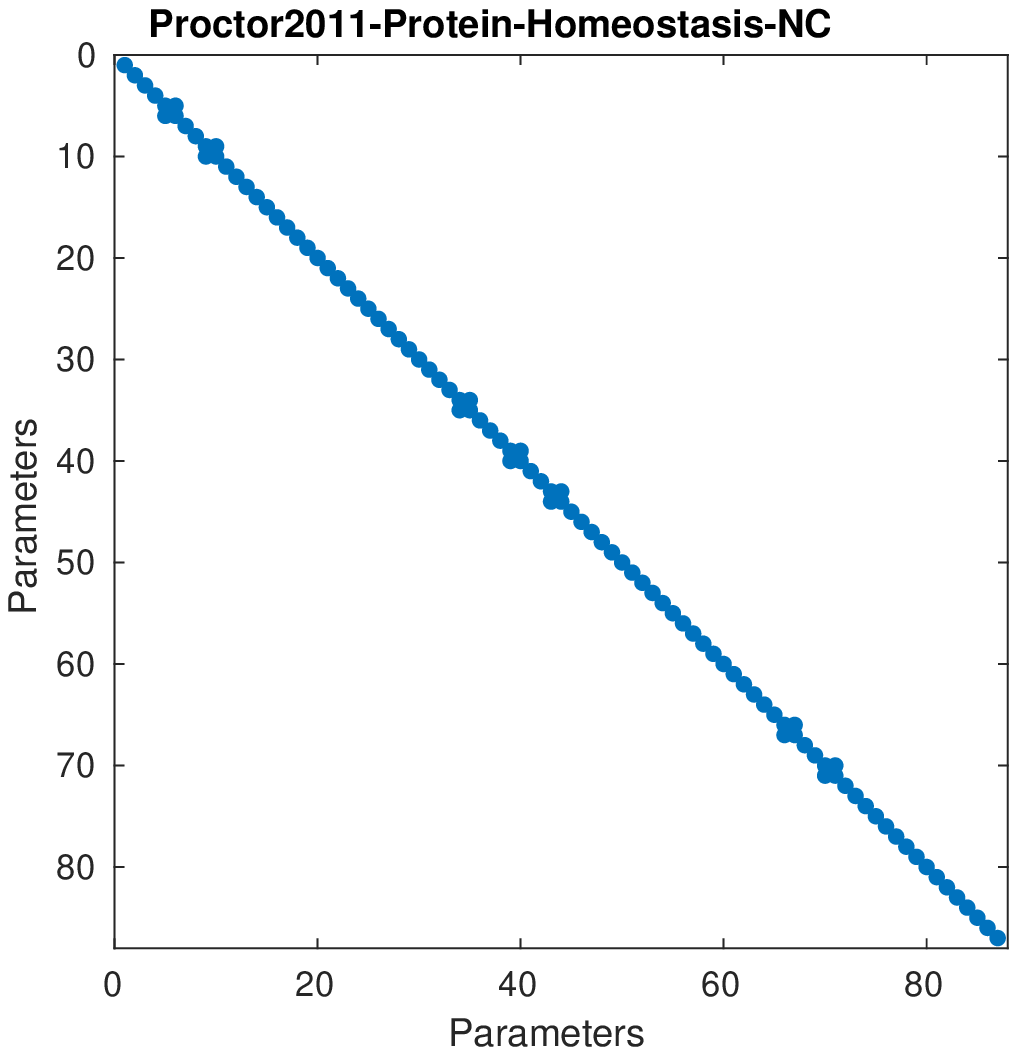}
\end{minipage}
\hfill
\begin{minipage}{0.3\hsize}
	\includegraphics[width=\textwidth]{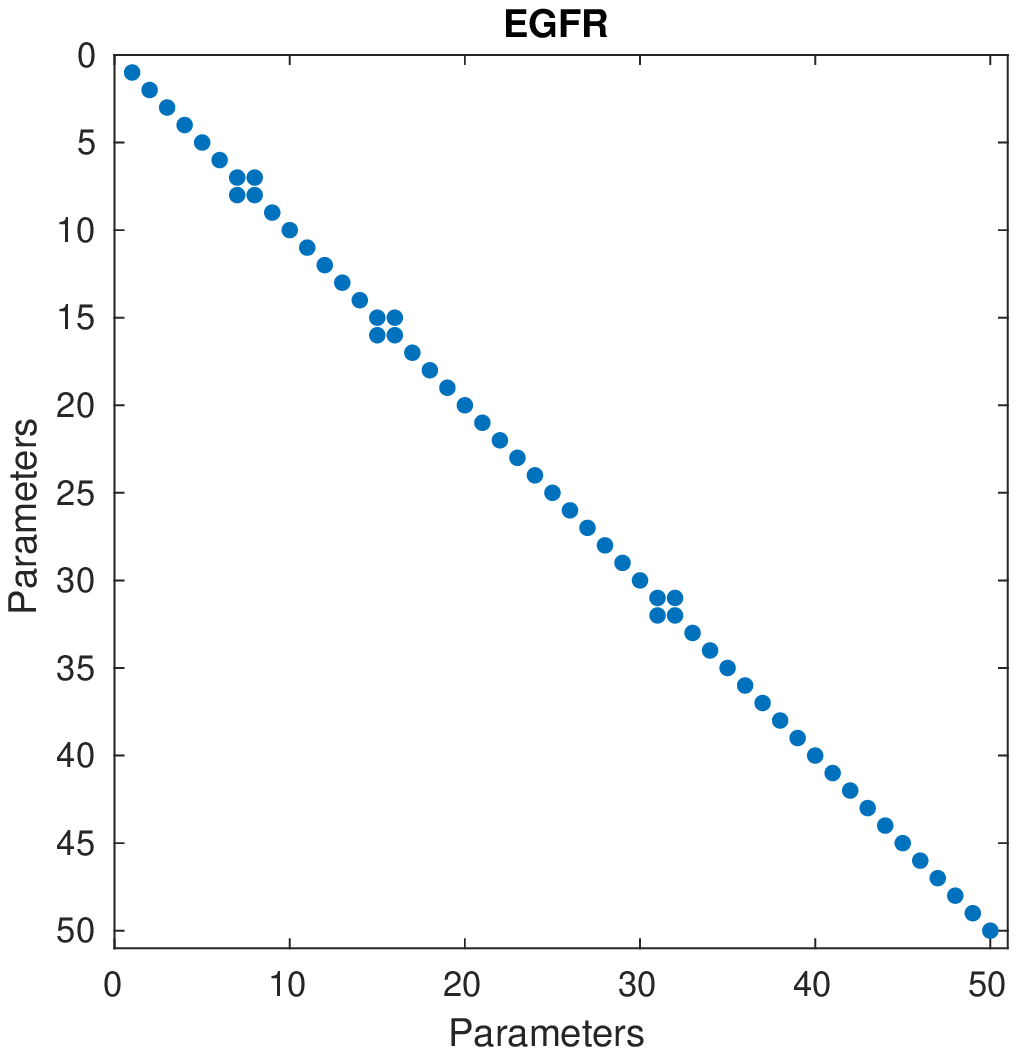}
\end{minipage}
\hfill
\begin{minipage}{0.3\hsize}
	\includegraphics[width=\textwidth]{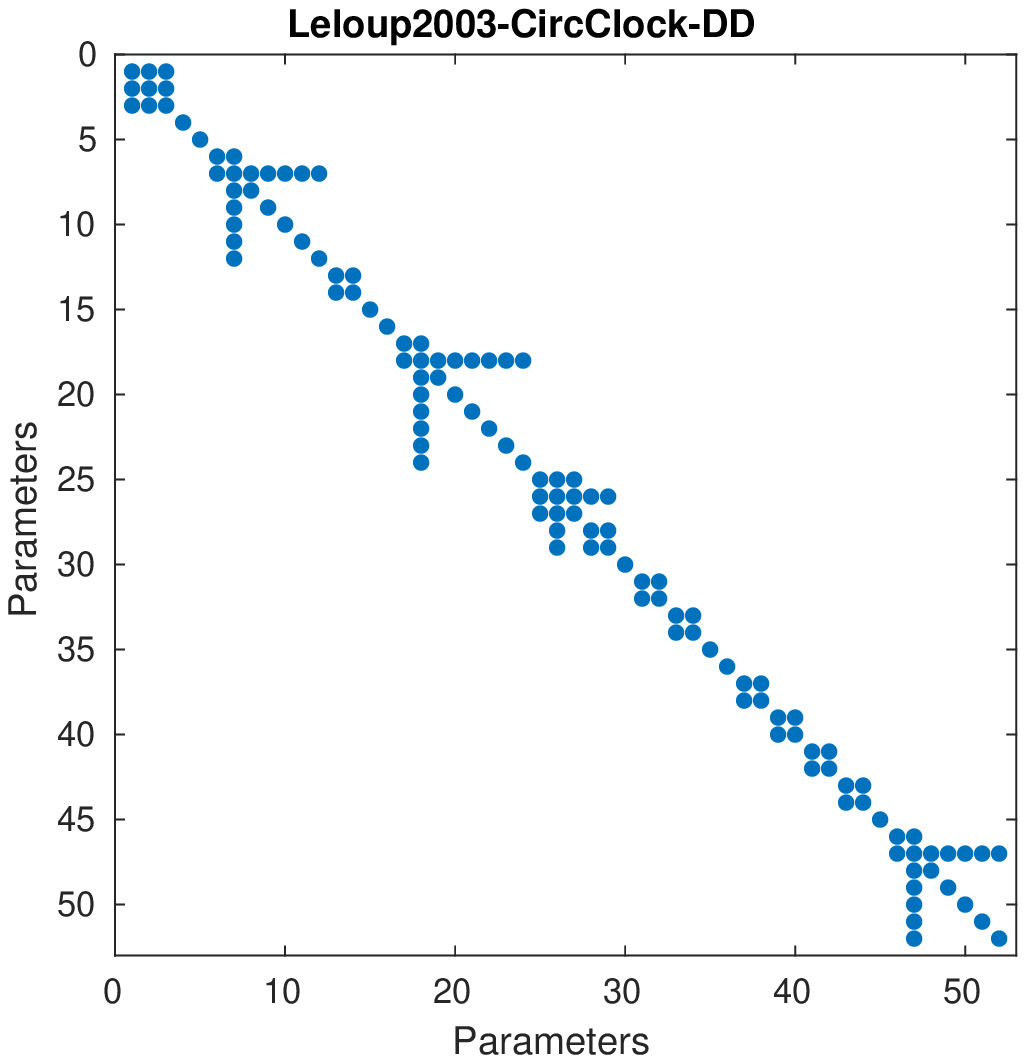}
\end{minipage}
\caption{\label{fig:sp_pFIM} Pathwise FIM block structure in the three models presented in Section \ref{sec:exp}. Protein Homeostasis model \cite{proctor11}, Epidermal Growth Factor Receptor (EGFR) model \cite{Kholodenko99} and Mammalian Circadian Clock model \cite{Leloup03}. We notice that the sparsity of the pathwise FIM allows to reduce the computational work from $O(K^2)$ to $O(K)$.}
\end{figure}

%

\begin{rem}[Fisher information for model reduction]
Fisher information matrices has been used before in the context of model reduction for reaction networks  (see \cite{III201501} and references therein). However, these information matrices are different from our pathwise FIM. In \cite{III201501} the matrix is computed based on the adjoint system \eqref{eq:adjoint}, so the computational work issue still remains. In \cite{chu2008parameter} the authors assume that the quantities of interest (measurements) are affected by identically distributed and independent Gaussian noise. Then, the Fisher information matrix of these independent measurements is computed. Similar comments apply to \cite{cintron2009sensitivity,yue2006insights}. We stress here that our pFIM is computed by taking into account the Markovian dynamics of the process and therefore, taking into account its intrinsic noise (see \eqref{eq:pFIM}).
\end{rem}

\begin{rem}[Pathwise FIM for Stochastic Reaction Networks]
The pFIM over an interval $[0,T]$ is given by
$$
\pFIM{P_{[0,T]}^c} = \pFIM{\nu^c} + \int_{0}^{T} \pFIMH{P_{t}^c} dt \, ,
$$
where $\pFIM{\nu^c}$ is the FIM of the initial distribution and the process $P^c_t$ can be viewed as the instantaneous pFIM given by
\begin{equation}
\label{eq:pFIM_SRN}
\pFIMH{P_{t}^c} = \expt{\sum_{j=1}^J a_j(X_{t-};c) \nabla_c \log a_j(X_{t-};c) \nabla_c \log a(X_{t-};c)^{tr}}{P^c_{[0,t]}} \, ,
\end{equation}
where $X_{t-}$ denotes the left side limit at time $t$.
The transition probabilities for the embedded discrete time Markov chain defined as $(Z_n)_{n \in \mathbb{N}}:=X(t_n)$, where $t_n$ is the time of the $n$-th jump, are given by
\begin{equation}
\label{eq:emb}
p(x,x{+}\nu_j;c):=\frac{a_j(x;c)}{a_0(x;c)}\, , \quad  j{=}1,2,...,J \,  ,
\end{equation}
for $X(t_n){=}x \rightarrow X(t_{n+1}){=}x{+}\nu_j$, assuming $a_0(x;c){>}0$.
\end{rem}

\begin{rem}
\label{rem:log}
In many applications, model parameters differ by
several orders of magnitude, so we perform relative parameter perturbations. This is done by perturbing the logarithm of the parameters. Using the chain rule on $\nabla_{\log c} p^c$ we obtain 
\begin{equation}
\left(\pFIM{P_{0:T}^{\log c}}\right)_{k,l} =  c_k c_l \left( \pFIM{P_{0:T}^c} \right)_{k,l}\, , \quad k,l=1,...,K \ .
\end{equation}

Note that \eqref{eq:RE_FIM} continues to be valid for the logarithmic scale. From now, we use relative parameter perturbations, i.e., we use $\pFIM{P_{0:T}^c}$ to denote $\pFIM{P_{0:T}^{\log c}}$.
\end{rem}

\subsection{Scalable information-based sensitivity indexes}
\label{sec:SB}
In this section we discuss a scalable information-based inequality that connects observables of the system with the pFIM, and we present a reliable sample-free implementation. 

Relative entropy provides a rigorous and computationally tractable  methodology for parameter sensitivity analysis of complex, stochastic dynamical systems focusing on the sensitivity of the entire probability
distribution.
However, in most simulations of biochemical networks, the main interest are 
observables such as population means, variances or time averages, as well as autocorrelations or extinction times. Therefore, it is
reasonable to connect parameter sensitivities with observables.
Given a pathwise distribution $P^c_{0:T}$, let 
\begin{equation}
D_{F,k} := \frac{\partial \expt{F}{P^c_{0:T}}}{\partial c_k} \, ,
\end{equation}
be the classical local sensitivity index, then the following information-based bound
\begin{equation}
\label{eq:SBo}
\abs{D_{F,k}} \leq \sqrt{\var{F}{P^c_{0:T}}}\sqrt{(\pFIM{P^c_{0:T}})_{k,k}} \, ,
\end{equation}
can be obtained by rearranging the generalized Cramer-Rao bound for estimators of the form $$F((x_i)_{i=1}^T)=\frac{1}{T}\sum_{i=1}^T f(x_i) \Dt_i \, ,$$ with $f$ a suitable observable. Here $(\pFIM{P^c_{0:T}})_{k,k}$ denotes the $k$-th diagonal element of the pathwise Fisher Information Matrix (pFIM).
For a given observable, this inequality can be used as an indicator that allows to classify --even in the presence of a very high-dimensional parameter space-- insensitive parameters, in the sense that a small pFIM diagonal value suggests relatively low SIs. In the same way, large pFIM values suggest high SIs. However, notice that large or small pFIM values does not imply sensitive or insensitive parameters respectively. We refer to \cite{PKV:2013,PLOS:AKP,DKPP16} for further details.
In this work, we focus instead on the following normalized local sensitivity index
\begin{equation}
\mathcal{D}_{F,k}:= \frac{\frac{\partial \expt{F}{P^c_{0:T}}}{\partial c_k}}{\sqrt{\var{F}{P^c_{0:T}}}} \, ,
\end{equation}
which has the advantage of capturing how ``noisy'' each observable is, and therefore, weights the sensitivity accordingly, meaning that perturbations in the expected value of $F$ are less significant when its variance is large. Then we readily have the sensitivity bound
\begin{equation}
\label{eq:SB}
\abs{\mathcal{D}_{F,k}} \leq \sqrt{(\pFIM{P^c_{0:T}})_{k,k}} \, .
\end{equation}
In this work, since we focus on model reduction, we mainly consider the observable $f(x)=x$, that is, $F$ is the time average of the state variables (species)
\begin{equation}
\label{eq:time_avg}
F((x_i)_{i=1}^T):=\frac{1}{T}\sum_{i=1}^T x_i \Dt_i \, .
\end{equation} 

Despite of this, further observables may be considered in the last step of our method, in which the reduced model may be augmented with additional parameters depending on particular quantities of interest. 

\subsection{Mean-field estimation of the pFIM}
\label{sec:MF_approx}
Here we discuss a classical mean-field type of approximation to avoid sampling of \eqref{eq:iFIM}. 
For such cases in which sampling is not feasible, either due the high dimension of the system or due to the fact that the available data is limited, a deterministic approximation is usually the only alternative. The linear noise approximation can be used to 
efficiently compute the pFIM, while maintaining controlled bias
in the statistical estimators. First notice that, under certain conditions and properly scaling the state variables, the CLE can be  written as
\begin{equation}\label{eq:approx_MFE}
Y(t) = z(t) + \eta \xi(t)
\end{equation}
where $z(t)$ is the deterministic mean-field part that satisfies \eqref{eq:rrODE}, $\xi(t)$ is a zero-mean external noise process and  $\eta$ is the amplitude of this stochastic term, which is proportional to the inverse square root of the reactant
populations \cite{Kurtz:72, Kurtz:81, Gillespie:00,vanKampen:06,PKV:2013}. Thus, for large populations, the dominant part of the stochastic process is the deterministic term
whose dynamics are governed by the ODE system \eqref{eq:rrODE}.
Assuming the process starts at a fixed value, the diagonal elements of the pFIM \eqref{eq:pFIM_SRN} are approximated using
\VIZ{eq:approx_MFE}
to get
\begin{equation}
\begin{aligned}
(\pFIM{P^c_{0:T}})_{k,k} & \approx \sum_{i=1}^T \sum_{j=1}^J a_j(z_i;c) \left(\frac{\partial \log a_j(z_i;c)}{\partial_{c_k}}\right)^2  \Dt_i \, .
\end{aligned}
\end{equation}
Here the sequence $(z_i)_{i=0}^T$ corresponds to the mean-field part of \eqref{eq:approx_MFE}. 
This approximation is usually valid for large populations and usually cannot capture complex dynamics
such as bistability nor exit times nor rare events. Here we use the time series data \eqref{eq:data} in place of the sequence $(z_i)_{i=0}^T$.

Due to the parametric sparsity usually found in biochemical models, i.e., \eqref{eq:sp_pars}, we can assume that the  computational work required to compute these quantities is of the order $O(K)$ per timestep. 
In contrast, most of the sensitivity-based reduction methods require to solve the classical parametric sensitivity adjoint system \eqref{eq:adjoint}, which requires work on the order of $O((K{+}1){\times}d)$, where $d$ is the number of state variables, as presented in the next remark.

\begin{rem}[Classic parametric sensitivity system]
\label{rem:adjoint}
The classic parametric sensitivity analysis for deterministic reaction networks consists of solving the coupled system: 
\begin{equation}
\label{eq:adjoint}
\left\{ \!
\begin{aligned}
dz  &=  b(z;c)dt \\
ds_k  &= \frac{\partial b}{\partial z} s_k dt + \frac{\partial b}{\partial s_k}dt \, , \quad k{=}1,...,K \, , \,\, t \in \mathbb{R}_+ \, ,
\end{aligned}
\right. \, ,
\end{equation}
where $b(z;c) = \nu a(z;c)$, $z \in \mathbb{R}^d$ are the state variables, and $c \in \mathbb{R}^K$ is the parameter vector, for the sensitivity indexes 
\begin{equation}
\label{eq:SIs_adj}
s_k := \frac{\partial z}{\partial c_k} \, , \quad k{=}1,...,K \, .
\end{equation}
The first line of the coupled system \eqref{eq:adjoint} is the reaction-rate ODE system, which approximates $\expt{F}{P_{0:T}}$, i.e. the expected value of a pathwise quantity of interest of the form $\frac{1}{T}\int_0^T f(z(s))ds$, where $f$ is a suitable observable. The computational work required for integrating this system is of the order of $(K{+}1)\times d$  per timestep, which renders this method unfeasible for high-dimensional systems. The alternative Green's function method allows to solve this system using  work of order of $d \times d$. For $d\gg 1$ and $K\gg 1$ this method turns to be impractical especially taking into account that the system is usually stiff.

\end{rem}

\section{Variational model reduction in path-space}
\label{sec:var_inf}
In this section we present one of the main results of this work: a simplified loss function that allows to find a solution to the variational inference problem \eqref{eq:opt_pr} in the macroscopic space state by using available microscopic data time series \eqref{eq:data}.

\subsection{Variational inference and coarse-graining}
Here we briefly recap the variational inference approach. Variational inference is a machine learning technique for approximating probability distributions \cite{Jordan1999, MAL-001}, and is widely used to approximate posterior densities in Bayesian models, as an alternative strategy to Monte Carlo sampling. Compared to Monte Carlo sampling, tends to be faster and more scalable to large datasets \cite{blei17}. Instead of using sampling, the main idea of variational inference is to apply optimization, and therefore for complex models it provides a relevant alternative approach. 

In this work, we aim to construct a suitable reduced  parametric model and to find the optimal set of parameters that minimizes the information loss in an efficient and principled manner. This is achieved by means of a loss function based on the following relative entropy minimization
\begin{equation}
\label{eq:opt_pr}
\theta^* := \theta_{0:T}^* := \arg \min_{\theta \in \Theta} \re{P_{0:T}}{Q^\theta_{0:T}} \, .
\end{equation}

We seek for this optimal solution $\theta^*$ based on the following first order optimality condition,
\begin{equation*}\label{eq:opt}
\nabla_\theta \re{P_{0:T}}{Q^{\theta^*}_{0:T}}  = 0 \, ,
\end{equation*}
whose solutions reveal the local optima of the relative entropy in the microscopic state space.
Notice that if the relative entropy is a strictly convex function then there is a unique global minimum. This clearly  depends  on the choice of 
the parameterized model (see \cite{HKKP16}). For example, if the parameterized model depends linearly on $\theta$ then there is a unique global minimum and the problem reduces to solve a linear system.
In this work, the parameterized model is constructed by using the original propensity functions, assumed to be an analytical function.

In what follows, we present two such loss functions; the first one is equivalent to solving the problem \eqref{eq:opt_pr} (see Theorem \ref{thm:argmin}), and the second one is an upper bound but is computationally less demanding.

\subsection{Building the approximating parametric family}
Since the main goal of this work is to construct a reduced model for a high-dimensional RN, we start by considering the application of the linear map  $\CG:\mathbb{R}^d \rightarrow \mathbb{R}^{\bar d}$. 
Examples of these maps used in the literature for molecular systems (called coarse-graining maps) include the mapping to the centers of mass of groups of particles, or a projection on a specific set of particles, among others. Without losing generality, in this work we consider the following map. For $x\in \mathbb{R}^d$ let $\CG$ be such that
\begin{equation}
\label{eq:var_map}
x \mapsto (\CG x, \CG^{\comp} x) = (\bar x, \hat x)
\end{equation}
where $\bar x \in \mathbb{R}^{\bar d}$ is the macroscopic state, and $\hat x \in \mathbb{R}^{d - \bar d}$. If we denote with the same symbol $\CG \in \mathbb{R}^{\bar d\times d}$ the matrix representation of the map, then $\CG$ together with $\CG^\comp$ form  
a permutation matrix.
This $\CG$ map determines the macroscopic state, $\bar x$, as a function of the full model state $x$. Further details of this map can be found on Section \ref{sec:sel_var}.

Assuming the data is generated by means of the Euler discretization of the CLE (see \eqref{eq:data}) is clear that $x_i$ is a multivariate Gaussian random variable for $i{=}1,2,...,T$. 
We can then split the transition density in the microscopic state space (see \eqref{eq:pdist_CLE}) as
$$
p(x,x') = p^{(1)}(x,\bar x')p^{(2)}(x,\bar x'|\hat x')\, , \quad \text{ for } x,x' \in \mathbb{R}^d , \,  \bar x' {\in} \mathbb{R}^{\bar d}\, ,
$$
and the associated microscopic parameterized transition density (see \eqref{eq:ppdist_CLE}) as 
\begin{equation}
\label{eq:micro_dens}
q^\theta(x,x') = r(x'|\bar x') p^\theta(\bar x, \bar x')\, , \quad \text{ for } x,x'{ \in} \mathbb{R}^d, \, \bar x' {\in} \mathbb{R}^{\bar d} \, ,
\end{equation}
where $r$ is a reconstruction density independent of $\theta$, and $p^\theta$ is the  density of the reduced model. This reconstruction density essentially recovers the lost degrees of freedom and, together with $p^\theta$, approximates the microscopic density. This reconstruction density serves as an auxiliary tool that connects the reduced dynamics with the full dynamics on the same space. The important fact of this density is that it is independent of $\theta$ and can be arbitrarily chosen. In this work, we do not focus on this auxiliary density, since it is not relevant in our optimization procedure. 
We have then the original transition density $p$ which is associated with the path distribution $P_{0:T}$. For the Euler CLE case, the forumla is given by equation \eqref{eq:pdist_CLE}. Then, we have the parameterized approximation to the original microscopic path distribution $P_{0:T}$, i.e., $Q_{0:T}^\theta$, which is associated to the transition density $q^\theta$. For the Euler CLE case, the formula is given by equation \eqref{eq:ppdist_CLE}. Finally, we have the transition density $p^\theta$ (see \eqref{eq:micro_dens}), which is associated with the path distribution of the reduced model, which lives in the macroscopic space. The main goal of this work is to construct a reduced model which corresponds to this transition density.
In Section \ref{sec:MR} we show how to construct this reduced model, 
which can be described by a $\bar d$-dimensional time dependent state vector $\bar x =\CG x$, and $\bar J$ reaction channels $((\bar \nu_j, \bar a_j))_{j=1}^{\bar J}$, where $\bar \nu_j$ is a $\bar d$-dimensional state change vector properly constructed by considering the full model stoichiometry, and $\bar a_j= \bar a_j(\bar x;\theta)$, $j{=}1,2,...,\bar J$ its propensity functions, with $\theta \in \Theta$ a $\bar K$-dimensional vector of model parameters. The definition of $\bar \nu$ is given in \eqref{eq:bar_nu} while th definition of $\bar a$ is given in \eqref{eq:bar_a}.

In this work, propensity functions in the reduced model have the same functional form of the corresponding full model propensity functions. Two main reasons apply. First, from the modeling point of view, usually it is desirable that the functional form of the propensity functions belong to the same parametric space as the full model. In the biochemical literature, propensity functions usually include mass-action and Michaelis-menten type of expressions, but may in general include closed form expressions. As a consequence the dependence of the propensity functions on the parameters is usually non-linear. Second, in the context of biochemical reaction networks, each state variable is associated with a different species and in some cases with physical location. Therefore, even a linear transformation may be difficult to interpret and thus render the reduction meaningless. 

\subsection{Loss function for time-series data training}
\label{sec:loss_fct}
The following theorem allows to connect the optimization problem \eqref{eq:opt_pr} with the macroscopic space of state variables. This theorem also provides a  pathwise relative entropy representation for the Euler CLE case and shows  computable quantities that can be trained by means of the microscopic data.

\begin{thm}
\label{thm:argmin}
Let $P_{0:T}$ be the path distribution of the Euler CLE approximation of a RN $((\nu_j,a_j))_{j=1}^J$, and $Q^\theta_{0:T}$ be the path distribution of a parameterized approximation. Let $\CG: \mathbb{R}^d \rightarrow \mathbb{R}^{\bar d}$ be a linear map as in \eqref{eq:var_map}. Then, we have 
\begin{equation}
\label{eq:opt_pple}
\arg \min _{\theta \in \Theta} \re{P_{0:T}}{Q^\theta_{0:T}} = \arg \min_{\theta \in\Theta} \left[ R_{0:T}(\theta) + M_{0:T}(\theta)\right]
\end{equation}
where $$R_{0:T}(\theta) := \sum_{i=1}^T R_i(\theta), \quad M_{0:T}(\theta) := \sum_{i=1}^T M_i(\theta) \Dt_i\,, \quad \Dt_i {>} 0 \, ,$$ and
\begin{align}
\label{eq:ayb}
R_i(\theta) &:= 
\frac{1}{2} \expt{ \tr{\CG \Sigma(x) \CG^{tr} \bar \Sigma^{-1}(\bar x;\theta)} - \log \det \left(  \CG \Sigma(x) \CG^{tr} \bar \Sigma^{-1}(\bar x;\theta) \right) }{\nu_{i-1}} \,  ,
\\
\nonumber M_i(\theta) &:=  \frac{1}{2}\expt{ (\bar b(\bar x;\theta) -\CG b(x))^{tr} \bar \Sigma^{-1}(\bar x;\theta)(\bar b(\bar x;\theta) -\CG b(x))}{\nu_{i-1}}\, .
\end{align}
Here $\Sigma(x)=\sigma(x)\sigma^{tr}(x)$, where $\sigma(x) = \nu \sqrt{\diag (a(x))}$ and $b(x) = \nu a(x)$ are the diffusion and drift coefficients of the RN $((\nu_j,a_j))_{j=1}^J$ respectively. Moreover, $\bar \Sigma(\bar x;\theta)=\bar \sigma(\bar x;\theta)\bar \sigma^{tr}(\bar x;\theta)$ with $\bar \sigma$ and $\bar b$ the reduced RN $((\bar \nu_j,\bar a_j))_{j=1}^{\bar J}$ diffusion and drift coefficients respectively.
\end{thm}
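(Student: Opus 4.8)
The plan is to begin from the additive pathwise decomposition of the relative entropy in \eqref{eq:pRE}, namely $\re{P_{0:T}}{Q^\theta_{0:T}} = \re{\INIT}{\INITAPP} + \sum_{i=1}^T \re{P_i}{Q^\theta_i}$, and to note that the initial term $\re{\INIT}{\INITAPP}$ carries no dependence on the reduced drift and diffusion being optimized, so it is inert for the $\arg\min$. Into each instantaneous term \eqref{eq:iRE} I would substitute the two factorizations stated before the theorem: the microscopic kernel splits as $p(x,x') = p^{(1)}(x,\bar x')\,p^{(2)}(x,\bar x'|\hat x')$ and the parameterized one as $q^\theta(x,x') = r(x'|\bar x')\,p^\theta(\bar x,\bar x')$ from \eqref{eq:micro_dens}, with $r$ independent of $\theta$. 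Then $\log\frac{p}{q^\theta} = \log\frac{p^{(1)}(x,\bar x')}{p^\theta(\bar x,\bar x')} + \log\frac{p^{(2)}(x,\bar x'|\hat x')}{r(x'|\bar x')}$ separates a $\theta$-dependent piece from a $\theta$-independent one, and the latter contributes a quantity that does not affect the minimizer and may be discarded.

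For the $\theta$-dependent piece, the essential reduction is that $\log\frac{p^{(1)}}{p^\theta}$ depends on $x'$ only through $\bar x' = \CG x'$, so integrating $p(x,x')$ over the complementary coordinates $\hat x'$ collapses the inner integral of \eqref{eq:iRE} to $\int p^{(1)}(x,\bar x')\log\frac{p^{(1)}(x,\bar x')}{p^\theta(\bar x,\bar x')}\,d\bar x'$, that is, a relative entropy between two laws on $\mathbb{R}^{\bar d}$. Both are Gaussian: since by \eqref{eq:microp} the increment $x'$ has mean $m_\Dt(x)$ and covariance $\Dt\,\Sigma(x)$, its linear image $\bar x' = \CG x'$ is $\mathcal{N}\!\left(\CG m_\Dt(x),\,\Dt\,\CG\Sigma(x)\CG^{tr}\right)$, which is exactly $p^{(1)}(x,\cdot)$, while $p^\theta(\bar x,\cdot) = \mathcal{N}\!\left(\bar m_\Dt(\bar x;\theta),\,\Dt\,\bar\Sigma(\bar x;\theta)\right)$ is the reduced Euler CLE kernel.

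Next I would invoke the closed-form expression for the relative entropy between two $\bar d$-dimensional Gaussians and simplify term by term. The trace and log-determinant contributions give $\frac{1}{2}\bigl[\tr{\CG\Sigma\CG^{tr}\bar\Sigma^{-1}} - \log\det(\CG\Sigma\CG^{tr}\bar\Sigma^{-1})\bigr]$, where the $\Dt$ factors cancel both inside the matrix product and in the determinant ratio, producing exactly $R_i(\theta)$ after taking the $\nu_{i-1}$-expectation, plus an inert constant $-\bar d/2$. For the mean term, $m_\Dt(x) = x - \Dt\, b(x)$ yields $\CG m_\Dt(x) = \bar x - \Dt\,\CG b(x)$ and $\bar m_\Dt(\bar x;\theta) = \bar x - \Dt\,\bar b(\bar x;\theta)$, so the mean difference is $\Dt(\CG b - \bar b)$; weighting it by $(\Dt\,\bar\Sigma)^{-1}$ leaves a single surviving factor of $\Dt$ and produces precisely $M_i(\theta)\,\Dt_i$. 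Summing over $i$ and dropping the $\theta$-independent contributions collected along the way then gives \eqref{eq:opt_pple}.

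The main obstacle I expect is bookkeeping rather than a conceptual difficulty: keeping the powers of $\Dt$ straight, since covariances scale like $\Dt$ so that $\Dt^{\bar d}$ cancels in the determinant ratio and the trace (hence no $\Dt_i$ on $R_i$) while exactly one factor survives in the quadratic mean term (hence the $\Dt_i$ on $M_i$), and justifying rigorously that the conditional and reconstruction factors $\log\frac{p^{(2)}}{r}$ are genuinely $\theta$-free and therefore removable from the $\arg\min$. One must also confirm that, because $\CG$ is a block of a permutation matrix, $\CG\Sigma\CG^{tr}$ is the correct marginal covariance and $p^{(1)}(x,\cdot)$ is literally this Gaussian marginal, so that the collapse of the $\hat x'$ integral is exact and no residual cross terms with $p^{(2)}$ remain.
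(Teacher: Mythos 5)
Your proposal is correct and follows the same structural route as the paper's proof of Theorem \ref{thm:argmin} (via Lemma \ref{lemm:argmin} in the Appendix): the factorizations $p=p^{(1)}p^{(2)}$ and $q^\theta=r\,p^\theta$, the discarding of the $\theta$-independent $\log\bigl(p^{(2)}/r\bigr)$ contribution, and the collapse of the $\hat x'$-integral using $\int p^{(2)}\,d\hat x'=1$. Where you genuinely diverge is in the endgame. The paper expands the quadratic form $K_{\Dt_i}$ by adding and subtracting $\CG m_{\Dt_i}(x)$ and then evaluates the surviving second-moment integral by hand, via the change of variables $z=S(\bar x'-\CG m_{\Dt_i}(x))$ with $\bar\Sigma^{-1}=S^{tr}S$ and a trace identity, to produce $\tr{\CG\Sigma\CG^{tr}\bar\Sigma^{-1}}$; it also phrases the conclusion at the level of gradients, $\nabla_\theta\rer{P_i}{Q_i^\theta}=\nabla_\theta\left[R_i(\theta)+\Dt_i M_i(\theta)\right]$, and then infers equal minimizers. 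You instead recognize the collapsed inner integral as the relative entropy between the two Gaussians $\mathcal{N}\bigl(\CG m_{\Dt_i}(x),\Dt_i\,\CG\Sigma(x)\CG^{tr}\bigr)$ and $\mathcal{N}\bigl(\bar m_{\Dt_i}(\bar x;\theta),\Dt_i\,\bar\Sigma(\bar x;\theta)\bigr)$ and quote the closed-form Gaussian--Gaussian formula, which delivers the trace, log-determinant and mean-difference terms (plus the inert $-\bar d/2$) in one stroke; your $\Dt$ bookkeeping checks out, with cancellation in the trace and determinant ratio and exactly one surviving factor in the mean term. This buys a shorter argument that absorbs the paper's explicit integral computation into a standard identity, and it is logically slightly cleaner: equality of the objectives up to an additive $\theta$-independent constant immediately gives equal $\arg\min$ sets, whereas the gradient identity needs the tacit extra step that functions with identical gradients on a connected $\Theta$ differ by a constant. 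Both arguments require $\bar\Sigma$ nonsingular (the paper relaxes this via Moore--Penrose in a remark) and that the initial law $\nu^\theta$ contributes nothing to the minimization, which you state explicitly and the paper leaves implicit.
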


The proof of this Theorem is presented in the Appendix. Notice that this result allows to optimize the parameter vector $\theta$ by considering the drift associated to the reduced model, $\bar b(\bar x;\theta) = \bar \nu \bar a(\bar x;\theta)$, and the projected microscopic drift, $\CG \nu a$, instead of the microscopic drift, since the pathwise quantities $R_{0:T}(\theta)$ and $M_{0:T}(\theta)$ can be computed in the macroscopic state space. 

The macroscopic pathwise quantity 
\begin{equation}
\label{eq:lossf}
F_{0:T}(\theta) : =R_{0:T}(\theta) + M_{0:T}(\theta)
\end{equation}
can be then interpreted as a pathwise loss function whose minimization allows to determine the optimal value of the approximating distribution set of parameters. 
We notice that the $R_{0:T}$ term acts as a penalization term for $M_{0:T}$ by the  discrepancy of $\bar \Sigma$ from $\CG \Sigma \CG^{tr}$. 

\emph{A simplified loss function.}
Next, we derive a simplified loss function from \eqref{eq:lossf} that substantially simplifies the numerical method search space, and therefore the associated  computational work to find its solution. First we notice that 
\begin{equation}
\label{eq:ineq_R}
R_i(\theta)  \geq \frac{\bar d}{2}, \quad \text{ for } \theta \in \Theta \, ,
\end{equation}
with equality if and only if $\bar \Sigma = \CG\Sigma \CG^{tr}$. This is proved in Proposition \ref{prop:ineq_R} (see Appendix).
Replacing in \eqref{eq:lossf} $\bar \Sigma$ by $\CG \Sigma \CG^{tr}$ we obtain
\begin{align*}
F_{0:T}(\theta)\Big|_{\bar \Sigma = \CG \Sigma \CG^{tr}} & =R_{0:T}(\theta) + M_{0:T}(\theta)\Big|_{\bar \Sigma = \CG \Sigma \CG^{tr}} \\
&= T \frac{\bar d}{2} + \sum_{i=1}^T M_i(\theta)\Big|_{\bar \Sigma = \CG \Sigma \CG^{tr}} \Dt_i  \\
&=  T \frac{\bar d}{2} + \frac{1}{2}  \sum_{i=1}^T \expt{\norm{\bar b(\bar x; \theta - \CG b(x))}^2_{\CG}}{\nu_{i-1}} \Dt_i \, ,
\end{align*}
where $\norm{\cdot}^2_\CG$ is defined as  
\begin{equation}
\label{eq:norm}
\norm{z}^2_\CG := z^{tr} (\CG \Sigma \CG^{tr})^{-1} z \, .
\end{equation}

Finally, ignoring the constant $T \frac{\bar d}{2}$ we have the following simplified loss functional
\begin{equation}
\label{eq:losse}
E_{0:T}(\theta) :=  \frac{1}{2}  \sum_{i=1}^T \expt{\norm{\bar b(\bar x; \theta) - \CG b(x))}^2_{\CG}}{\nu_{i-1}} \Dt_i \, .
\end{equation}

By applying the same approximation as in Section \ref{sec:MF_approx}, i.e. \eqref{eq:approx_MFE} we obtain
\begin{equation}
\label{eq:losse_approx}
E_{0:T}(\theta) \approx \hat{E}_{0:T}(\theta) := \frac{1}{2} \sum_{i=1}^T \norm{\bar b(\CG z_i;\theta)-\CG b(z_i)}^2_\CG \Dt_i \, .
\end{equation}
As before, the sequence $(z_i)_{i=0}^T$ corresponds to the mean-field part of \eqref{eq:approx_MFE}, replaced by the time series data \eqref{eq:data}. 

We then solve the following optimization problem 
\begin{equation}
\label{eq:opt_losse}
\min_{\theta \in \Theta} \hat{E}_{0:T}(\theta) \, ,
\end{equation}
by considering steepest descent and Newton-Raphson
type of methods, recalling that the Hessian of the relative entropy is the Fisher Information Matrix. In this work, we do not explore further this topic, and we use instead standard optimization packages to find a numerical solution, in particular MATLAB 2016b.

We emphasize that the CLE formulation is only required to derive the loss function which allows to find an approximate solution of the variational problem \eqref{eq:opt_pr}. The loss function is suitable to be applied in non-Langevin regimes, as we show in the pure jump stochastic models in Section \ref{sec:stoc_exp}.

\begin{rem}
The loss functions \eqref{eq:lossf} and \eqref{eq:opt_losse}  may be regularized by using penalization terms like Tichonov or $L^1$ to simplify the optimization procedure or to obtain a more parsimonious model. For example we can solve
\begin{equation}\min_{\theta \in \Theta}  \frac{1}{2} \sum_{i=1}^T \norm{\bar b(\CG z_i;\theta)-\CG b(z_i)}^2_\CG \Dt_i + \lambda \norm{\theta}^2_\CG \, ,
\end{equation}
where $\lambda$ is a parameter to determine (see \cite{efron2016computer, aster2005parameter, kaipio2005statistical} for additional references).
\end{rem}

\begin{rem}
One of the computational novelties of our method lies in the derivation and implementation of a pathwise force matching condition (termed Dynamic Force Matching) such as \eqref{eq:opt_losse} and Theorem \ref{thm:argmin}, where the norm is now $\norm{\cdot}^2_\CG$ instead of the Euclidean one. The classic force matching method can be viewed as a particular case (see for instance \cite{annurev-biophys,Izvekov} and references therein). \end{rem}

\begin{rem}
The optimization principle in \eqref{eq:opt_pple} can be further extended to obtain an improved time-dependent optimal parameterization, which is obviously more demanding in terms of computational work. That is, obtain  
$$
\theta_{0:T}^* = (\theta^*_i)_{i=1}^T, \quad i{=}1,2,...,T \, ,
$$
by optimizing 
$$
\arg \min_{(\theta_i)_{i=1}^T }\frac{1}{2}\sum_{i=1}^T\expt{\norm{\bar b(\bar x_i;\theta_i) - \CG b(x_i))}^2_\CG} {\nu_{i-1}}\Dt_i \, .
$$
\end{rem}


\begin{rem}
In the case that the matrix $\bar \Sigma$ is singular, the conclusions of Theorem \ref{thm:argmin} are still valid, by means of the Moore-Penrose generalized inverse instead of the inverse, and the pseudo-determinant instead of the determinant in \eqref{eq:ayb}. We refer to \cite{Wilkinson:12} and references therein for additional details.
\end{rem}

In the next Section we present a procedure to construct a reduced model, i.e., how to choose $\bar \nu$ and $\bar a$, such that $\bar b = \bar \nu \bar a$ and $\bar \sigma = \bar \nu \sqrt{ \diag(\bar a)}$ are the drift and diffusion terms of the reduced model.

\begin{rem}
The drift of the reduced model, $\bar b$, may be alternatively constructed by using the fact that most physical models of the form have only a few relevant terms that define the equations of motion of the system (i.e. the RHS of the system $\dot{x}(t)=f(x)$). These methods are called sparse identification of nonlinear dynamics (see for instance \cite{Brunton3932, Schmidt81}).
Since the construction can be considered on reduced space by virtue of Theorem \ref{thm:argmin}, this certainly mitigates one of the main drawbacks of the aforementioned methods, that is, when data is high-dimensional and therefore the state dimension of the dictionary of candidate functions is usually prohibitively large.
\end{rem}

\section{Model reduction procedure}
\label{sec:MR}
In this section we present an information-based model reduction procedure for high-dimensional reaction networks. The goal is to 
construct a suitable reduced model in a principled and efficient manner using the tools developed in Section \ref{sec:SB}. This reduction is achieved in several steps as follows. In Section \ref{sec:summ_red} we present a summary of the steps, in Section \ref{sec:sel_pars} we show how to identify the most sensitive parameters, in sections \ref{sec:sens_chans} and \ref{sec:sel_var} to use the stoichiometry of the network to choose the associated variables and reaction channels. In Section \ref{sec:model_red} we show how to construct a paramterized family of reduced models to then in Section \ref{sec:fitting} how to find an optimal model. Finally, in Section \ref{sec:augment} we show how to validate and iteratively improve the reduced model obtained in the previous steps.

\subsection{Summary of the reduction method}
\label{sec:summ_red}
The reduction method presented here can be described in 6 steps as follows.

\begin{inparadesc}
\item \emph{Step 1. Selecting parameters and reaction channels.} The reduction method starts with the estimation of the full model pFIM \eqref{eq:pFIM} by using the available time series \eqref{eq:data} to determine the parameters that accumulate at least $\kappa\%$ (usually $97\%$ or $99\%$) of the total information. This criterion determines a set of parameter indexes $\mathcal{P} \subseteq \{1,2,...,K\}$ to include in the reduced model and later train (in Step 4). By means of the $\varphi$ map, \eqref{eq:varphi}, we determine a set of reaction channel indexes to include in the reduced model, $\mathcal{J}_\mathcal{P}$. This set is if indexes correspond to reactions that include at least one sensitive parameter as per the pFIM diagonal. Finally, notice that  parameter map $\CGp$ \eqref{eq:defvargamma} is also determined by $\mathcal{P}$.

\item \emph{Step 2. Selecting variables.} Given the set of reaction channels $\mathcal{J}_\mathcal{P}$ from Step 1, define a state variable map $\CG$ that contains every species that takes part in the stoichiometry of any reaction $j \in \mathcal{J}_\mathcal{P}$. The set of species (or state variables) that satisfies the aforementioned relation, are selected to be the species of the reduced model, and denoted by $\mathcal{S}_\mathcal{P}$. 

\item \emph{Step 3. Construction of a parameterized family of reduced models.} Given $\mathcal{J}_\mathcal{P}$ and $\mathcal{S}_\mathcal{P}$, construct a family of candidate reduced models of the form $((\bar \nu_j, \bar a_j))_{j=1}^{\bar{J}}$, parameterized by the sensitive parameters of the full model in the sense \eqref{eq:SB}, and taking into account the stoichiometry structure.  This step includes the definition of $\bar \nu_j$ and $\bar a_j = \bar a_j(\bar x;\theta)$, where $\theta$ is the parameter vector of the reduced model. 

\item \emph{Step 4. Model training.} By means of the loss function \eqref{eq:losse_approx} the reduced model parameters, denoted by $\theta$, are fitted to the time series data \eqref{eq:data}. This optimization also provides a goodness-of-fit in terms of entropy loss.

\item \emph{Step 5. Validation.} Determine if further reaction channels should be added to the reduced model, by means of computing a suitable distance on the time series data \eqref{eq:data}. 

\item \emph{Step 6. Iteration.} Finally, previous steps can be subsequently iterated to obtain either alternative or further reduced models that may provide a better fit.
\end{inparadesc}

The model reduction process (Step 1--6) is depicted in Figure \ref{fig:method}.

\begin{figure}[h!]
\centering
\begin{tikzpicture}[IBMRRN, node distance=1.5cm]
\node (model) [io] {Full Model: $((\nu_j,a_j))_{j=1}^J$};
\node (data) [io, left of=model, xshift=5.5cm] {Data: $(x_i)_{i=0}^T$ \eqref{eq:data}};
\node (pFIM) [process, below of=data] {Compute diag(pFIM) \eqref{eq:pFIM}};
\node (chans) [process, below of=pFIM] {Determine $\mathcal{J}_{\mathcal{P}}$ \eqref{eq:sens_chans}};
\node (info) [io, left of=chans, xshift=-4cm] {Information  threshold \eqref{eq:pFIMthr}};
\node (maps) [process, below of=chans] {Construct $\CGp$ \eqref{eq:defvargamma} and $\CG$ \eqref{eq:defpi}};
\node (nu) [io, below of=info] {Stoichiometry: $\nu_{in}$, $\nu_{out}$};

\node (RM) [process, below of=maps] {{\small Construct parameterized family of reduced models:} $((\bar \nu_j,\bar a_j(\cdot;\theta)))_{j=1}^{\bar J}$};
\node (train) [process, below of=RM] {Loss function \eqref{eq:losse_approx} training to get $\theta^*$};

\node (dec1) [decision, below of=train] {\tiny{Reduction ok?}};
\node (tol) [io, right of=dec1, xshift=2cm] {user defined error TOL};

\node (end) [io, below of=dec1] {Output: $((\bar \nu_j,\bar a_j(\cdot;\theta^*)))_{j=1}^{\bar J}$};

\draw [arrow] (model) -- (pFIM);
\draw [arrow] (data) -- (pFIM);
\draw [arrow] (pFIM) -- (chans);
\draw [arrow] (info) -- (chans);
\draw [arrow] (nu) -- (maps);
\draw [arrow] (chans) -- (maps);
\draw [arrow] (maps) -- (RM);
\draw [arrow] (RM) -- (train);
\draw [arrow] (train) -- (dec1);
\draw [arrow] (tol) -- (dec1);
\draw [arrow] (dec1) -- node[right] {yes} (end);

\draw [arrow] (dec1) node[above] {\hspace{-85pt} no} -| node{} ++(-8,1) node[above] {\hspace{45pt} iterate} |- (info) ;

\end{tikzpicture}
\caption{\label{fig:method} Main steps in the information-based model reduction method for  high-dimensional reaction networks method (see  Section \ref{sec:summ_red}).}
\end{figure}
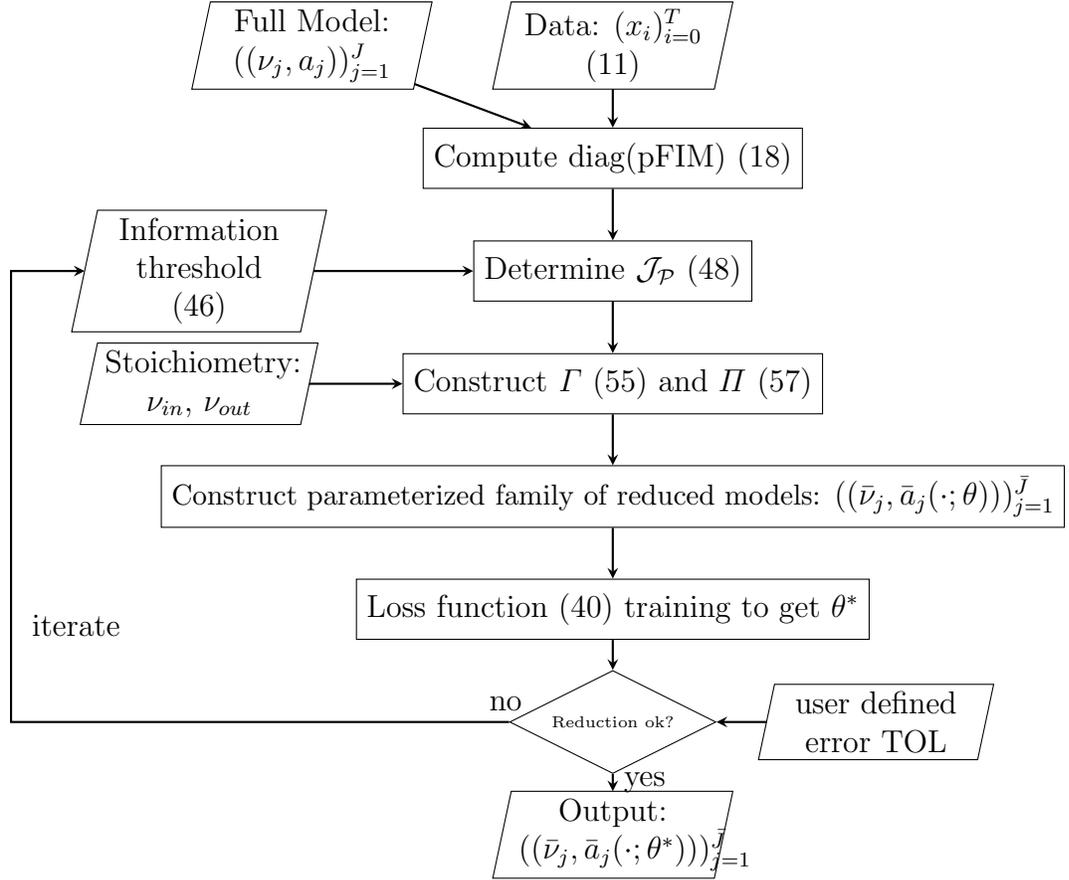

\subsection{Parameter selection (step 1)}
\label{sec:sel_pars}

By means of the information-based bound \eqref{eq:SB}, we focus our sensitivity analysis of the full model on
\begin{equation}
\label{eq:dk}
\xi_k := (\pFIM{P_{0:T}})_{k,k} \, ,
\end{equation}
which are the diagonal elements of the pFIM \eqref{eq:pFIM} of the full model, which has pathwise distribution $P_{0:T}$ \eqref{eq:pRE} and parameter vector $c$. This corresponds to perturbations of full model parameters in the canonical directions, where $\xi_k$ corresponds to parameter $c_k$.

Let $\xi_{\sigma(1)}>\xi_{\sigma(2)}>...>\xi_{\sigma(K)}$ be the ordering from highest to lowest of the diagonal elements $\xi_k$ where $\sigma$ denotes a sorting permutation of the indices. So $\sigma(1)$ is the index of the parameter with the largest diagonal value in the pFIM. In the case that the pFIM is diagonal, this is equivalent to the sorted eigenvalues of the pFIM.
Given the ordered set of diagonal elements, $\{\xi_{\sigma(1)},\xi_{\sigma(2)},...,\xi_{\sigma( K)}\}$, we say that the percentage of information contained in the corresponding parameters $\{\sigma(1),\sigma(2),...,\sigma(\bar K)\}$ is 
\begin{equation}
\label{eq:pFIMpctg}
\Xi_{\bar K}:=\frac{1}{\tr{\pFIM{P_{0:T}}}}\sum_{\ell=1}^{\bar K} \xi_{\sigma(\ell)} \, ,
\end{equation}
where $\xi_k$ is defined in \eqref{eq:dk}.

Now define 
\begin{equation}
\label{eq:set_p}
\mathcal{P} :=\{\sigma(1),\sigma(2),...,\sigma(\bar K)\}\subseteq \{1,2,...,K\}
\end{equation} \
as the set of parameter indexes intended to keep in the reduced model, such that
\begin{equation}
\label{eq:pFIMthr}
\Xi_{\bar K}\geq \kappa \, ,
\end{equation}
where $\kappa$ is a user defined threshold that represents $(\kappa\times 100)\%$ of the total information preserved in the reduced model as per the trace of the full model pFIM. We collectively call these parameters as \emph{pFIM-sensitive parameters}. In the examples studied in Section \ref{sec:exp}, keeping the parameters  whose pFIM values accumulate at least $95\%$ of the total sum is enough to include all sensitive parameters for the time average of the species, in the sense of \eqref{eq:adjoint}. 

More precisely, the reduction of the parameter space is considered as 
the application of a full rank linear map $\CGp : \mathbb{R}^K \rightarrow \mathbb{R}^{\bar K}$ such that
\begin{equation}
\label{eq:par_map}
c \mapsto (\CGp c, \CGp^{\comp,1} c, \CGp^{\comp,2} c) = (\theta, u, \tilde c) \, ,
\end{equation}
where $c \in \mathbb{R}^K$ is the full model parameter vector, $\theta \in \Theta := \mathbb{R}^{\bar K}$ is the vector of sensitive parameters, $u \in \mathbb{R}^{\bar K'}$ is the vector of insensitive parameters but still relevant for the reduction as explained below, and $\tilde c$ is the vector of irrelevant parameters. 

This map determines a partition in the full model parameter space, identifying sensitive parameters that will be included as parameters in the reduced model i.e., $\theta = \CGp c$, insensitive but relevant parameters that will be included as constants in the reduced model i.e., $u=\CGp^{\comp,1}c$, and irrelevant parameters that will be eliminated from the reduced model, i.e., $\tilde c = \CGp^{\comp,2}c$. 

The distinction between $\theta$ and $u$ is relevant when fitting the reduced model to the given time series data \eqref{eq:data} (see Step 4). This fitting is achieved by minimizing the information loss between the full model and the parameterized reduced model, and the number of dimensions of this optimization problem is given by the number of parameters of the reduced model. The linear map $\CGp$ is depicted in Figure \ref{fig:CGp}.

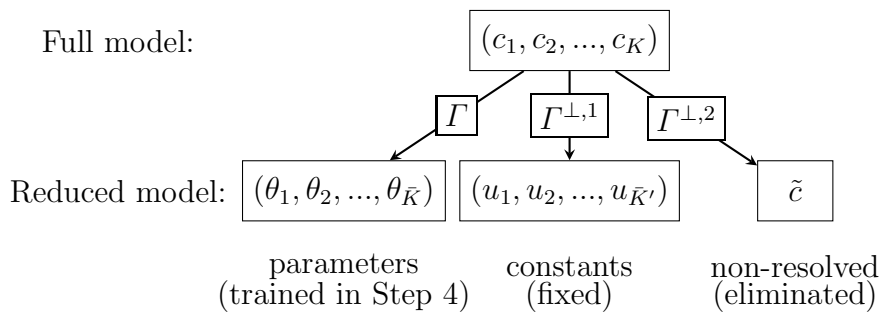
\begin{figure}[h!]
\centering
\begin{tikzpicture}[node distance=2cm]
\node (pars) [process] {$(c_1,c_2,...,c_K)$};

\node (theta) [process, below of=pars, xshift=-3cm] {$(\theta_1,\theta_2,...,\theta_{\bar K})$};
\node (u) [process, below of=pars] {$(u_1,u_2,...,u_{\bar K'})$};
\node (ct) [process, below of=pars, xshift=3cm] {$\tilde c$};

\node (c1) [below of=theta, yshift=1cm,text centered] {parameters};
\node [below of=c1,  yshift=1.6cm, text centered] {(trained in Step 4)};
\node (c2) [below of=u, yshift=1cm,text centered] {constants};
\node (c22) [below of=c2,  yshift=1.6cm, text centered] {(fixed)};
\node (c3) [below of=ct, yshift=1cm,text centered] {non-resolved};
\node (c22) [below of=c3,  yshift=1.6cm, text centered] {(eliminated)};

\node [left of=pars, xshift=-4cm] {Full model:};
\node [left of=theta,  xshift=-1cm] {Reduced model:};

\draw [arrow] (pars) -- node {\fcolorbox{black}{white}{$\CGp$}} (theta);
\draw [arrow] (pars) -- node {\fcolorbox{black}{white}{$\CGp^{\comp,1}$}} (u);
\draw [arrow] (pars) -- node {\fcolorbox{black}{white}{$\CGp^{\comp,2}$}}  (ct);

\end{tikzpicture}
\caption{\label{fig:CGp}Schematic representation of the linear $\CGp$ mapping, from full model parameters to reduced model parameters.}
\end{figure}



\begin{rem}
Further information can be extracted from the pFIM \eqref{eq:pFIM}, for example, the
spectral analysis reveals the most and the least sensitive directions
of the system around $c$, which corresponds to the eigenvector
with the largest/smaller smaller eigenvalue. 
In the same direction, parameter
identifiability can also be studied. Parameter identifiability is satisfied when all the eigenvalues of the pFIM
are above a certain threshold (see \cite{Komorowski:11}). For example, when the determinant one of the blocks is zero then the corresponding linear combinations of  parameters are
non-identifiable. 
\end{rem}

\subsection{Reaction channel selection (Step 1)}
\label{sec:sens_chans}
Here we describe how reaction channels are selected in order to keep the most sensitive ones in the sense explained below. Notice that this is not the only way to select the relevant reaction channels. Given a set of sensitive parameter indexes $\mathcal{P}$ (eq. \eqref{eq:set_p}),
the set of sensitive reaction channels is defined as the full model reaction channels that contain at least one sensitive parameter (indexed by $\mathcal{P}$), i.e.,
\begin{equation}
\label{eq:sens_chans}
\mathcal{J}_{\mathcal{P}} :=\bigcup_{k \in \mathcal{P}}\varphi(k)\, ,
\end{equation} 
where $\mathcal{J}_\mathcal{P}$ is the set of indices of reaction channels that depends on the set of parameter indexes $\mathcal{P}$, with $|\mathcal{J}_{\mathcal{P}}|=\bar J$. That is, if $j \in \mathcal{J}_\mathcal{P}$ then exists $k \in \mathcal{P}$ such that the propensity function $a_j$ explicitly depends on the parameter $c_k$. The formal definition of the reduced model propensity function is given in Section \ref{sec:model_red}, equation \eqref{eq:bar_a}.

\subsection{Selecting variables (Step 2)}
\label{sec:sel_var}
Here we describe how the matrix representation of the state variables full rank linear map $\CG : \mathbb{R}^d \rightarrow \mathbb{R}^{\bar{d}}$, such that
\begin{equation}
\label{eq:var_map_det}
x \mapsto (\CG x, \CG^{\comp,1} x, \CG^{\comp,2} x) = (\bar x, \bar y, \tilde x) \, ,
\end{equation}
is constructed. Here $\bar x \in \mathbb{R}^{\bar d}$ is the state space of the reduced model, i.e., the microscopic variables that take part in the stoichiometry of the selected reaction channels; $\bar y \in \mathbb{R}^{\bar d '}$ is the set of microscopic state variables that are not part of the stoichiometry of the selected reaction channels; and $\tilde x \in \mathbb{R}^{d-\bar d -\bar d'}$ is the set of microscopic state variables that are not included in the reduced model. 

Given the set of sensitive reaction channels $\mathcal{J}_\mathcal{P}$ (eq. \eqref{eq:sens_chans}) , the map $\CG$ is in fact a projection on a smaller number of variables $\bar x$, corresponding to the species that take part in the stoichiometry of the sensitive reaction channels. In this way, the state variables included in the reduced model are determined by $\mathcal{J}_{\mathcal{P}}$ and the stoichiometry structure of the full model, that is, $\nu_{in}$ and $\nu_{out}$. 
We denote with $\mathcal{S}_{\mathcal{P}}$ the set of indexes of variables that take part on the stoichiometry of the sensitive reaction channels $\mathcal{J}_\mathcal{P}$, i.e., 
\begin{equation}
\label{eq:sens_spec}
i \in \mathcal{S}_\mathcal{P} \iff 
\,\, \text{exists} \,\, j {\in} \mathcal{J}_{\mathcal{P}} \,\, \text{ for which } \,\, (\nu_{in})_{i,j} > 0 \text{ or } (\nu_{out})_{i,j} > 0 \, .
\end{equation}

These variables are selected because are part of the stoichiometry of the sensitive reaction channels, and therefore are considered necessary for constructing the reduced model.

Since a propensity function  may depend on any state variable, and not only the ones that take part in the stoichiometry, we define the complement of $\CG $ by $\CG^{\comp,1}$ and $\CG^{\comp,2}$. The first one takes into account, for each $j\in \mathcal{J}_\mathcal{P}$, the state variables that are included in the respective propensity function $j$ and not in the stoichiometry reaction channel $j$. In this work, those state variables are not included in the reduced model as variables, but time averages, as explained below in Section \ref{sec:model_red}. Those variables are in fact relevant but not necessary for constructing the reduced model, and therefore modeled as constant values. The linear map $\CG$ is depicted in Figure \ref{fig:CG}.


\begin{figure}[h!]
\centering
\begin{tikzpicture}[node distance=2cm]
\node (pars) [process] {$(x_1,x_2,...,x_d)$};

\node (theta) [process, below of=pars, xshift=-3cm] {$(\bar x_1,\bar x_2,...,\bar x_{\bar d})$};
\node (u) [process, below of=pars] {$(\bar y_1,\bar y_2,...,\bar y_{\bar d'})$};
\node (ct) [process, below of=pars, xshift=3cm] {$\tilde x$};

\node [below of=theta, yshift=1cm,text centered] {variables};
\node (c2) [below of=u, yshift=1cm,text centered] {constants};
\node [below of=c2, yshift=1.6cm,text centered] {(time averages)};
\node (c3) [below of=ct, yshift=1cm,text centered] {non-resolved};
\node [below of=c3, yshift=1.6cm,text centered] {(eliminated)};

\node [left of=pars, xshift=-4cm] {Full model:};
\node [left of=theta,  xshift=-1cm] {Reduced model:};

\draw [arrow] (pars) -- node {\fcolorbox{black}{white}{$\CG$}} (theta);
\draw [arrow] (pars) -- node {\fcolorbox{black}{white}{$\CG^{\comp,1}$}} (u);
\draw [arrow] (pars) -- node {\fcolorbox{black}{white}{$\CG^{\comp,2}$}}  (ct);

\end{tikzpicture}
\caption{\label{fig:CG} Schematic representation of the full model state variables to reduced model state variables.}
\end{figure}
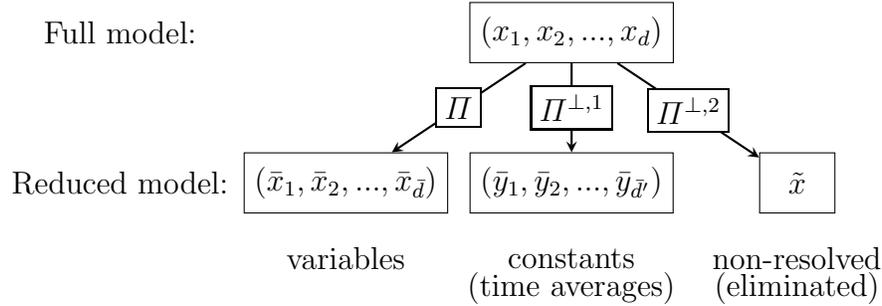





\subsection{Construction of a parameterized family of reduced models (Step 3)}
\label{sec:model_red}
Here we present a particular method to construct the parametric family of reduced models.
Let $(\nu_j,a_j)_{j=1}^J$ be the full RN, as defined in Section \ref{sec:RNs}. Let $\mathcal{P}$ be a set of parameters to include in the reduced model, and $\mathcal{J}_\mathcal{P}$ be the corresponding set of reaction channels (from Step 1 and Step 2). Let $\bar x = \CG x$ denote the macroscopic state, and recall that $x$ can be decomposed as $(\CG x,\CG^{\comp,1} x,\CG^{\comp,2} x) = (\bar x,\bar y, \tilde x)$ \eqref{eq:var_map_det}.
Let $\theta = \varGamma c$ denote the reduced model parameter vector, and recall that the microscopic parameter vector can be decomposed as $(\varGamma c,\varGamma^{\comp,1} c, \varGamma^{\comp,2} c) = (\theta,u,\tilde c)$ \eqref{eq:par_map}. 


The reduced model stoichiometry is defined as follows. For $i \in \mathcal{S}_{\mathcal{P}}$ and $j\in \mathcal{J}_{\mathcal{P}}$ as
\begin{equation}
\label{eq:bar_nu}
(\bar \nu_{in})_{k,l} := (\nu_{in})_{i,j} \,\, \text{ for } \,\, k =1,2,...,\bar d, \,\, l =1,2,..., \bar J \, ,
\end{equation}
where the indexes $k=k(i)$ and $l=l(j)$ preserve the order of $i$ and $j$, and similarly for $\nu_{out}$. Then $\bar \nu = \bar \nu_{out} - \bar\nu_{in}$.

Let $x_{0:T}$ denote the time average of the microscopic data $(x_i)_{i=0}^T$ \eqref{eq:data},
\begin{equation}
\label{eq:timeavg}
x_{0:T} := \frac{1}{T}\sum_{i=1}^T x_i \Dt_i \, .
\end{equation}
for a non-uniform time step $\Dt_i$.

We define the reduced model propensity functions as follows. For $j \in \mathcal{J}_\mathcal{P}$ there is a corresponding $k =k(j) \in \{1,2,...,\bar J\}$ such that
\begin{equation}
\label{eq:bar_a}
\bar{a}_{k}(\bar{x};\theta) := a_j((\bar x, \CG^{\comp,1} x_{0:T});(\theta, u)) \,, \end{equation}
where $\bar{x} \in \mathbb{R}^{\bar{d}}, \theta \in \mathbb{R}^{\bar K}$. 

Some observations apply. The reduced model propensity functions are defined to have the same functional form than its full model counterparts, but evaluated at $(\bar x,\CG^{\comp,1} x_{0:T})$ and its parameters evaluated at $(\theta, u)$ \eqref{eq:par_map}. 
Since $j\in \mathcal{J}_{\mathcal{P}}$, the full model propensity function $a_j$ depends explicitly on state variables $\bar x = \CG x$ and $\bar y = \CG^{\comp,1} x$ only. For the same reason, it only depends explicitly on parameters $\theta = \varGamma c$ and constants $u = \varGamma^{\comp,1} c$.
Here we choose $\bar y$ to be equal to the projection of the time average of the data, $\CG^{\comp,1} x_{0:T}$.
This is a natural choice consistent with long-term behaviour assuming that exists the time-average limit of the mean field approximation of the full model, $z(t)$, in the sense that $\lim_{T\rightarrow +\infty} \frac{1}{T}\int_0^T y(t)dt = z_\infty$, where $z_\infty$ is a constant.


It is important to notice that this reduction method focuses on the stoichiometry, i.e., structure of the reaction network, and not on the propensity functions. This allows to take advantage of the natural sparsity of the $\nu$ matrix and to use the full model function space for the reduced model propensity functions.


\subsection{Model training (Step 4)} 
\label{sec:fitting}
Given the times series data $(x_i)_{i=0}^T$ \eqref{eq:data} with non-uniform step $\Dt_i$, we numerically find $\theta^*$ by maximizing the loss function \eqref{eq:losse_approx} 
\begin{equation}
\label{eq:opt_e}
\arg \min_{\theta}\frac{1}{2}\sum_{i=1}^T \norm{\bar b(\bar x_i;\theta) - \CG b(x_i))}^2_\CG \Dt_i \, ,
\end{equation}
where 
$
\bar{b}(\bar x;\theta) = \bar{\nu} \bar{a}(\bar x ;\theta) 
$
and the norm $\norm{\cdot}^2_\CG$ as in \eqref{eq:norm}. Even though the loss function $E_{0:T}(\theta)$, as given by formula \eqref{eq:losse} is based on the Langevin approximation, we can plug-in any time series as \eqref{eq:data}, by using the mean-field approximation discussed in Section \ref{sec:MF_approx}.

%
%
%
%
%
%
%

\subsection{Validation and improvement of the reduced RN (Step 5 and Step 6)}
\label{sec:augment}
In this step, we assess the quality of the mean-field trajectories generated by a fitted reduced model (as obtained in Step 4) and the mean-field trajectories generated by the full model. This comparison can be also performed by using the times series data \eqref{eq:data} instead of the full model mean-field. We opt for using the former one. Also, we discuss how the reduced model may be expanded by adding reaction channels of a given species of interest.
Suppose that, in a given a fitted reduced model with a set of species $\mathcal{S}$, exists a species of interest $i \in \mathcal{S}$ such that 
$$\sup_{t \in [0,T]} \frac{\abs{z_i(t)-\bar z_i(t)}}{z_i(t)} >TOL \, ,
$$
where $z_i(t)$ denotes the full model mean-field trajectory of the $i$-th state component, and, similarily $\bar z_i$, is the mean-field trajectory generated with the reduced model, for a user defined relative tolerance $TOL > 0$. 
Since the diagonal of the pFIM determines a linear order on the parameters, any reduced model that can be obtained by changing the information threshold $\kappa$ are nested in terms of species and reaction channels. We now discuss an idea to obtain models that are not attached to this nested hierarchy, focusing on a one step analysis, that can be easily extended to a multi-step one by traversing the graph defined by the stoichiometry structure of the full model. 
Let $\mathcal{J}_i$ be the set of reaction channel indexes such that for $j \in \mathcal{J}_i$, we have 
\[
(\nu_{in})_{i,j} > 0 \quad \text{ or } \quad (\nu_{out})_{i,j} > 0 \, ,
\]
that is, $\mathcal{J}_i$ denotes the set of reaction channels in which species $i\in \mathcal{S}$ takes part in the  stoichiometry. Then, if $\mathcal{J}_i$\textbackslash{}$\mathcal{J}_p$ is not empty, a new state variables map can be defined as in Section \ref{sec:sel_var}, equation \eqref{eq:sens_spec}, but using $\mathcal{J}_{\mathcal{P}} \cap \mathcal{J}_i$ instead of $\mathcal{J}_{\mathcal{P}}$ in \eqref{eq:sens_spec}. 
A simple illustration of this extension idea is presented in the first example of Section \ref{ex:PH}.

\begin{rem}[Other strategies to construct the parameterized family]
As long as the state map $\CG$ is linear and orthogonal, the general form of the loss function applies, assuming the data comes from an Euler discretization of the CLE. Otherwise, the loss function needs to be rederived from the relative entropy as we did in Theorem \ref{thm:argmin}.
To construct the state map, $\CG$, one can use our pFIM approach, or other criteria for variable selection.
\end{rem}

\begin{rem}[Computational work considerations of the reduced model construction]
One of the main advantages of the reduction method presented in this work is its feasibility in terms of computational work. From the algorithmic point of view, there are three main computational steps to construct the reduced model (not including the fitting step):
\begin{enumerate}
\item Computing the pFIM \eqref{eq:pFIM} and determine the sensitive parameter set $\mathcal{P}$: As previously mentioned, in virtue of its block diagonal structure 
 this step can be achieved with a computational work roughly of the order of the number of parameters of the full model, $K$.
\item Parameter mapping: Work to construct $\CGp$ \eqref{eq:defvargamma} is of order $K$ and to construct $\CGp^{\comp,1}$ \eqref{eq:defvargamma_comp} is linear on $|\mathcal{J}_{\mathcal{P}}|$, which is of order of the number of reaction channels, $J$.
\item State variable mapping: Work to construct $\CG$ \eqref{eq:defpi} is linear on $|\mathcal{J}_{\mathcal{P}}|$ which is of order $J$. Work to construct $\CG^{\comp,1}$ \eqref{eq:defpi_comp} is linear on $|\mathcal{J}_{\mathcal{P}}|$ which is of order $J$, and requires additional storage of the order of the dimension of the state space, $d$.
\end{enumerate}
\end{rem}

\section{Numerical experiments}
\label{sec:exp}
In this section we demonstrate the applicability of our method by analyzing three models taken from the literature: A protein homeostasis network in a sloppy regime \cite{proctor11}, a Epidermal Growth Factor Receptor (EGFR) model \cite{Kholodenko99}, and a Mammalian Circadian clock model that presents dymanics dominated by oscillations  \cite{Leloup03}. Moreover, we present two stochastic pure jump models: a circadian clock model which is based on \cite{Leloup03}, and a growth factor receptor, based on \cite{Kholodenko99} The parameter values, as well as initial conditions, quantities of interest, and time horizon of the study are taken from the model database \cite{databaseURL,BioModels2010} (manually curated section). In the first case, we consider a particular regime studied in the corresponding paper, in which is possible to obtain a substantial reduction both in the number of state variables (or species) and in the number of parameters and reaction channels. 
In the second example we analyze a non-sloppy model of signalling phenomena, and finally, in the third example, we reduce a model whose dynamics are dominated by non-trivial oscillatory behaviour. Finally, in Section \ref{sec:stoc_exp} we scale the deterministic Mammalian Circadian clock model and the EGFR to obtain in each case a stochastic pure jump model. The first one is clearly not in a Langevin nor mean-field regime. The second one is still well approximated by the deterministic model. Both stochastic models exhibit non-Gaussian distributions for some of the species.
In all the examples, even though the models were carefully designed by researchers, significant model reductions are achieved.

\subsection{Matrix representations of the linear maps}

Here we briefly describe the matrix representations of $\CGp$ and $\CG$ used in the numerical experiments.
The matrix representation of $\CGp$ is constructed (row-wise) as follows. The $k$-th row is defined as
\begin{equation}
\label{eq:defvargamma}
(\varGamma)_{k,\cdot}:=\mathbf{e}_{l}\,\, \text{ for }\,\, k=1,2,...,\bar K \, ,
\end{equation}
for parameter index $l \in \mathcal{P}$ and $\mathbf{e}_l$ is the $l$-th element of the standard basis of dimension $K$, assuming 
the row index $k$ preserves the order of index $l$. Similarily, the $k$-th row of the matrix representation of the map $\CGp^{\comp,1}$ is defined as 
\begin{equation}
\label{eq:defvargamma_comp}
(\varGamma^{\comp,1})_{k,\cdot}:=\mathbf{e}_{l}\,\, \text{ for }\,\, k=1,2,...,\bar K' \, ,
\end{equation}
for each parameter index  $l \in \{1,2,...,K\}$ such that $l \not \in \mathcal{P}$ but exists $j \in \mathcal{J}_\mathcal{P}$ such that $j \in \varphi(l)$.
Finally, $\tilde c_l$ is an component of $\tilde c$ if exists $l \in \{1,2,...,K\}$, $l \not \in \mathcal{P}$ and there is no $j \in \mathcal{J}_\mathcal{P}$ such that $j \in \varphi(l)$.

Now, let $\mathcal{P}$ be a set of selected parameters, with $|\mathcal{P}|= \bar{K}$, as defined in Section \ref{sec:sel_pars}, and 
let $\mathcal{J}_{\mathcal{P}} =\bigcup_{k \in \mathcal{P}}\varphi(k)$ be the set of indices of reaction channels associated with the set of parameter indexes $\mathcal{P}$. The $k$-th row of $\CG$ is then defined as
\begin{equation}
\label{eq:defpi}
(\CG)_{k,\cdot}:=\mathbf{e}_i\,\, \text{ for } \,\, k=1,2,...,\bar d \, ,
\end{equation}
for each species index $i \in \{1,2,...,d\}$ such that
\begin{equation*}
\,\, \text{exists} \,\, j {\in} \mathcal{J}_{\mathcal{P}} \,\, \text{ for which } \,\, (\nu_{in})_{i,j} > 0 \text{ or } (\nu_{out})_{i,j} > 0 \, ,
\end{equation*}
where 
$\mathbf{e}_i$ is the $i$-th element of the standard basis of dimension $d$, assuming that the row index $k=k(i)$ preserves the order of index $i$ as in \eqref{eq:defvargamma}. We denote with $\mathcal{S}_{\mathcal{P}}$ the set of indexes of species that take part on the stoichiometry of the sensitive reaction channels $\mathcal{J}_\mathcal{P}$.

The map $\CG$ does not take into account the case in which a propensity function $a_j$ depends explicitly on a particular component of the state variable $x$, say $x_i$, and $i \not \in \mathcal{S}_{\mathcal{P}}$. 
Since the functional expression of the propensity functions of the sensitive reaction channels are preserved in the reduced model, a suitable complement, $\CG^{\comp,1}$, is defined as follows.

For each reaction channel index $ j \in \mathcal{J}_\mathcal{P}$ such that 
\begin{equation*}
\text{exists} \,\, i{\in} \{1,2,...,d\}, \,\, \text{ such that } \,\, a_j = a_j(x_i;\cdot) \,\, \text{ and } \,\, i \not \in \mathcal{S}_\mathcal{P} \, ,
\end{equation*}
define
\begin{equation}
\label{eq:defpi_comp}
(\CG^{\comp,1})_{k,\cdot}:=\mathbf{e}_i\,\, \text{ for } \,\, k=1,2,...,\bar d' \, ,
\end{equation}
where $()_{k,\cdot}$ denotes the $k$-th row of the matrix, $\mathbf{e}_i$ is the $i$-th element of the standard basis of dimension $d$, and $a_j=a_j(x_i;\cdot)$ denotes the fact that the propensity function $a_j$ is an explicit function of $x_i$. Here we assume that the row index $k=k(i)$ preserves the order of index $i$ as in \eqref{eq:defvargamma}.

\subsection{Protein homeostasis}
\label{ex:PH}
\subsubsection*{Model description}
Loss of protein homeostasis is
the common link between neuro-degeneration disorders which are characterized by the
accumulation of aggregated protein and neuronal cell death. The authors in \cite{proctor11} examined the role of
both $Hsp70$ and $Hsp90$ under three different regimes: no stress, moderate stress and high
stress. This reaction network consists of 52 species and 80 reactions with propensities being of mass-action kinetics type. The reaction constants, the initial population and time interval, $[0,T]$, $T{=}10$, are taken from \cite{proctor11} and \cite{databaseURL}. The variables represent particle counts. This model can be well approximated by a diffusion (CLE approximation \eqref{eq:lang}), since the particle count is large for every species.
The authors study a regime of \emph{no stress}, in which most of the classical sensitivity indexes as per \eqref{eq:adjoint} are close to zero. Recall that we consider that a system is in a sloppy regime when most of the information contained in the pFIM is accumulated in a reduced number of parameters (see Figure \ref{fig:PH_pFIM}). In this work, we study the Protein Homeostasis model in this regime as an example of substantial model reduction, both in the number of species and in the number of parameters and reaction channels. Every plot shown refers to the time interval $[0,T]$.

\subsubsection*{Selecting parameters and reaction channels (Step 1)}
We start this example by noticing that 10 out of 87 parameters accumulate at least $95\%$ (precisely, $96.524\%$) of the total information as per the pFIM diagonal \eqref{eq:pFIMpctg}, as shown in Figure \ref{fig:PH_pFIM}. Total information of a set of parameters is a natural and intuitive measure that allows to choose a meaningful set of parameters for information-based model reduction. 
\begin{figure}[h!]
\centering
\begin{minipage}{0.46\hsize}
	\includegraphics[width=\textwidth]{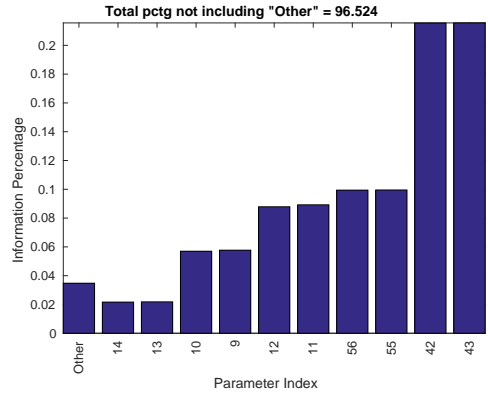}
\end{minipage}
\caption{\label{fig:PH_pFIM} Total information as per the pFIM diagonal (see \eqref{eq:pFIMpctg} and \eqref{eq:pFIMthr}). It can be seen that the parameters that represent at least $95\%$ of the total information as per the pFIM (10 parameters), contains the most sensitive parameters, as shown in Figure \ref{fig:PH_SIs}.}
\end{figure}

As a comparison, in Figure \ref{fig:PH_SIs} we plot the sensitivity indexes of the time average of the species (see \eqref{eq:time_avg}), computed by using the adjoint deterministic method (see \eqref{eq:adjoint}). The time average is considered as a pathwise quantity of interest for assessing the quality of the reduced model for replicating the full dynamics. The axis containing the parameter indexes, are sorted by using the diagonal values of the pFIM (see \eqref{eq:SB}).  The group of parameters with largest sensitivity indexes as per the adjoint method appears in the rightmost side of the plot. 

\begin{figure}[h!]
\centering
\begin{minipage}{0.54\hsize}
	\includegraphics[width=\textwidth]{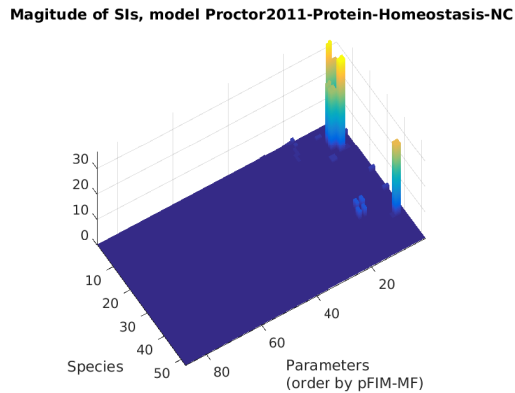}
\end{minipage}
\caption{\label{fig:PH_SIs} Sensitivity indexes for the time average of the species (see \eqref{eq:adjoint}). Parameter axis is ordered by the diagonal of the pFIM. The computational work required to compute these indexes is of the order of Parameters${\times}$Species.}
\end{figure}
By comparing both figures, we can note that the aforementioned 10 parameters, that represent more than $95\%$ of the total information as per the pFIM diagonal, include the most sensitive parameters as per the adjoint method, for the time average of the species.  This stresses the fact that by considering a large  amount of the total information as per the pFIM diagonal, sensitive parameters in the classical sense \eqref{eq:adjoint}, for the time average, is likely to be classified as pFIM sensitive. We empirically observed this fact in several examples contained in the EMBL-EBI BioModels database  \cite{databaseURL}.
Notice, however, that the two parameters that accumulate most of the information as per the pFIM diagonal (parameters $c_{42}$ and $c_{43}$) are not sensitive with respect to the time average (see the two rightmost part of the plot in Figure \ref{fig:PH_SIs}). 
Finally, we recall that in the case of high-dimensional models, the computational work required to compute the sensitivity indexes by the adjoint and related methods is usually prohibitive (see Remark \ref{rem:adjoint}). On the other hand, for computing the pFIM, a computational work of the order of the number of parameters is required (see Section \ref{sec:pFIM}). This can be readily seen by comparing Figure \ref{fig:PH_SIs} with Figure \ref{fig:PH_pFIM}.

In Figure \ref{fig:PH_graph} (left pane) we show the pathwise information geometry for the full model, that is, how the total information is structured in the network described by the stoichiometry. This plot distinguishes sensitive reaction channels (see \eqref{eq:sens_chans}). We show reaction channels and species in a circumference, where black dots depicts reaction channels and red dots, species. A link between one reaction channel and one species shows the stoichiometry structure, that is, which species takes part in the stoichiometry of which reaction channels 
where the color of the link represents the total information as per the pFIM diagonal of that particular reaction channel. That is, 
\begin{equation}
\label{eq:link_color}
j\text{-th link color} = \frac{1}{\sum_{\ell=1}^J \sum_{k \in \mathcal{K}_\ell}\xi_k} \sum_{k \in \mathcal{K}_j}\xi_k \, ,
\end{equation}
where $\xi_k= (\pFIM{P_{0:T}})_{k,k}$ (eq. \eqref{eq:dk}), and $\mathcal{K}_j:=\{k{\in} \{1,2,...,K\}:j{\in} \varphi(k)\}$. Finally, $\varphi$ is defined in \eqref{eq:varphi}.

\begin{figure}[h!]
\centering
\begin{minipage}{0.48\hsize}
	\includegraphics[width=\textwidth]{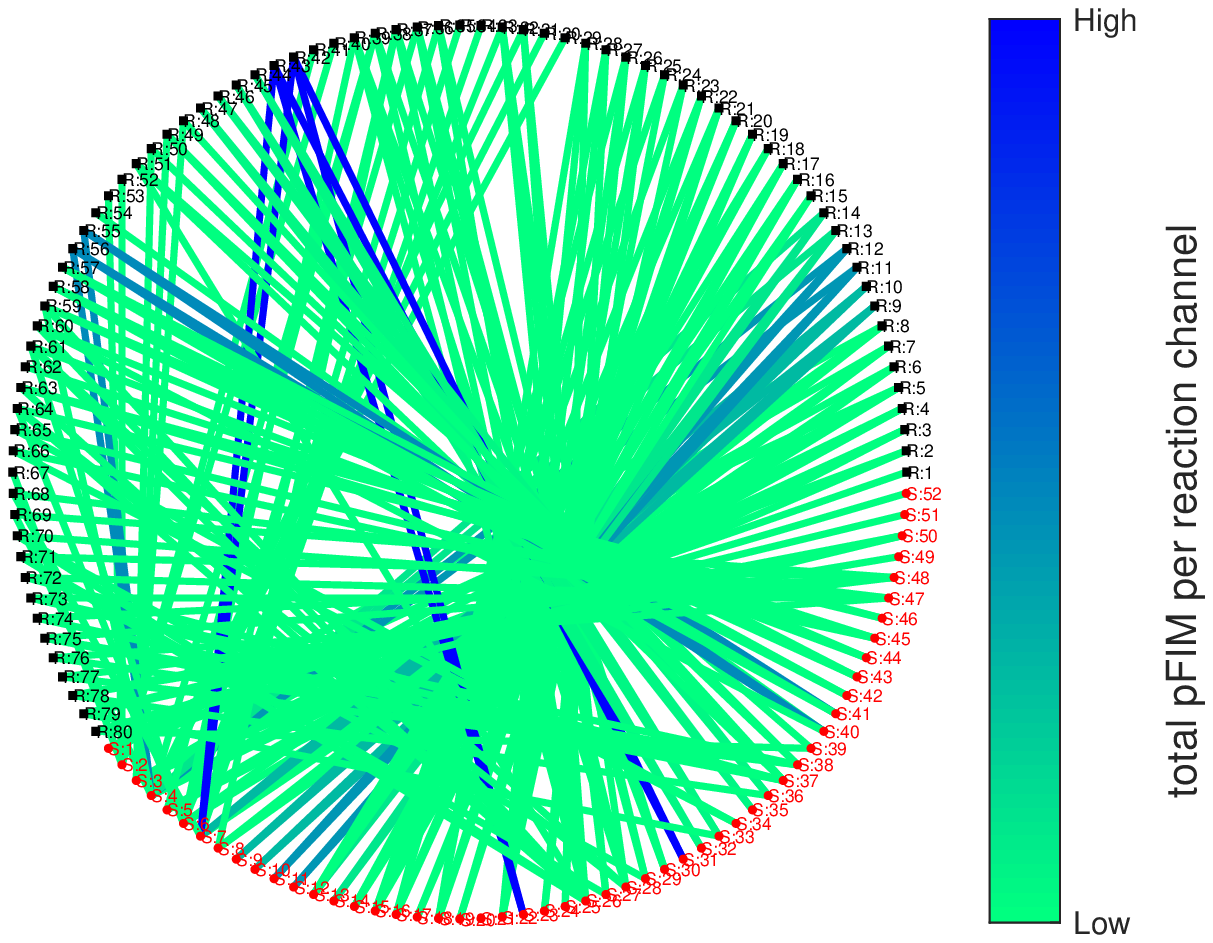}
\end{minipage}
\hfill
\begin{minipage}{0.48\hsize}
	\includegraphics[width=\textwidth]{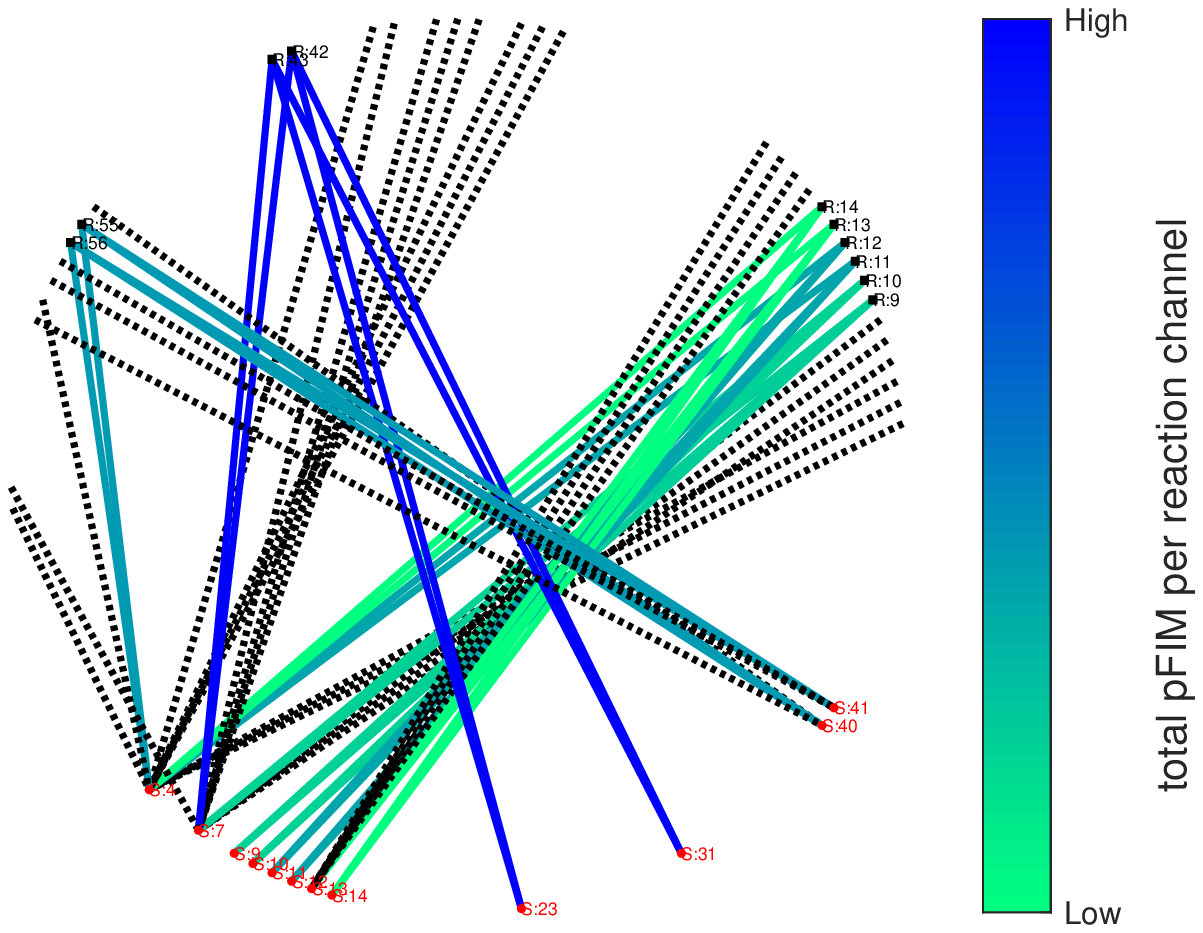}
\end{minipage}
\caption{\label{fig:PH_graph} Stoichiometry and information. Black dots: reaction channels, red dots: species. A link shows stoichiometry of each reaction channel. The color of the link shows the total information of the corresponding reaction channel (see \ref{eq:link_color}). Left: Full model. Right: Reduced model, including at least $95\%$ of the total information as per the pFIM diagonal \eqref{eq:pFIMpctg}. In the plot, species indexes are shown, while species names are given in Figure \ref{fig:PH-trajs-full-red}.}
\end{figure}

\subsubsection*{Selecting variables (Step 2)}
In Figure \ref{fig:PH_graph} (right pane) we show only the sensitive reaction channels \eqref{eq:sens_chans} and species \eqref{eq:sens_spec}, for $\kappa=0.95$. Dotted lines shows, for the sensitive species (i.e., included in the reduced model), the stoichiometry relations with reaction channels not included in the reduced model. 
For example, species $S_4$ (see right pane of Figure \ref{fig:PH_graph}) takes part in the stoichiometry of reaction channel $j{=}4$ (R4 in Figure \ref{fig:PH_graph}) but this reaction channel is not included in the reduced model because contains no pFIM-sensitive parameters.

In this plot we can identify an almost isolated sensitive subsystem in the stoichiometry of this network. That is, this network has a few sensitive reaction channels that affect only a few species, and those species are mainly affected by those reaction channels. This latter is shown by dotted black links in right pane of Figure \ref{fig:PH_graph}).

Species $Hsp70$ and $Hsp90$ ($S_7$ and $S_4$ respectively, see Figure \ref{fig:PH-trajs-full-red}), which are the main quantities of interest measured in \cite{proctor11}, are already chosen in the first iteration of our method for $\kappa{=}0.95$. This is because both species take part in the stoichiometry of a sensitive reaction channel for the specified $\kappa$. Note that any particular species that may be of interest, even if it is sensitive, can be also included in the reduction procedure presented in this work. 

\subsubsection*{Reduced model (Step 3 and Step 4)}
In Figure \ref{fig:PH-trajs-full-red} we compare in a log scale the mean-field trajectories of the species included in the reduced model for $\kappa{=}0.95$, and the respective mean-field trajectories of the same species in the full model. We also show a comparison between the time average on $[0,T]$ of the full model versus the corresponding reduced one. Names of the species included in the reduced model are given in both plots of Figure \ref{fig:PH-trajs-full-red}.

To visually compare how the amount of information as per the pFIM included in the reduced model affects the resulting mean-field trajectories, in Figure \ref{fig:PH-93} we show, as in Figure \ref{fig:PH-trajs-full-red}, a comparison of the mean-field trajectories of the full model but in this case versus a reduced model for $\kappa{=}0.93$. It can be  clearly seen that the trajectory of one of the species in the reduced model has very different dynamics than in the full model. This explains why the information loss and the pathwise distance are much larger than the case with $\kappa{=}0.95$ (see Table \ref{tab:PH}).

\begin{figure}[h!]
\begin{minipage}{0.48\hsize}
	\includegraphics[width=\textwidth]{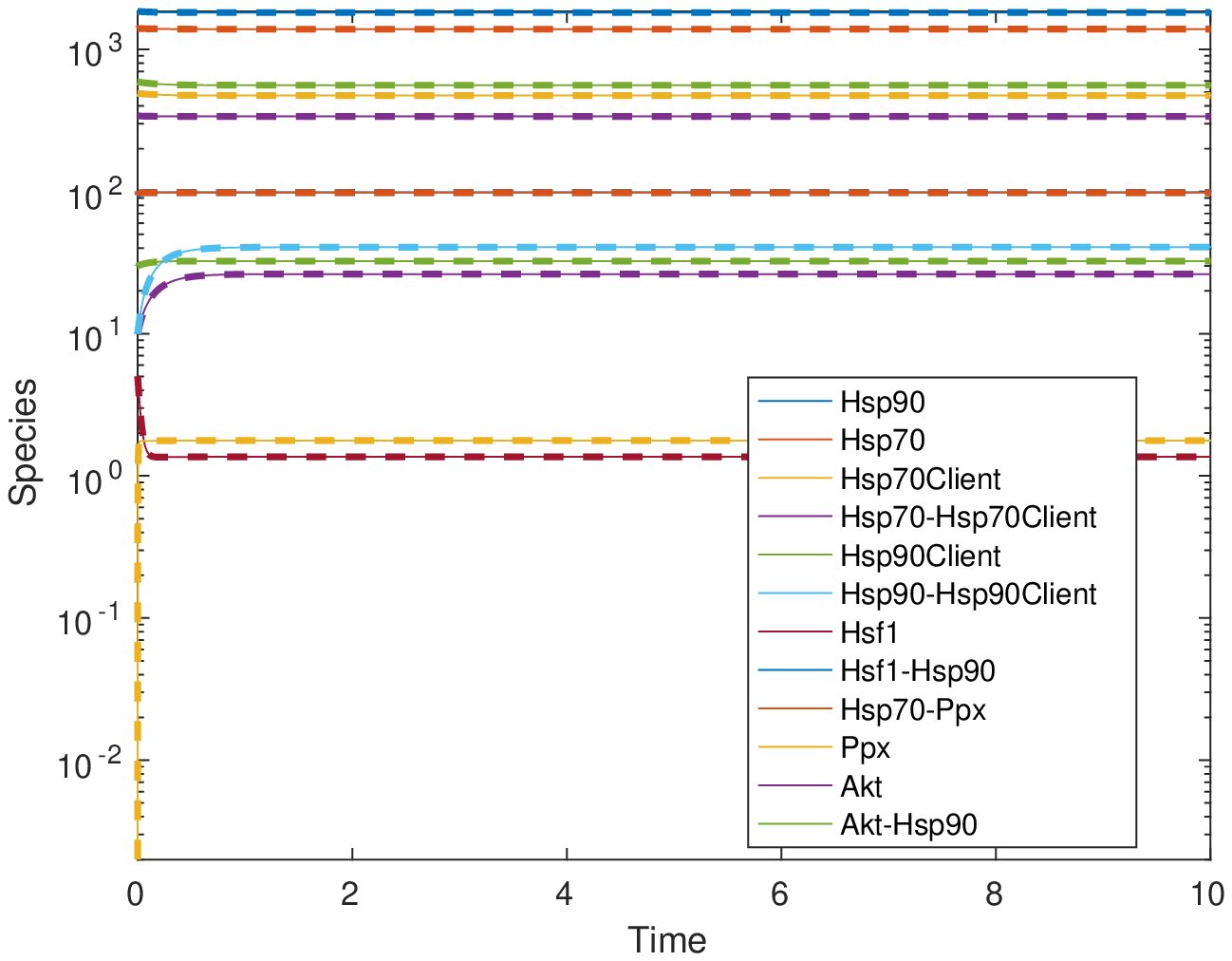}
\end{minipage}
\hfill
\begin{minipage}{0.48\hsize}
	\includegraphics[width=\textwidth]{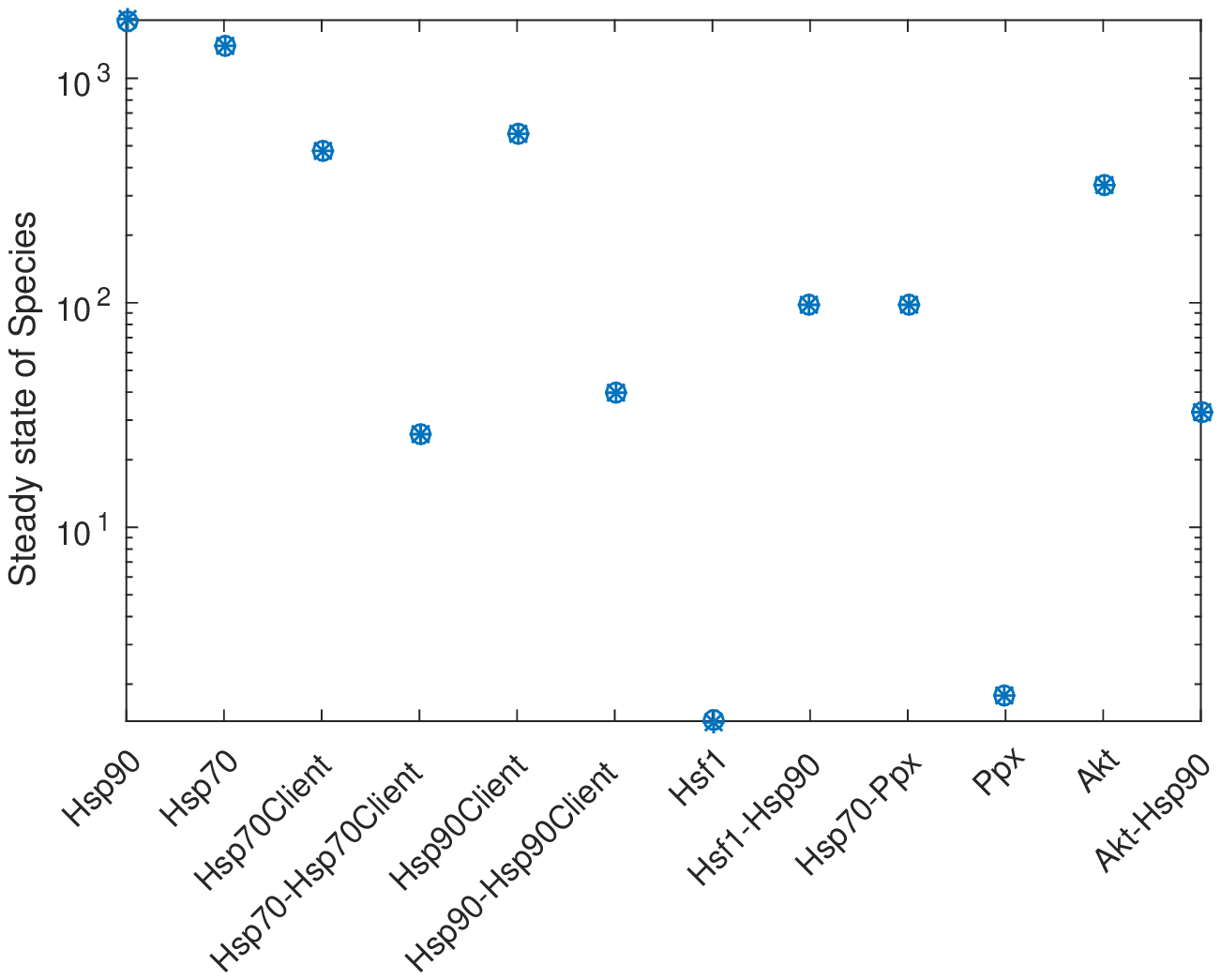}
\end{minipage}
\caption{\label{fig:PH-trajs-full-red} Left: Mean-field trajectories of the species indexed by $\mathcal{S}_\mathcal{P}$. Solid lines correspond to the reduced model and thick dashed lines to the full model at $95\%$ of total information. Right: Time average of species in the reduced model (circle), vs. the full model (star) at $95\%$ of total information as per the pFIM diagonal.}

\end{figure}
\begin{figure}[h!]
\begin{minipage}{0.48\hsize}
	\includegraphics[width=\textwidth]{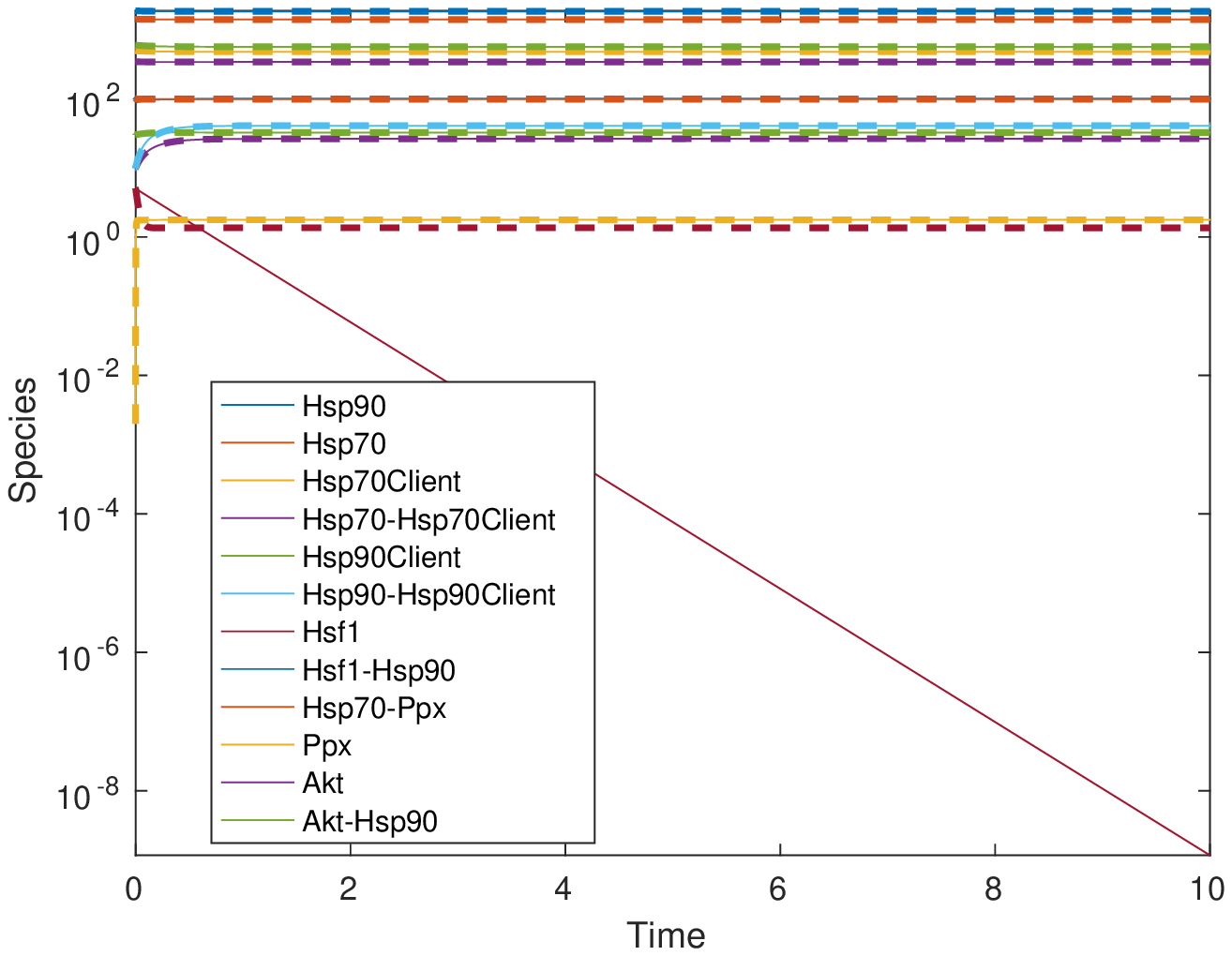}
\end{minipage}
\hfill
\begin{minipage}{0.48\hsize}
	\includegraphics[width=\textwidth]{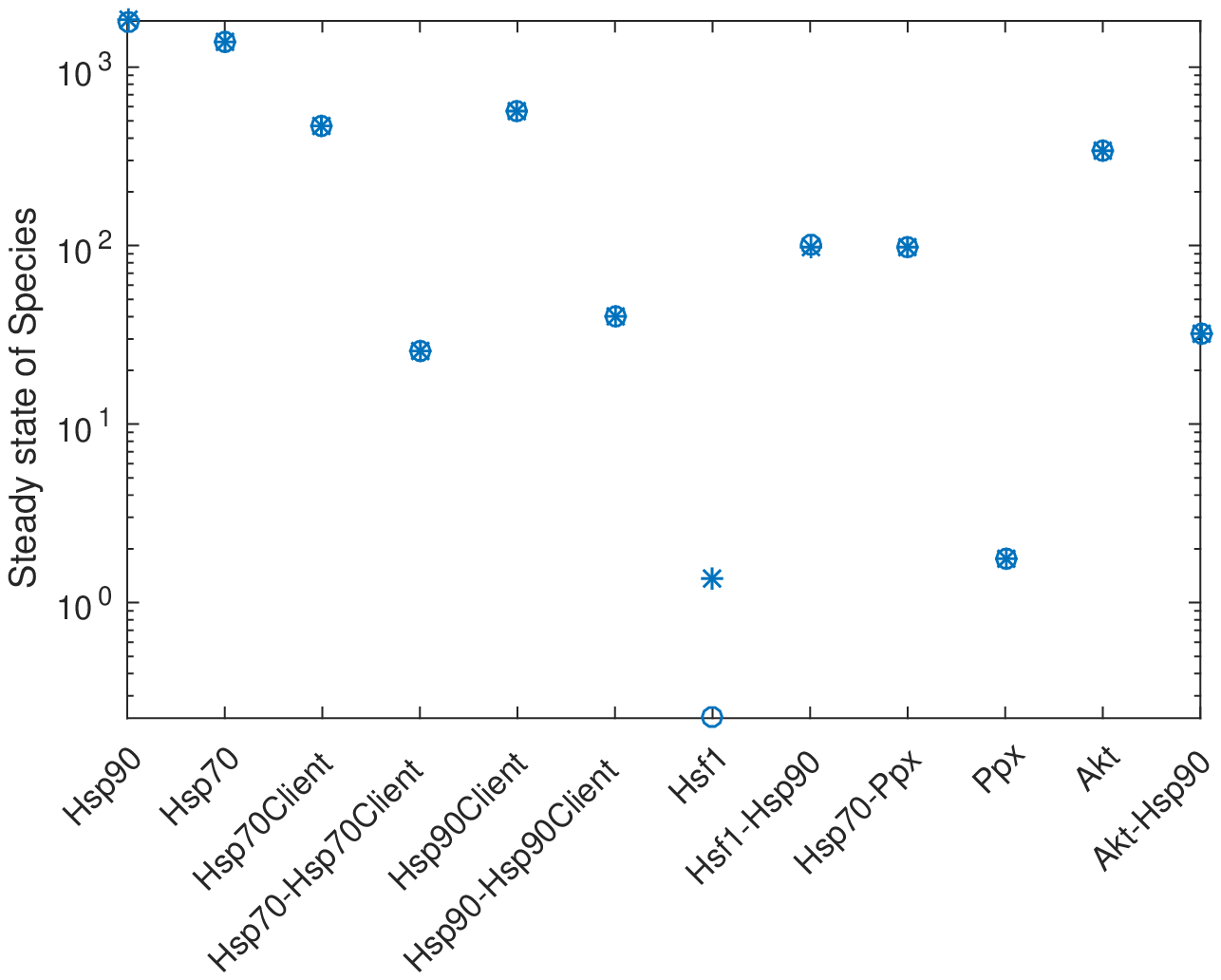}
\end{minipage}
\caption{\label{fig:PH-93} Mean-field trajectories and time average of species indexed by $\mathcal{S}_\mathcal{P}$, at $93\%$ of total information. Dynamics of species $Hsf1$ is different in the reduced model. Compare the trajectories versus a reduced model with $95\%$ if total information (Figure \ref{fig:PH-trajs-full-red}).}
\end{figure}

In Table \ref{tab:PH} we show different reduced models, according to total information as per the pFIM (first column). In the second, third and fourth columns we show the number of reaction channels, parameters and species in the reduced model respectively. Column ``Loss'' shows the value of the loss function \eqref{eq:losse} at which $\theta^*$ is achieved, by numerically solving problem \eqref{eq:opt_e}. 
When the information as per the pFIM diagonal increases from $95\%$ to $97\%$, the loss function at the optimal value also increases. 
Notice that the loss function value  decrease (by a large factor) when the minimum total information to keep increases from $93\%$ to $95\%$. This is due to the fact that an additional reaction channel is included in the reduced model while preserving the same number of species. This suggests that this reaction channel is crucial to correctly represent the dynamics of those species.

\begin{table}[h!]
\centering
\begin{footnotesize}\begin{tabular}{|c|c|c|c|c|}
\hline \rule{0pt}{2.6ex}
\textbf{pFIM \% \eqref{eq:pFIM}}&$\bar{J}$&$\bar{K}$&$\bar{d}$&\textbf{Loss \eqref{eq:losse}}\\\hline
93&9&9&12&194.87\\\hline
95&10&10&12&0.0168709\\\hline
97&11&11&15&6.95971\\\hline
99&13&13&18&6.97222\\\hline
\end{tabular}
\end{footnotesize}
\caption{\label{tab:PH} Different reduced models as per total diagonal pFIM information. Observe the non-monotone behaviour of the loss function \eqref{eq:losse} as the number of species increase. The trajectories associated with the first two models ($93\%$ and $95\%$) are shown in Figure \ref{fig:PH-93} and Figure \ref{fig:PH-trajs-full-red} respectively.}
\end{table}

\subsubsection*{Validation (Step 5)}
In this step we assess the quality of the trajectories generated by different reduced models, and we compute two validation distances. In order to perform a  detailed comparison, in table \ref{tab:PHp} we show different models as per the total number of parameters kept (ordered by total information as per the pFIM diagonal). Column ``path-dist'' shows the following pathwise distance
\begin{equation}
\label{eq:pdist}
\max_{i \in \mathcal{O}} \, \sup_{t \in [0,T]} \frac{\abs{z_i(t)-\bar z_i(t)}}{z_i(t)} \, ,
\end{equation}
where $\mathcal{O}$ is a fixed set of species, $z$ denotes the full model mean-field trajectories, and $\bar z$ the mean-field trajectories generated with the reduced model. By means of fixing the set of species, we can compare reduced models with different number of species. If $z_i(t)=0$ then the quotient is defined to be equal to $\bar z_i(t)$.
Finally, column ``SS-dist'' shows the following time average distance 
\begin{equation}
\label{eq:sdist}
\max_{i \in \mathcal{O}} \frac{\abs{z_{0:T,i}- \bar z_{0:T,i}}}{ z_{0:T,i}} \, ,
\end{equation}
where $\mathcal{O}$ is a fixed set of species, $z_{0:T}$ is the time average $z_{0:t}:=\frac{1}{T}\int_0^T z(s) ds$ of the full model mean-field, and $\bar z_{0:T}$ its corresponding counterpart in the reduced model.

It is important to note that, as the total information increases ($93\% \rightarrow 95\% \rightarrow 97\% \rightarrow 99\%$), reduced models may have additional species. In order to compare them, in Table \ref{tab:PHp} we restrict the comparison to the set of species of the model that has 10 parameters (line 4 in the table).

\begin{table}[h!]
\centering
\begin{footnotesize}\begin{tabular}{|c|c|c|c|c|}
\hline \rule{0pt}{2.6ex}
$\bar{J}$&$\bar{K}$&$\bar{d}$&\textbf{path-dist \eqref{eq:pdist}}&\textbf{SS-dist \eqref{eq:sdist}}\\\hline
7&7&10&0.530&0.128\\\hline
8&8&10&0.041&0.002\\\hline
9&9&12&0.034&0.005\\\hline
10&10&12&0.002&0.001\\\hline
\end{tabular}
\end{footnotesize}
\caption{\label{tab:PHp} Different reduced models as per total number of parameters in the reduced models. Distances shown in the last two columns are given in \eqref{eq:pdist} and \eqref{eq:sdist} respectively, where $\mathcal{O}$ is given by the set of species associated with the model that has 10 parameters (line 4 in the table). In such a way, we compare for the same species trajectories.}
\end{table}

\subsubsection*{Iteration (Step 6)}
We first point out that, a first model selection is carried out by by running the reduction procedure with different information thresholds (see Table \ref{tab:PH} and \ref{tab:PHp}).
In this section we show a further strategy to improve a given reduced model.
Suppose that it is required by the user to keep in the reduced model at least $99\%$ of the total information. This reduced model, obtained by applying Steps 1-5, is the one shown in last line of Table \ref{tab:PH}). By inspecting the distance between the full model mean-field trajectories versus the reduced ones, we can observe that the reduced model dynamics of species Jnk-P (species index 33) is departing from the full dynamics (see Figure \ref{fig:PH-ss-99}). 
\begin{figure}[h!]
\begin{minipage}{0.48\hsize}
	\includegraphics[trim=14pt 0pt 32pt 20pt,clip=true,width=\textwidth]{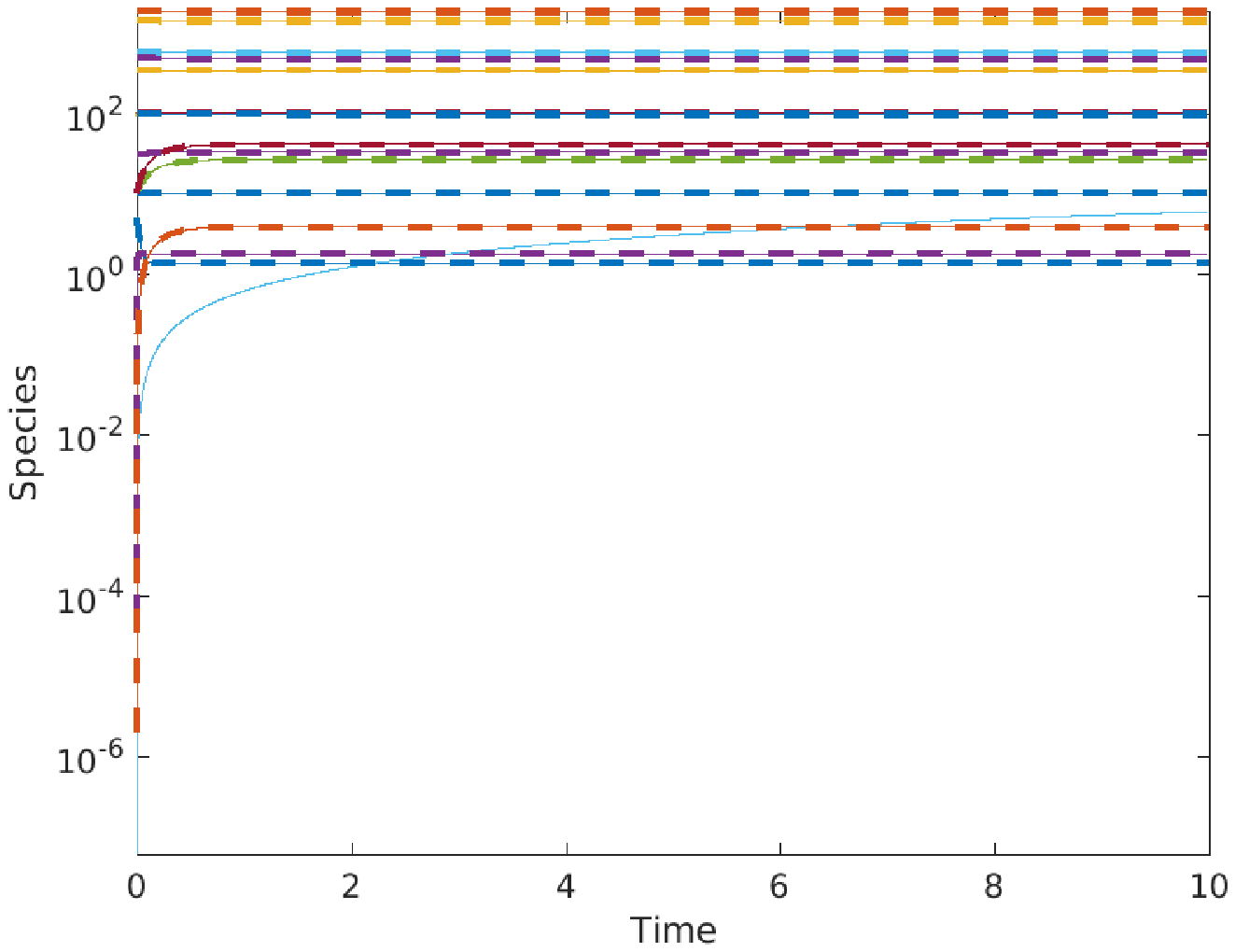}
\end{minipage}
\hfill
\begin{minipage}{0.48\hsize}
	\includegraphics[width=\textwidth]{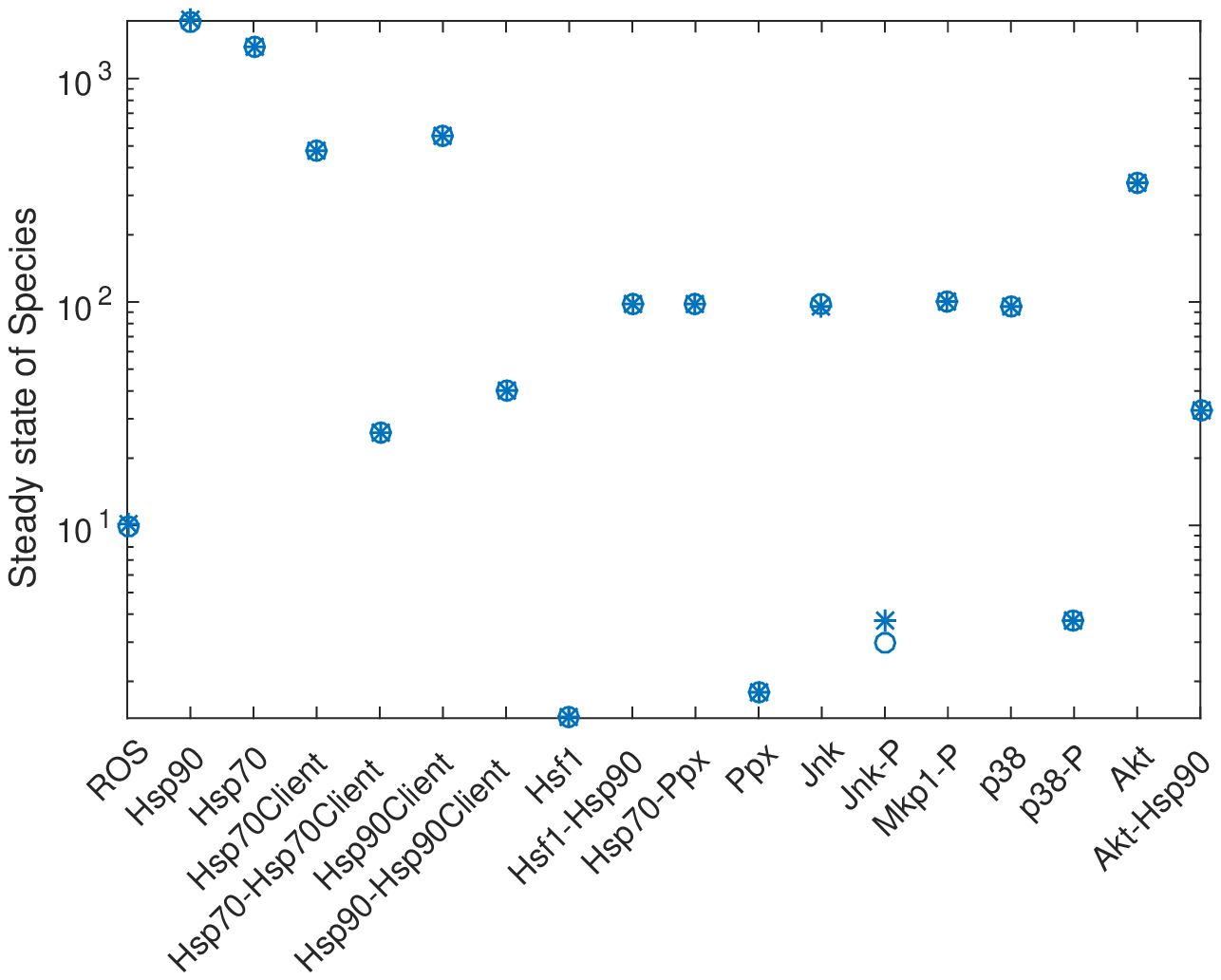}
\end{minipage}
\caption{\label{fig:PH-ss-99} Mean-field trajectories and approximate steady state of species indexed by $\mathcal{S}_\mathcal{P}$, at $99\%$ of total information (see last line of Table \ref{tab:PH}). Dynamics of species Jnk-P is different in the reduced model. Compare this reduced model vs the one at $95\%$ of the total information in Figure \ref{fig:PH-trajs-full-red}. Trajectories of species previously included in that model are now correctly represented, but new species emerge. }
\end{figure}
To improve this model, as described in Section \ref{sec:augment}, we further include in the set of selected reaction channels $\mathcal{J}_\mathcal{P}$ all reaction channels in which species Jnk-P affects the stoichiometry. In Table \ref{tab:PH2} we show the results. Two reaction channels are then added, with indexes $j{=}45$ and $j{=}79$, with stoichiometry 
$$
S_{33} + S_{34} \xrightarrow{c_{45}} S_{32} + S_{34} \, , \qquad S_{33} + S_{33} \xrightarrow{c_{79}} S_{50} + S_{51} \, .
$$
Since parameters $c_{45}$ and $c_{79}$ are not pFIM-sensitive (and that is why these two reactions were not included in the original reduced model, plotted in Figure \ref{fig:PH-ss-99}), they will be included in the reduced model as constants and not parameters. The number of species remains unchanged. This new augmented model is shown in Table \ref{tab:PH2}. Notice the large drop in the information loss with respect to the information loss given in the last row of Table \ref{tab:PH}.
\begin{table}[h!]
\centering
\begin{footnotesize}\begin{tabular}{|c|c|c|c|c|c|c|}
\hline \rule{0pt}{2.6ex}
\textbf{pFIM \%}&$\bar{J}$&$\bar{K}$&$\bar{d}$&\textbf{Loss \eqref{eq:losse}}&\textbf{path-dist \eqref{eq:pdist}}&\textbf{SS-dist \eqref{eq:sdist}} \\\hline
99&15&13&18&0.0203601&0.002&0.001\\\hline
\end{tabular}
\end{footnotesize}
\caption{\label{tab:PH2} Improved reduced model at $99\%$ of total information. Notice the large drop in the information loss with respect to the information loss given in the last row of Table \ref{tab:PH}}
\end{table}


\subsubsection*{Discussion}
This example presents a prototypical case of a model in a regime where significant reductions are possible. By considering the parameters that accumulate at least $95\%$ of the total information as per \eqref{eq:pFIMpctg}, we observe a substantial reduction in terms of species and reaction channels while controlling the information loss and obtaining virtually identical mean-field trajectories. Only 12 out of 52 species are kept in the reduced model, and 10 out of 80 reaction channels. The number of parameters decreased from 87 to 10 (see Table \ref{tab:PH}).

We also presented the case that, for a given information threshold, there exists species for which the reduced model mean-field trajectories does not replicate the corresponding ones of the full model (see Figure \ref{fig:PH-ss-99}). We also showed how to improve such a reduced model iteratively (see Table \ref{tab:PH2}). In that respect, we demonstrated that iteration and validation components of our method are relevant for a meaningful reduction. 


\subsection{Mammalian circadian clock model}
\label{ex:circ}

\subsubsection*{Model description}
This reaction network consists of 16 species, 52 reactions and 52 parameters, the initial population and time interval, $[0,T]$, $T{=}72$, are taken from \cite{Leloup03} and \cite{databaseURL}. The variables in this model represent concentrations of species. The purpose of the original authors is to present a \emph{deterministic} model for the mammalian circadian clock.
This model presents oscillatory behavior in most of its species. 

\subsubsection*{Selecting parameters and reaction channels (Step 1)}
We start this example by noticing that 35 out of 52 parameters acccumulate at least $95\%$ (precisely, $95.051\%$) of the total information as per the pFIM diagonal \eqref{eq:pFIMpctg}, i.e., $\kappa{=}0.95$, as shown in Figure \ref{fig:CIRC_pFIM}.

\begin{figure}[h!]
\centering
\begin{minipage}{0.46\hsize}
	\includegraphics[width=\textwidth]{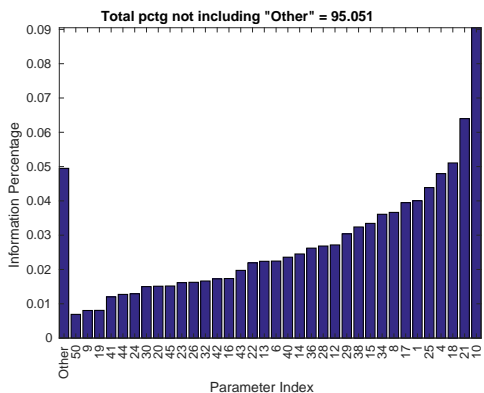}
\end{minipage}
\caption{\label{fig:CIRC_pFIM} Total information as per the pFIM diagonal \eqref{eq:pFIMpctg}. Notice that 35 out of 52 parameters accumulate at least $95\%$ (precisely, $95.051\%$) of the total information as per the pFIM diagonal.}
\end{figure}


\begin{figure}[h!]
\centering
\begin{minipage}{0.48\hsize}
	\includegraphics[width=\textwidth]{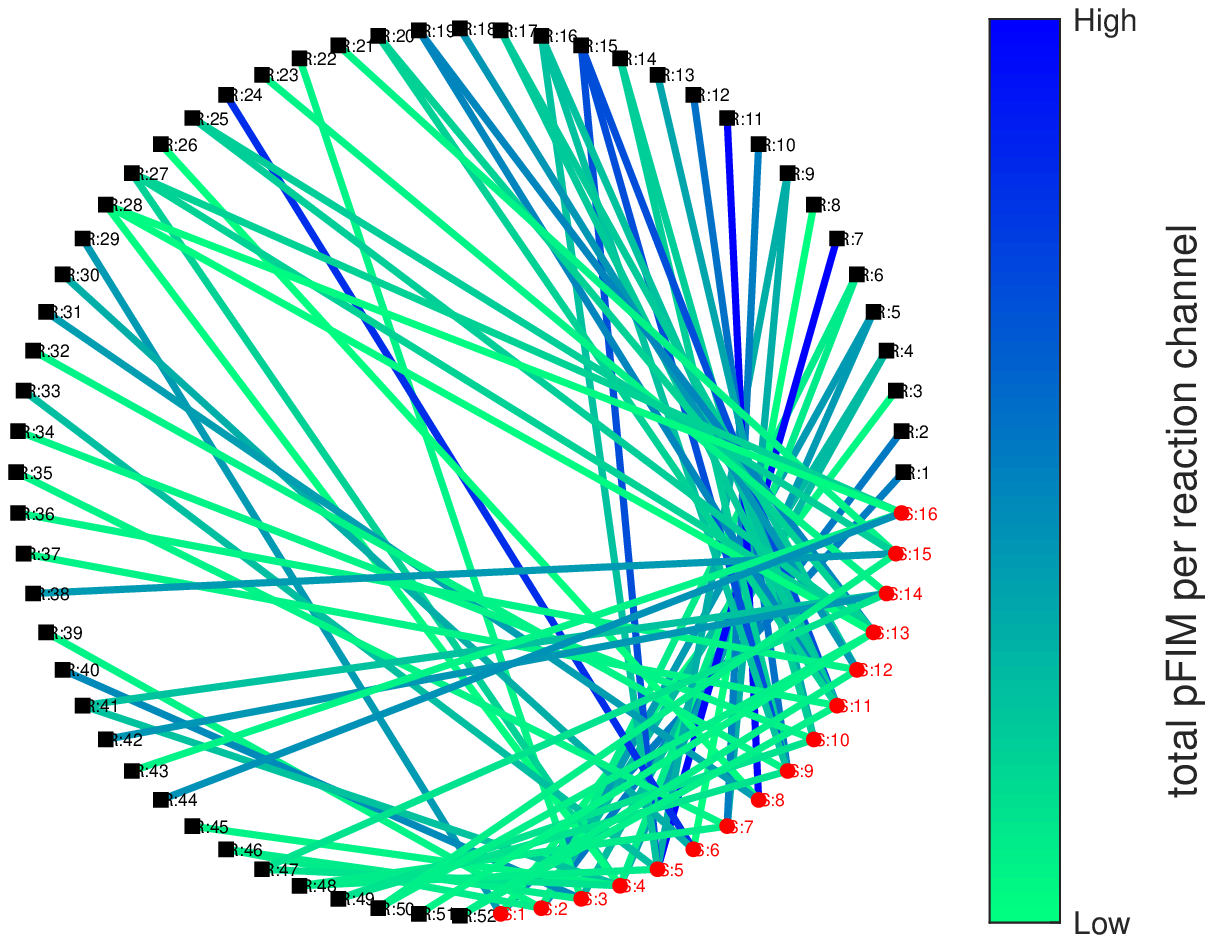}
\end{minipage}
\hfill
\begin{minipage}{0.48\hsize}
	\includegraphics[width=\textwidth]{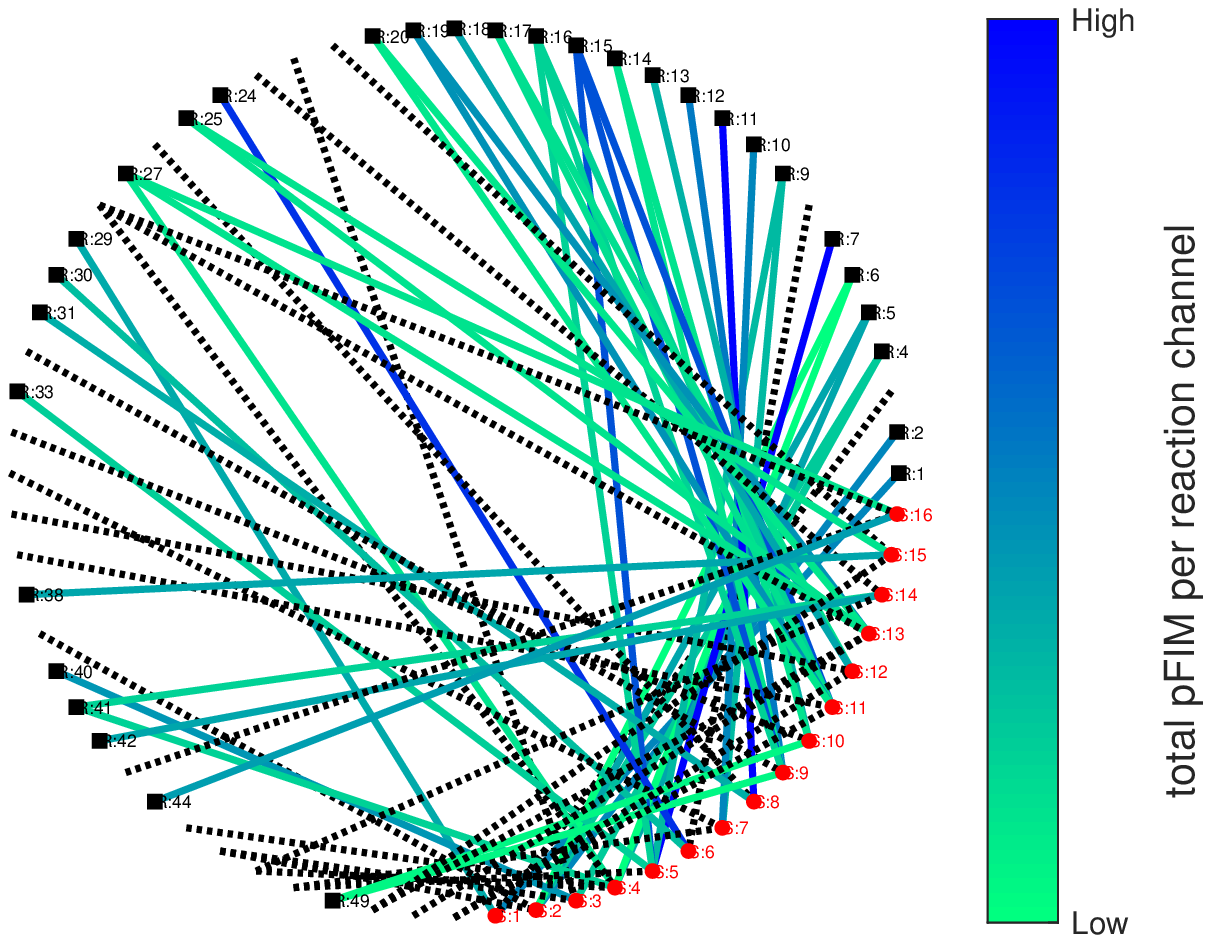}
\end{minipage}
\caption{\label{fig:CIRC_graph} Stoichiometry and information. Black dots: reaction channels, red dots: species. A link shows stoichiometry of each reaction channel. The color of the link shows the total pFIM of the corresponding reaction channel (see \ref{eq:link_color}). Left: Full model. Right: Reduced model, including at least $95\%$ of the total information as per the pFIM diagonal \eqref{eq:pFIMpctg}. In the plot, species indexes are shown, while species names are given in Figure \ref{fig:CIRC-trajs-full-red}.}
\end{figure}

\subsubsection*{Selecting variables (Step 2)}
In Figure \ref{fig:CIRC_graph} (right pane) we show the sensitive reaction channels \eqref{eq:sens_chans} and corresponding species \eqref{eq:sens_spec}, at $\kappa{=}0.95$. Dotted lines shows, for the selected species (i.e., included in the reduced model), the stoichiometry relations with reaction channels considered not sensitive and therefore not included in the reduced model. Some sparsity can be observed in the stoichiometry of this reaction network.

\subsubsection*{Reduced model (Step 3 and Step 4)}
In Figure \ref{fig:CIRC-trajs-full-red} we compare the mean-field trajectories of the species included in the reduced model at $95\%$ of information, and the respective mean-field trajectories of the same species in the full model. We also show a comparison between the time average of the full model versus the corresponding one of the reduced model at $\kappa{=}0.95$. Names of the species included in the reduced model are given in both plots of Figure \ref{fig:CIRC-trajs-full-red}.

\begin{figure}[h!]
\begin{minipage}{0.48\hsize}
	\includegraphics[width=\textwidth]{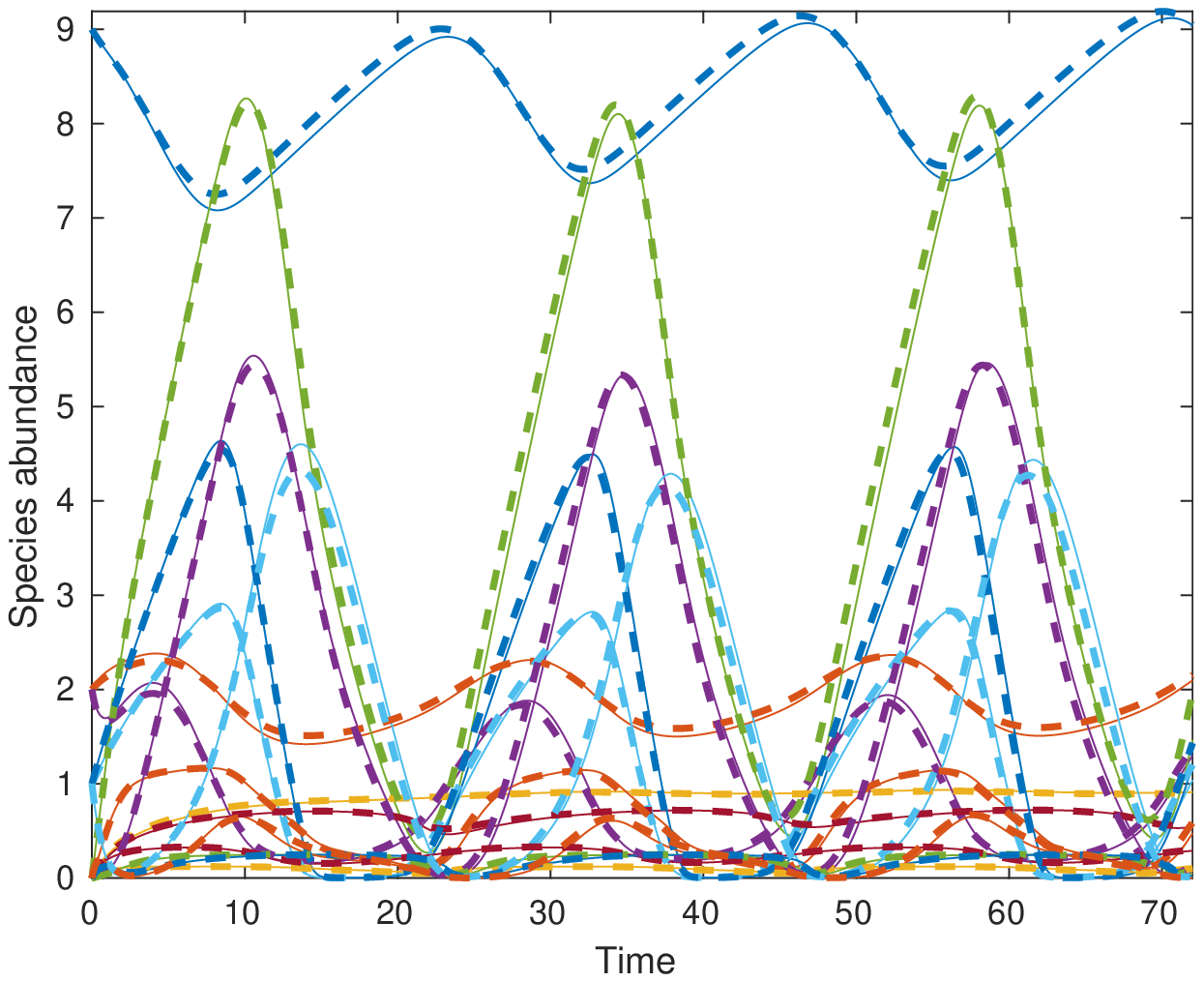}
\end{minipage}
\hfill
\begin{minipage}{0.48\hsize}
	\includegraphics[width=\textwidth]{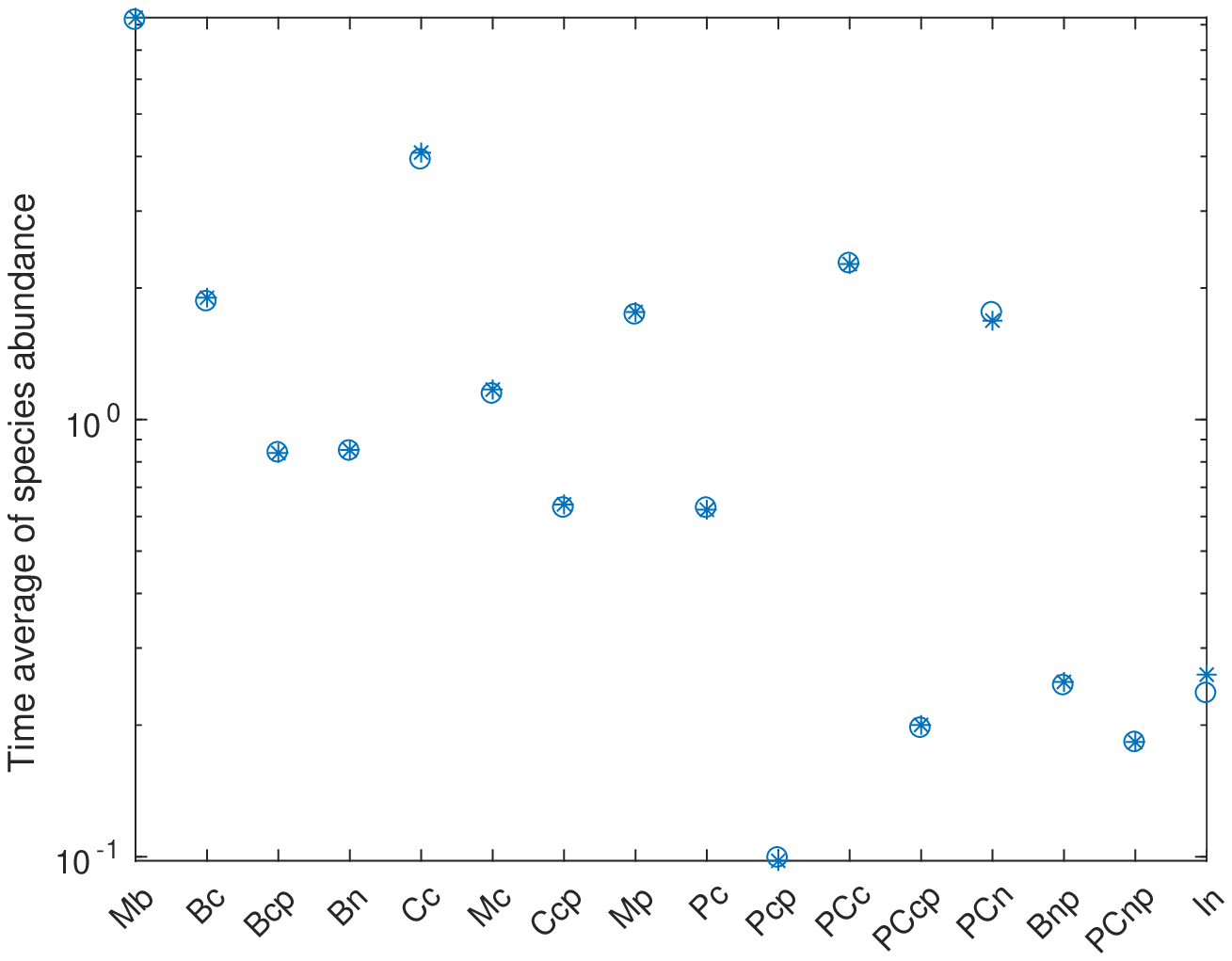}
\end{minipage}
\caption{\label{fig:CIRC-trajs-full-red} Left: Mean-field trajectories of species indexed by $\mathcal{S}_\mathcal{P}$ \eqref{eq:sens_spec}. Solid lines correspond to the reduced model and thick dashed lines to the full model. Right: Approximation of the mean-field steady state of species in the reduced model (circle), vs. the full model (star), for $t\in [0,T]$ at $95\%$ of total information.}
\end{figure}

\subsubsection*{Validation (Step 5)}
In Table \ref{tab:CIRC} we show different reduced models, according to total pFIM information. Every reduced model includes all 16 species, so every reduced model is already  comparable in terms of the validation distances. Notice there are at least two candidate models. At 97\% of total information we have virtually the same trajectories (not shown in the figures) but only 7 out of 52 reactions channels and 13 out of 52 parameters are eliminated. At 95\%, 21 out of 52 reaction channels are reduced and 17 parameters out of 52  are reduced but the mean-field trajectories of the reduced model seems out of phase with respect to the ones of the full model.

%

\begin{table}[h!]
\centering
\begin{footnotesize}\begin{tabular}{|c|c|c|c|c|c|c|}
\hline \rule{0pt}{2.6ex}
\textbf{pFIM \% \eqref{eq:pFIM}}&$\bar{J}$&$\bar{K}$&$\bar{d}$&\textbf{Loss \eqref{eq:losse}}&\textbf{path-dist \eqref{eq:pdist}}&\textbf{SS-dist \eqref{eq:sdist}}\\\hline
93&29&33&16&0.0109498&4.506&0.267\\\hline
95&31&35&16&0.000820265&0.597&0.083\\\hline
97&45&39&16&0.000381857&0.082&0.040\\\hline
99&49&45&16&0.000172741&0.102&0.019\\\hline
\end{tabular}
\end{footnotesize}
\caption{\label{tab:CIRC} Different reduced models as per total pFIM information. Since the number of species in every model is the same, the validation distances of the last two columns are applied to the same set of species. Notice the monotone decrease of the loss function.}
\end{table}

\subsubsection*{Discussion}
This model presents non-trivial oscillatory behaviour and there is no reduction on the number of species. Despite of this, the number of reaction channels and parameters are still reduced from 52 to 31 and from 52 to 35 respectively, by considering $95\%$ of the total information as per the pFIM diagonal (see Table \ref{tab:CIRC}). The mean-field trajectories of this reduced model reasonably replicates the original ones (see Figure \ref{fig:CIRC-trajs-full-red}). Notice, however, a phase shift in some of its trajectories. Increasing the total pFIM to $97\%$, we obtain superior replication of the trajectories, but more parameters and reaction channels are kept in the reduced model.

\subsection{Epidermal Growth Factor Receptor (EGFR) model}
\label{ex:EGFR}
\subsection*{Model description}

This example is a well-studied model that describes signaling phenomena of mammalian cells, regulating its growth, survival and proliferation playing a crucial role in many biological processes. 
This reaction network consists of 24 species, 47 reaction channels and 50 parameters. The variables account for species concentrations. It has mass-action kinetics type and Michaelis-Menten approximation type of propensities. This model of signalling phenomena has a transient regime that corresponds to the time interval $[0, 50]$ and also a stationary regime for $T{>}50$. Here we consider the transient regime and the reaction constants, the initial population  are taken from \cite{Kholodenko99}.

\subsubsection*{Selecting parameters and reaction channels (Step 1)}
We start the analysis by noticing that that 25 out of 50 parameters accumulate at least $97\%$ (precisely, $97.244\%$) of the total information as per the pFIM \eqref{eq:pFIMpctg}, as shown In Figure \ref{fig:EGFR_pFIM}.

\begin{figure}[h!]
\centering
\begin{minipage}{0.46\hsize}
	\includegraphics[width=\textwidth]{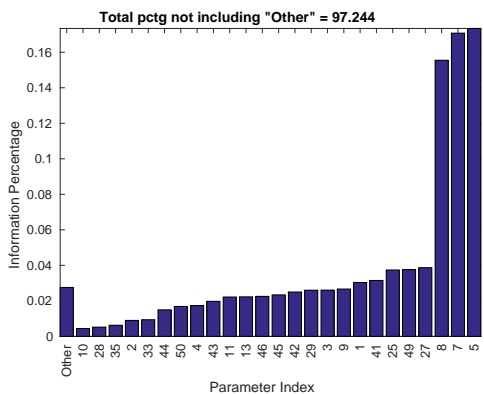}
\end{minipage}
\caption{\label{fig:EGFR_pFIM} Total information as per the pFIM. Notice that 25 out of 50 parameters acccumulate $97.244\%$ of the total information as per the pFIM diagonal (see \eqref{eq:pFIMpctg}).}
\end{figure}

%
\begin{figure}[h!]
\centering
\begin{minipage}{0.48\hsize}
	\includegraphics[width=\textwidth]{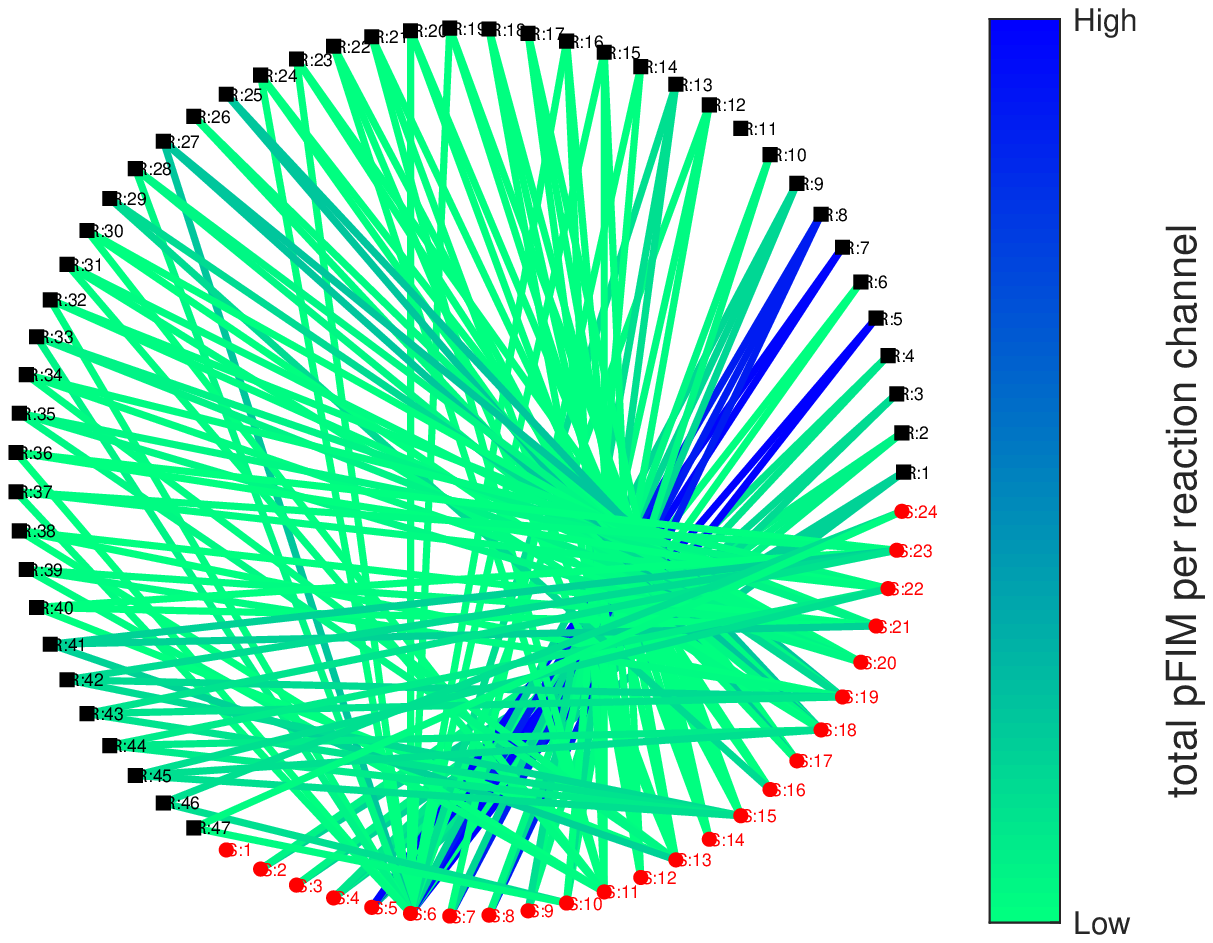}
\end{minipage}
\hfill
\begin{minipage}{0.48\hsize}
	\includegraphics[width=\textwidth]{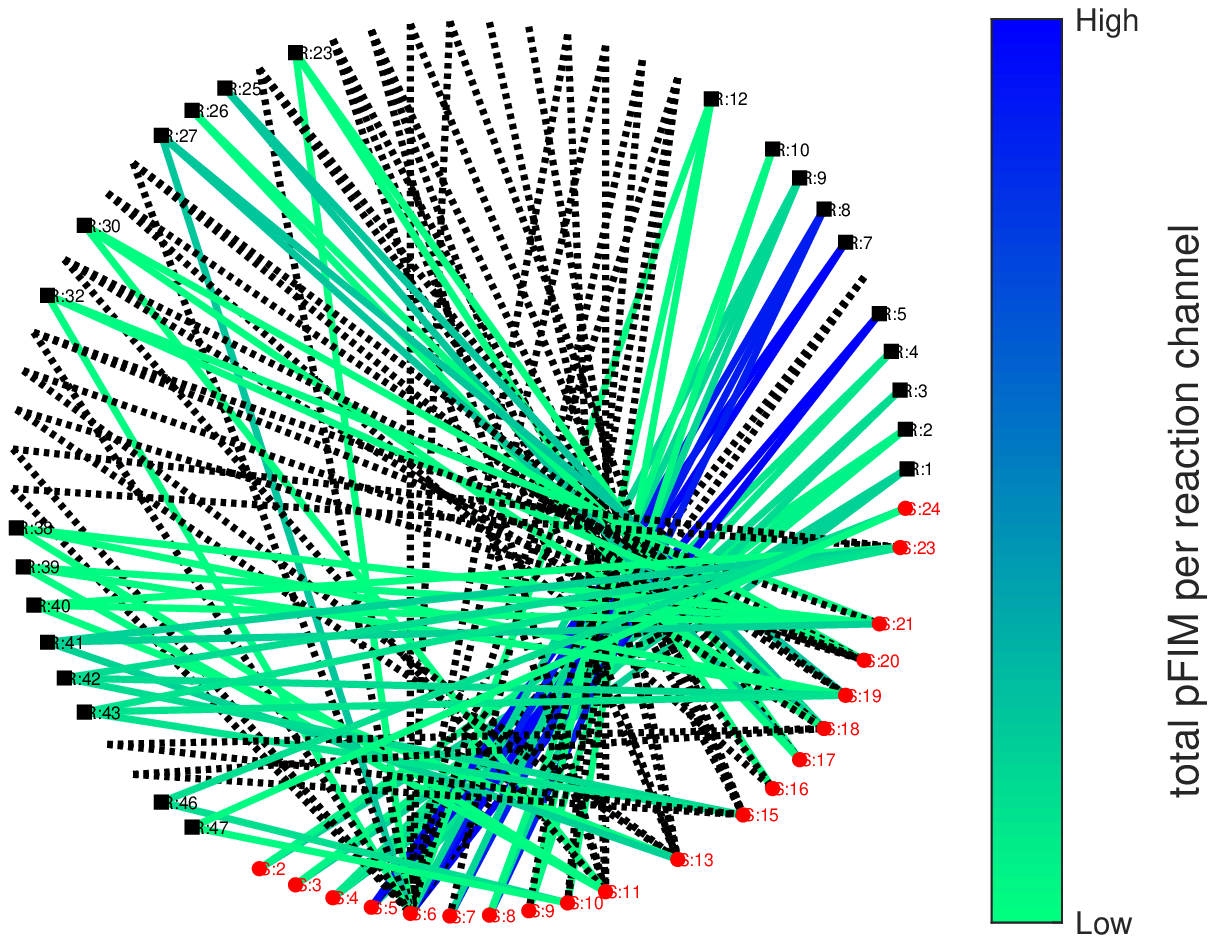}
\end{minipage}
\caption{\label{fig:EGFR_graph} Stoichiometry and information. Black dots: reaction channels, red dots: species. A link shows stoichiometry of each reaction channel. The color of the link shows the total pFIM of the corresponding reaction channel (see \ref{eq:link_color}). Left: Full model. Right: Reduced model, which includes at least $97\%$ of the total information as per the pFIM diagonal \eqref{eq:pFIMpctg}. In the plot, species indexes are shown, while species names are given in Figure \ref{fig:EGFR-trajs-full-red}.}
\end{figure}

\subsubsection*{Selecting variables (Step 2)}
In Figure \ref{fig:EGFR_graph} (right pane) we show the sensitive reaction channels \eqref{eq:sens_chans} and the corresponding species \eqref{eq:sens_spec}, that represent at least $97\%$ of total pFIM diagonal information. Dotted lines shows, for the selected species (i.e., associated with sensitive reaction channels), the stoichiometry relations with reaction channels not considered sensitive, and therefore, not included in the reduced model.  We can observe that many reaction channels not considered sensitive affect many selected species. In principle, for a fixed $\kappa$ this may affect the quality of the reduced model, since many reactions that may increase or decrease the populations/concentrations of the selected species are not included in the reduced model. As seen in Figure \ref{fig:EGFR-trajs-full-red}, those missing reaction channels seems to be non essential. 

\subsubsection*{Reduced model (Step 3 and Step 4)}
In Figure \ref{fig:EGFR-trajs-full-red} we compare the mean-field trajectories of the species included in the reduced model at $\kappa{= }0.97$, and the respective mean-field trajectories of the same species in the full model. We also show a comparison for the time average on $[0,T]$, of the full model versus the reduced model at $\kappa{=}0.97$. Most of the trajectories and time averages are correctly represented in the reduced model. The validation distances \eqref{eq:pdist} and \eqref{eq:sdist}, which are properly normalized, are shown in Table \ref{tab:EGFR}.

\begin{figure}[h!]
\begin{minipage}{0.48\hsize}
	\includegraphics[width=\textwidth]{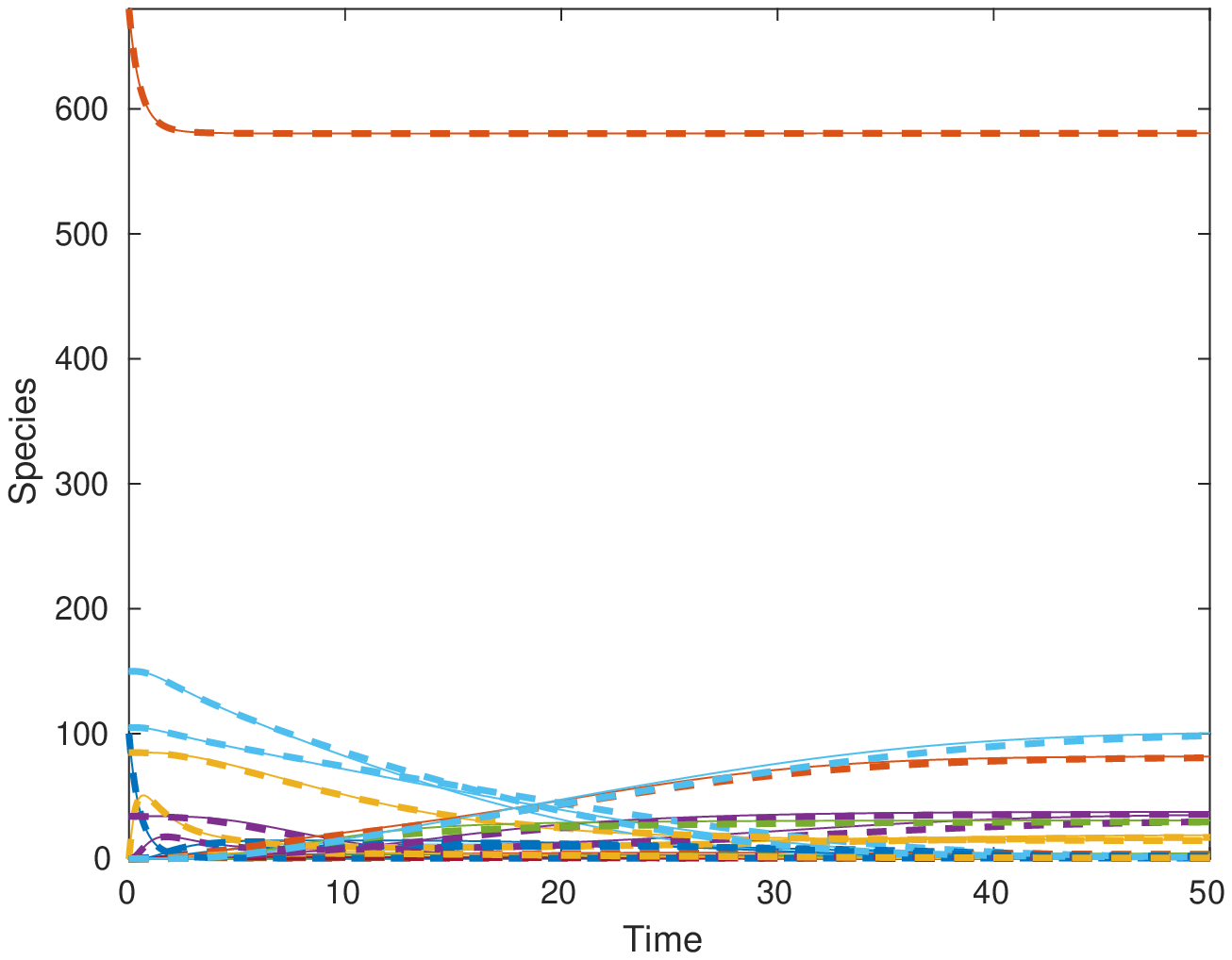}
\end{minipage}
\hfill
\begin{minipage}{0.46\hsize}
	\includegraphics[width=\textwidth]{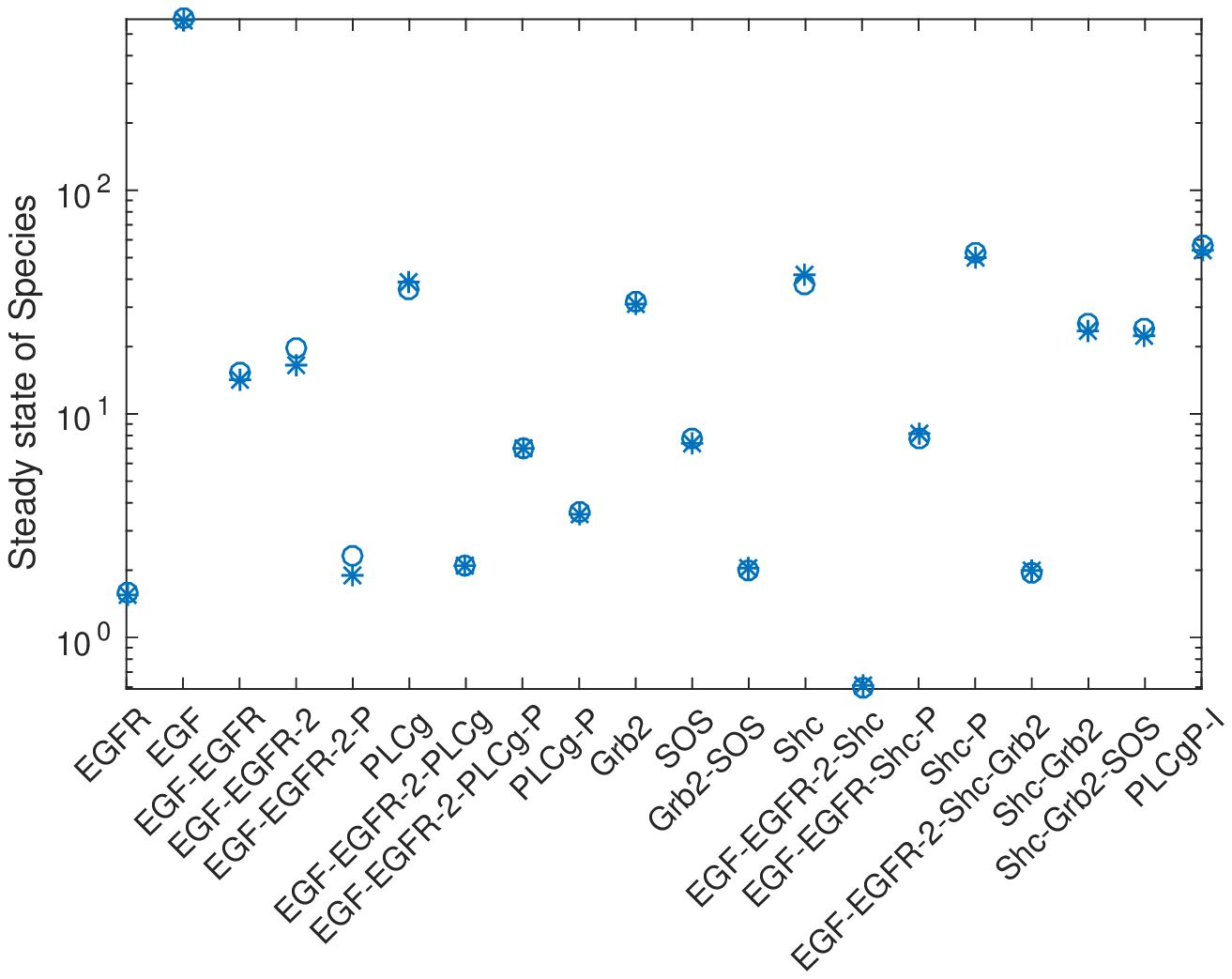}
\end{minipage}
\caption{\label{fig:EGFR-trajs-full-red} Left: Mean-field trajectories of species indexed by $\mathcal{S}_\mathcal{P}$ \eqref{eq:sens_spec}. Solid lines correspond to the reduced model and thick dashed lines to the full model. Right: Time average on $[0,T]$ for the reduced model (circle), vs. the full model (star) at $97\%$ of total information per pFIM diagonal.}
\end{figure}

\subsubsection*{Validation (Step 5)}
In Table \ref{tab:EGFR} we show different reduced models, according to total information as per the pFIM diagonal. Column ``Loss'' shows the value of the loss function \eqref{eq:losse} at which $\theta_{0:T}^*$ is achieved, by numerically solving problem \eqref{eq:opt_e}. Observe the monotone reduction of the information loss, it has two large decays when going from $93\%$ to $95\%$ and from $97\%$ to $99\%$.
Distances shown in the last two columns are given by \eqref{eq:pdist} and \eqref{eq:sdist} respectively, where the set $\mathcal{O}$ is given by the set of species associated with each respective reduced model. That is, the set $\mathcal{O}$ associated with the second line ($95\%$) correspond to the 20 species selected in that reduced model.
Notice that the number of selected species remain unchanged  when increasing the total information from $95\%$ to $97\%$, but three additional reaction channels are added to the reduced model. This results in lower information loss  and also lower pathwise and steady state distance of the species trajectories. 
Model at $97\%$ of total information as per the pFIM seems a good compromise between information loss and pathwise distances in comparison the reduction of  species and reaction channels.
\begin{table}[h!]
\centering
\begin{footnotesize}\begin{tabular}{|c|c|c|c|c|c|c|}
\hline \rule{0pt}{2.6ex}
\textbf{pFIM \% \eqref{eq:pFIM}}&$\bar{J}$&$\bar{K}$&$\bar{d}$&\textbf{Loss \eqref{eq:losse}}&\textbf{path-dist \eqref{eq:pdist}}&\textbf{SS-dist \eqref{eq:sdist}}\\\hline
93&19&20&19&1.45047&0.681&0.353\\\hline
95&21&22&20&0.464843&7.776&3.824\\\hline
97&24&25&20&0.16691&0.638&0.214\\\hline
99&31&32&21&0.0469859&0.437&0.093\\\hline
\end{tabular}
\end{footnotesize}
\caption{\label{tab:EGFR} Different reduced models as per total pFIM information for the EGFR model. Observe the monotone decrease of the information loss even though model has different number of species. Distances shown in the last two columns are given by \eqref{eq:pdist} and \eqref{eq:sdist} respectively, where the set $\mathcal{O}$ is given by the set of species associated with each respective reduced model.}
\end{table}

\subsubsection*{Discussion}
We observe a significant reduction in terms of species, reaction channels and parameters, while controlling the entropy loss and obtaining virtually identical mean-field trajectories for almost all the species. The number of species in the reduced model (at $97\%$ of total information) is 20 (out of 24) and the number of reaction channels is 24 (out of 47) while the number of parameters is 25 (out of 50). Further details can be found on Table \ref{tab:EGFR}.

\section{Numerical experiments: Stochastic models}
\label{sec:stoc_exp}
In this section we present two stochastic pure jump models: a circadian clock model which is based on \cite{Leloup03}, and a growth factor receptor, based on \cite{Kholodenko99}. By scaling the original models, we obtain stochastic pure jump representations that manifest non-Gaussian distributions of the time average of species counts for many of the species. We show in this section that our method is robust in the sense that it can be used to either reduce a model which is close to a Langevin or mean-field regime and also to reduce a pure jump model with low count in many of its species. Therefore, once the reduced network $(\bar\nu_j,\bar a_j)_{j=1}^{\bar J}$ is constructed, a pure jump stochastic, a Langevin diffusion or a deterministic representation can be readily obtained up to the corresponding scalings.

\subsection{A stochastic circadian clock model}
\label{ex:stoc_CIRC}
In this section we focus on a stochastic circadian model which is derived  from the deterministic model presented in the previous section, to  demonstrate the robustness of the method in a challenging non-Gaussian regime. 
 Using Kurtz's classical scaling (\cite{KURTZ1978223}) we obtain a stochastic pure jump model from the original deterministic one to perform a statistical validation. We first observe that the original model can be written in integral form (see \eqref{eq:rrODE} for the differential form) as
$$
z(t) = z(0) + \sum_{j=1}^J  \nu_j \int_0^t a_j(z(s);c) ds  \, , 
$$
and $z(0) = z_0$ where $z(t) \in \mathbb{R}^d$ represents the concentration of the species. Now given a parameter that measures the size of the system, $N$, we obtain the following counting process that \emph{approximates} $z(t)$
$$
X(t) = X(0) + \sum_{j=1}^J  \nu_j Y_j \Big(N \int_0^t \bar{a}_j(N^{-1}X(s);c) ds \Big) \, ,
$$
and $X(t)=N z_0$ where $\mathcal{P}_j$ are independent unit-rate Poisson processes and $\bar{a}_j$ are the kinetic propensity functions, independent of $N$. When $N \rightarrow \infty$ it can be shown that the mean of $N^{-1}X$ converge pathwise to  $z$ (for further details and more advanced scaling limits we refer to \cite{kang2013}). In this example, for $N{=}10^5$ a stochastic trajectory is indistinguishable from $z$. In figure \ref{fig:CIRCs} (left) we show one stochastic trajectory for $N{=}2$. 

We compute the pFIM for this stochastic network (see \eqref{eq:pFIM_SRN}) by sampling $M{=}10^3$ trajectories of $X(t)$, for $t\in [0,T]$, $T{=}72$. Using the same trajectories, we compute the mean time average of each species $S_i$
\begin{equation}
\label{eq:mean_time_avg_spec}
\mathcal{A}(X_i):=\frac{1}{M} \sum_{m=1}^M  \frac{1}{T} \sum_{k=1}^{T^{(m)}}x_{k,m} \Delta_{t_{k,m}} \, , \quad i{=}1,2,..., d \, ,
\end{equation}
where $T^{(m)}$ is the number of jumps of the $m-$th trajectory. 
Using this sample, we construct a bootstrap estimation of the mean of the time average and the corresponding 95\%  confidence interval (together for comparison with the time average of $z$), as shown in Figure \ref{fig:CIRCs} (left). We observe that, for many species, the time average of the deterministic model is not included in a 95\% confidence interval of the mean time average of the stochastic model. This is reasonable because in the small particle count case, species may get extinct (hit the zero boundary), among many other possible stochastic behaviour not captured by the deterministic model. It is clear that this stochastic model, derived from the original deterministic one, is not in a Langevin or mean-field regime.

\begin{figure}[h!]
\centering
\begin{minipage}{0.48\hsize}
	\includegraphics[width=\textwidth]{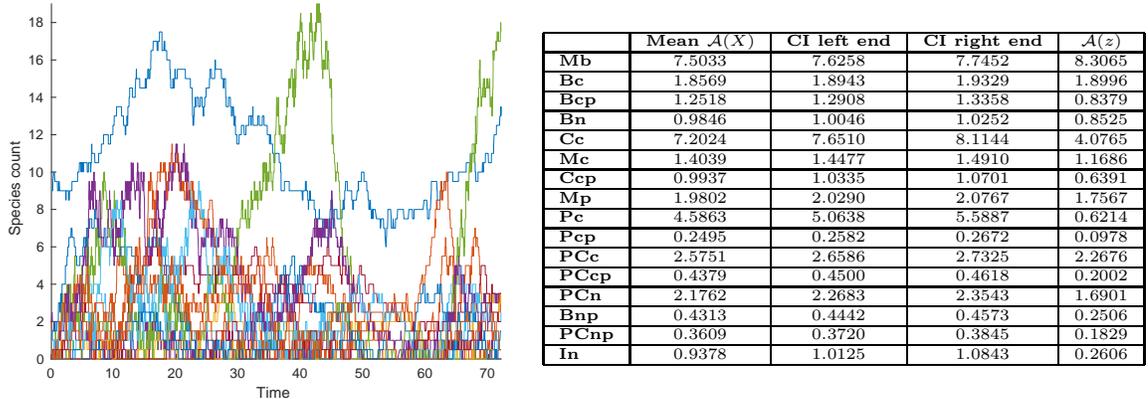}
\end{minipage}
\hfill
\begin{minipage}{0.48\hsize}
\begin{tiny}\begin{tabular}{|l|c|c|c|c|}
\hline
&\textbf{Mean $\mathcal{A}(X)$}&\textbf{CI left end}&\textbf{CI right end}&$\mathcal{A}(z)$\\\hline
\textbf{Mb}&7.5033&7.6258&7.7452&8.3065\\\hline
\textbf{Bc}&1.8569&1.8943&1.9329&1.8996\\\hline
\textbf{Bcp}&1.2518&1.2908&1.3358&0.8379\\\hline
\textbf{Bn}&0.9846&1.0046&1.0252&0.8525\\\hline
\textbf{Cc}&7.2024&7.6510&8.1144&4.0765\\\hline
\textbf{Mc}&1.4039&1.4477&1.4910&1.1686\\\hline
\textbf{Ccp}&0.9937&1.0335&1.0701&0.6391\\\hline
\textbf{Mp}&1.9802&2.0290&2.0767&1.7567\\\hline
\textbf{Pc}&4.5863&5.0638&5.5887&0.6214\\\hline
\textbf{Pcp}&0.2495&0.2582&0.2672&0.0978\\\hline
\textbf{PCc}&2.5751&2.6586&2.7325&2.2676\\\hline
\textbf{PCcp}&0.4379&0.4500&0.4618&0.2002\\\hline
\textbf{PCn}&2.1762&2.2683&2.3543&1.6901\\\hline
\textbf{Bnp}&0.4313&0.4442&0.4573&0.2506\\\hline
\textbf{PCnp}&0.3609&0.3720&0.3845&0.1829\\\hline
\textbf{In}&0.9378&1.0125&1.0843&0.2606\\\hline
\end{tabular}
\end{tiny}
\end{minipage}
\caption{\label{fig:CIRCs} Table: Mean time average of each species with the corresponding 95\% empirical confidence interval and the time average of $z$ (the time average denoted by $\mathcal{A}$), for the stochastic circadian model. We can observe that, for some species, there are significant differences (e.g. species \emph{Pc}).}
\end{figure}

We now reduce this stochastic model by using our method. We show in Table \ref{tab:CIRCs} the results. For an equivalent number of parameters, we have consistent pFIM and validation distances in comparison with the deterministic model. This is a remarkable robustness of our reduction method, and stress the point that it can be applied to deterministic, Langevin or pure jump reaction networks. For example, we compare the reduced model obtained by applying our method to the deterministic model versus the reduced model of the stochastic circadian model. Compare for instance the model with 35 parameters and 31 reaction channels that accumulates $95.05\%$ of the deterministic pFIM (Table \ref{tab:CIRC}) versus the model with 34 parameters and 30 reaction channels that accumulates $95.47\%$ of the stochastic pFIM (Table \ref{tab:CIRCs}). In fact, we obtained a slightly more parsimonious model (less parameters and reaction channels) with smaller validation distances. We notice that the validation distances are mean-field based (see \eqref{eq:pdist} and \eqref{eq:sdist}).

\begin{table}[h!]
\centering
\begin{footnotesize}\begin{tabular}{|c|c|c|c|c|c|c|}
\hline \rule{0pt}{2.6ex}
\textbf{pFIM \% \eqref{eq:pFIM}}&\textbf{$\bar{J}$}&\textbf{$\bar{K}$}&\textbf{$\bar{d}$}&\textbf{Loss \eqref{eq:losse}}&\textbf{path-dist \eqref{eq:pdist}}&\textbf{SS-dist \eqref{eq:sdist}}\\\hline
91.96&28&30&16&0.119118&2.204&0.268\\\hline
92.94&29&31&16&0.0570935&3.125&0.268\\\hline
93.91&30&32&16&0.00611969&0.681&0.051\\\hline
94.69&30&33&16&0.00542096&0.698&0.055\\\hline
95.46&30&34&16&0.00533519&0.421&0.061\\\hline
96.08&30&35&16&0.00535939&0.663&0.050\\\hline
96.64&42&36&16&0.0053336&0.179&0.038\\\hline
97.20&43&37&16&0.00490005&0.215&0.043\\\hline
\end{tabular}
\end{footnotesize}

\caption{\label{tab:CIRCs} Different reduced stochastic models as per total pFIM information, for the stochastic circadian model. Since the number of species in every model is the same, the validation distances of the last two columns are applied to the same set of species.}
\end{table}

In Figure \ref{fig:CIRCs-box} we show a box-plot comparison between the full stochastic model and the reduced model at 95.46\% of total information. On each x-axis we show a box for the species in the full model and next a box for the same species in the reduced model. We can observe a tight agreement on the vast majority of the resolved species. In Figure \ref{fig:CIRCs-hist} we show histograms for 3 representative species with non-Gaussian behaviour showing tight agreements.

\begin{figure}[h!]
\begin{minipage}{0.325\hsize}
	\includegraphics[width=1.1\textwidth]{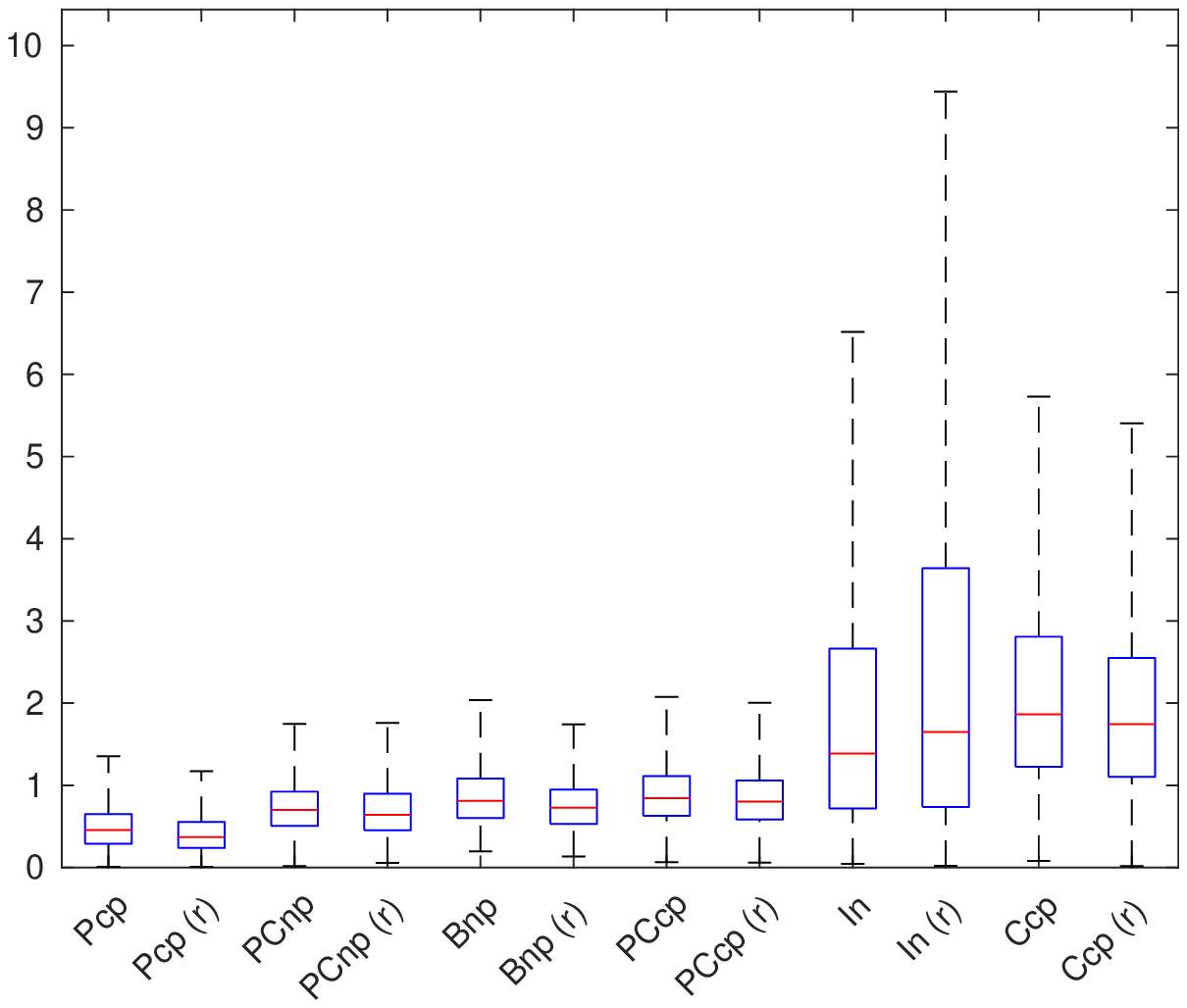}
\end{minipage}
\begin{minipage}{0.325\hsize}
	\includegraphics[width=1.1\textwidth]{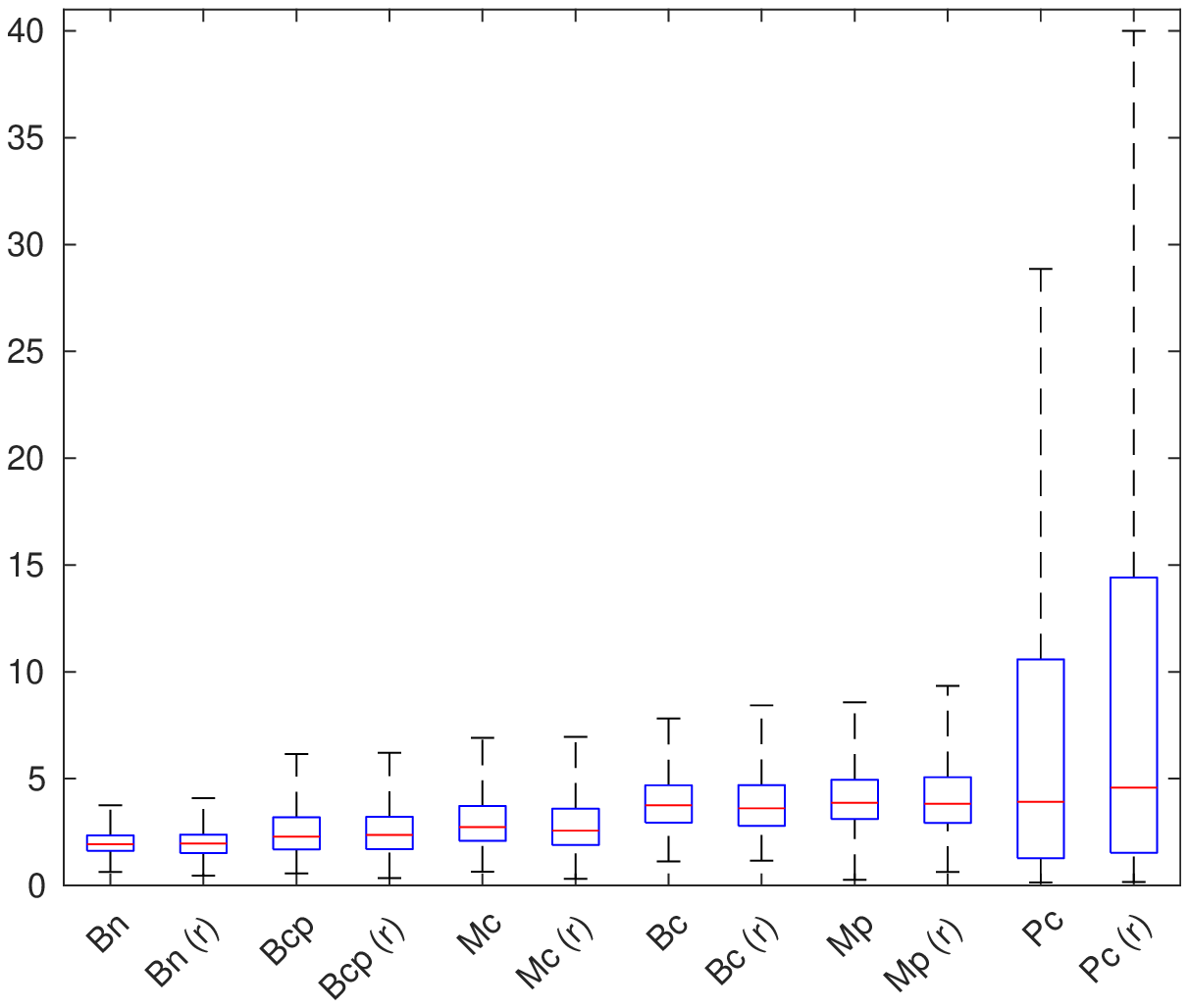}
\end{minipage}
\begin{minipage}{0.325\hsize}
	\includegraphics[width=1.1\textwidth]{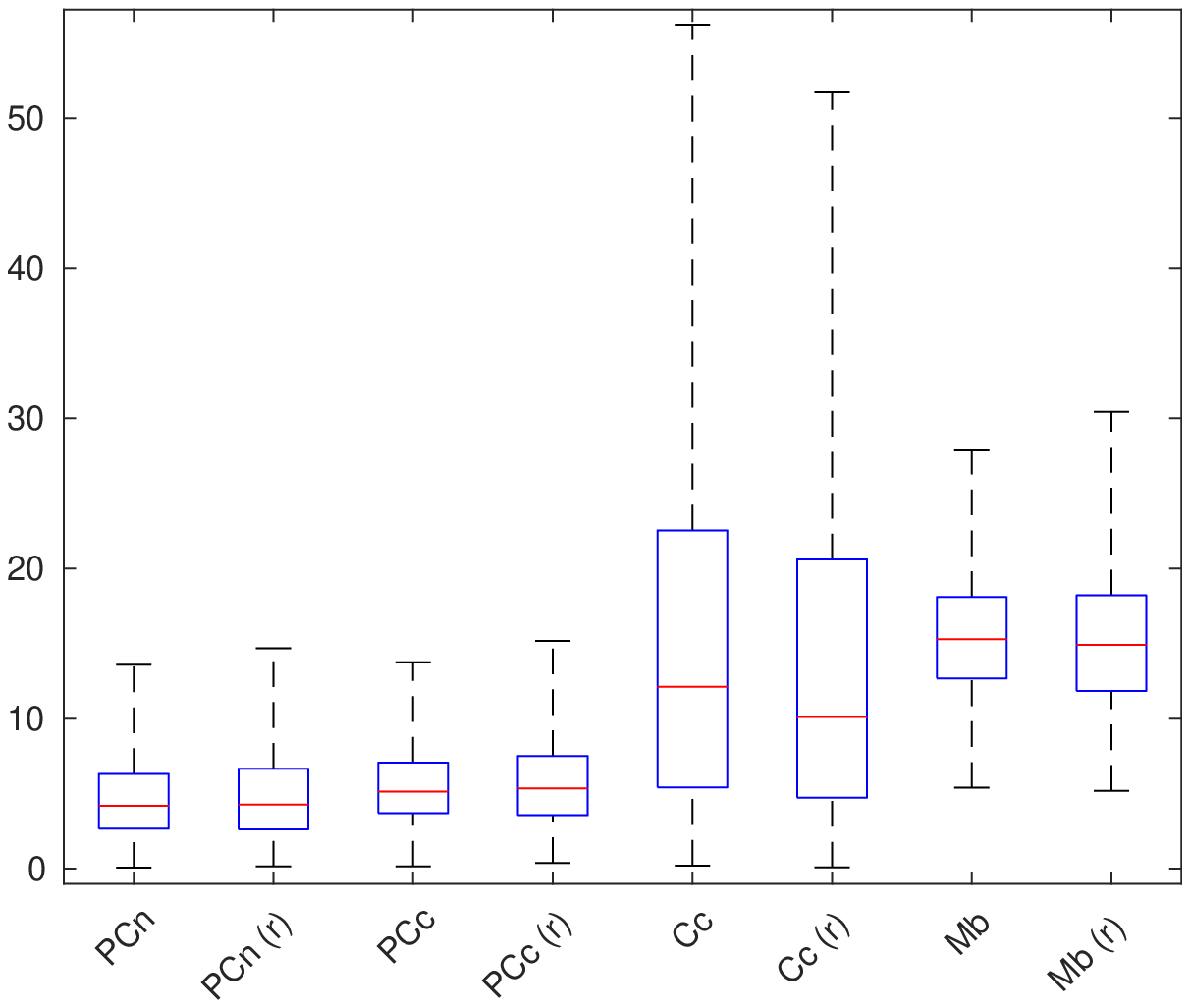}
\end{minipage}
\caption{\label{fig:CIRCs-box} Box plot comparison of time average of species,  for the stochastic circadian model. On each box, the central mark indicates the median, and the bottom and top edges of the box indicate the 25th and 75th percentiles, respectively of the time average. The whiskers extend to the most extreme data points not considering outliers. Species name with suffix ``(r)'' correspond to the reduced model at 95.46\%. }
\end{figure}

\begin{figure}[h!]
\begin{minipage}{0.325\hsize}
	\includegraphics[width=1.1\textwidth]{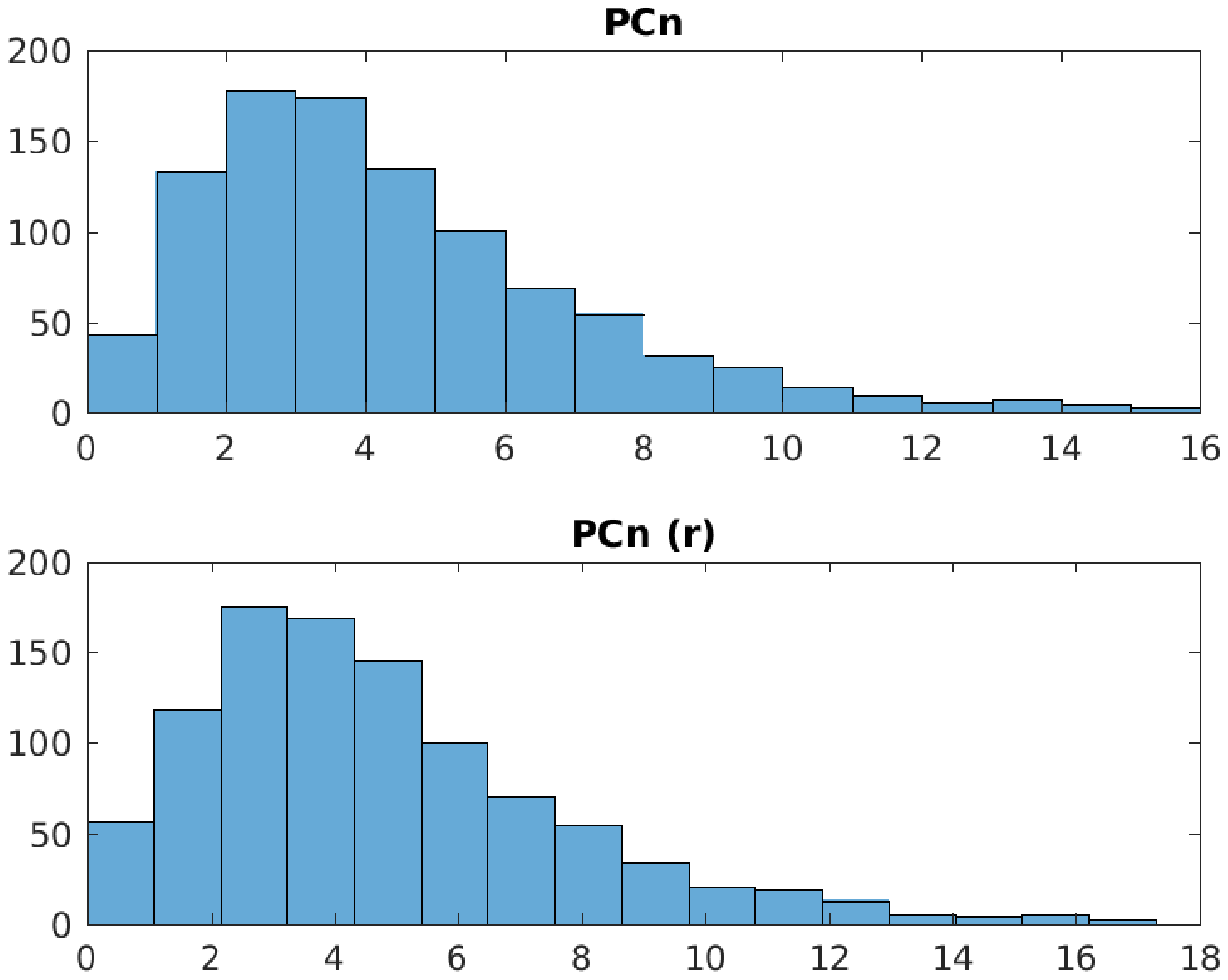}
\end{minipage}
\begin{minipage}{0.325\hsize}
	\includegraphics[width=1.1\textwidth]{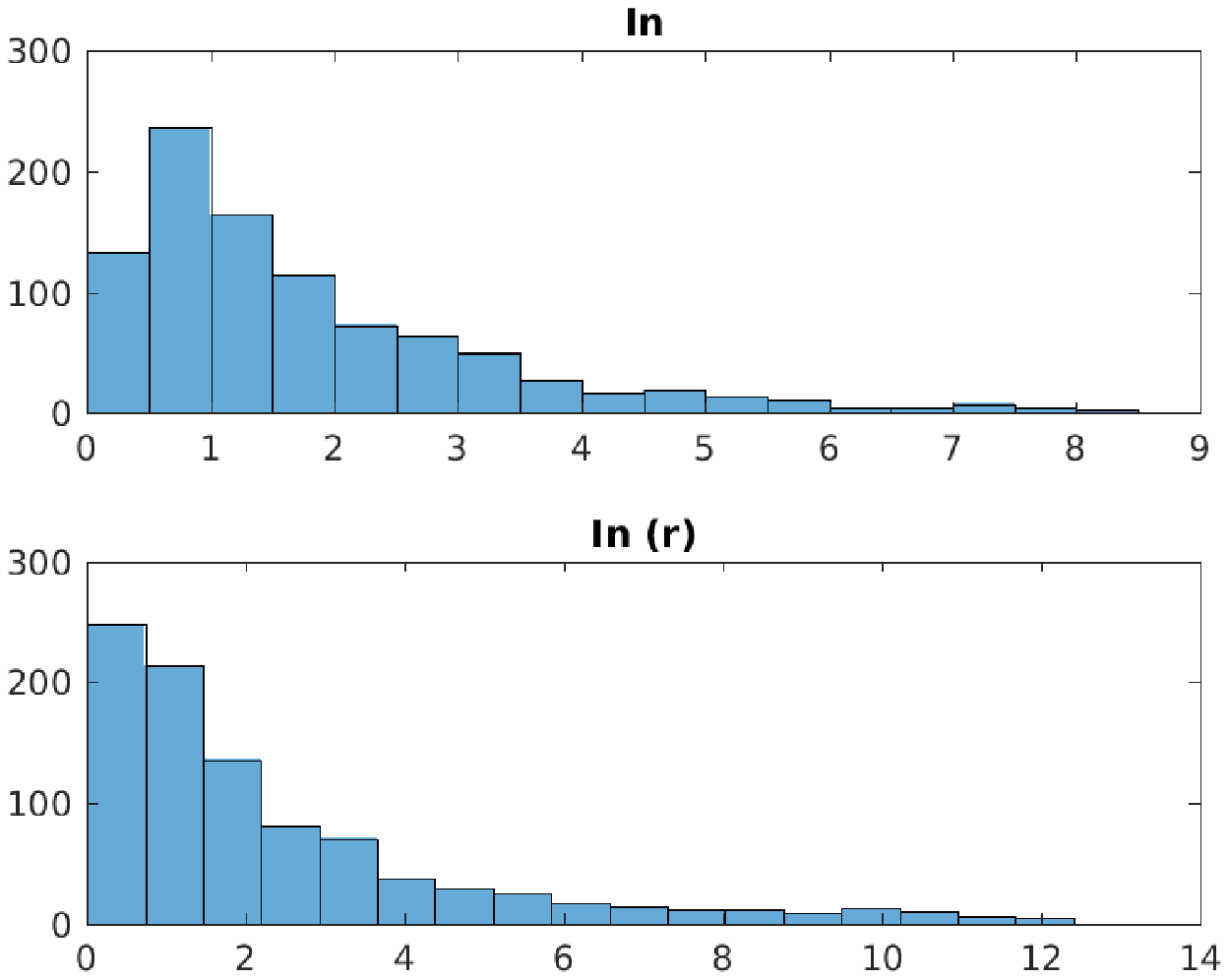}
\end{minipage}
\begin{minipage}{0.325\hsize}
	\includegraphics[width=1.1\textwidth]{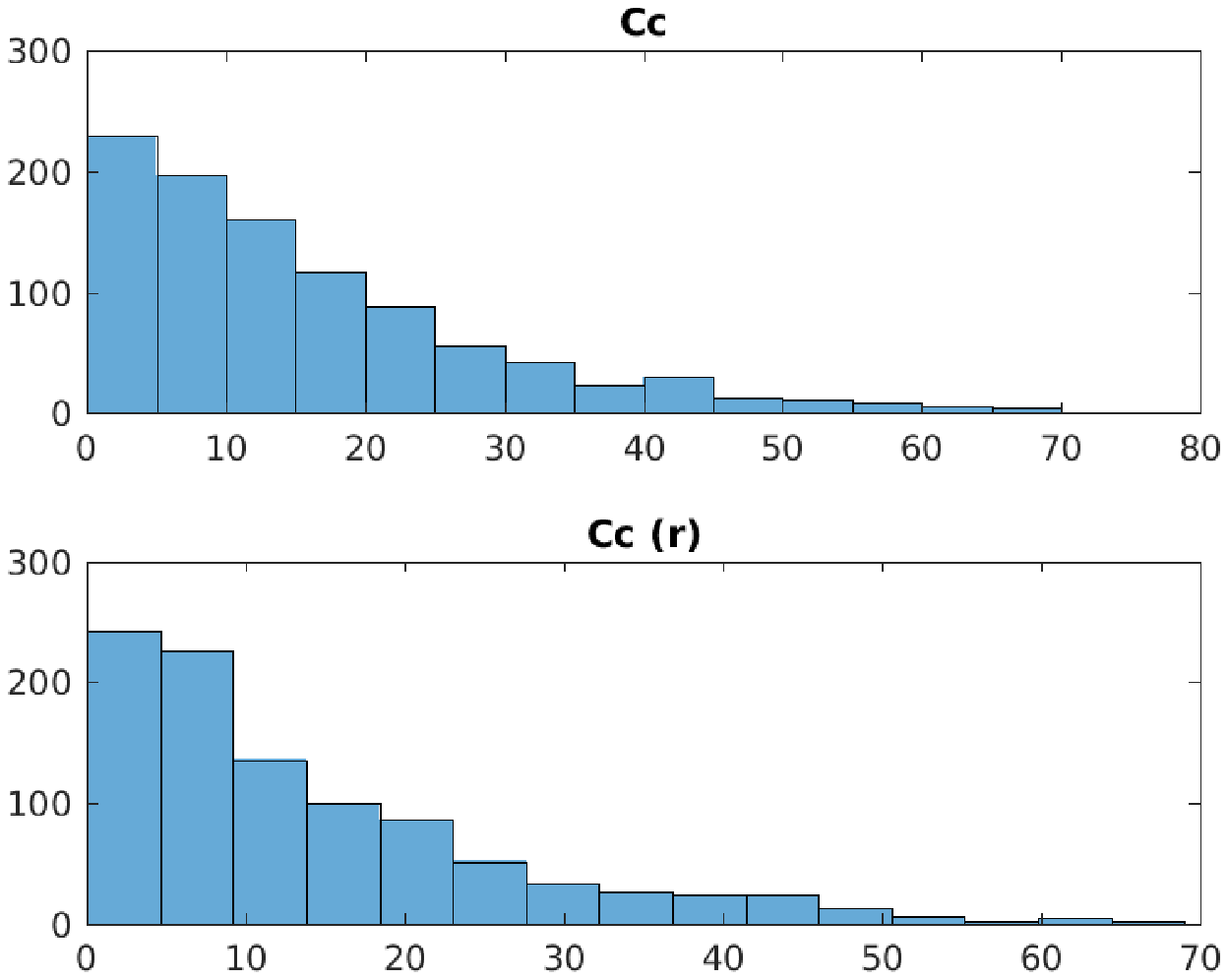}
\end{minipage}
\caption{\label{fig:CIRCs-hist} Histograms for 3 representative species with non-Gaussian behaviour showing tight agreements, for the stochastic circadian model.}
\end{figure}

\subsection{A stochastic growth factor receptor model}
In this section we focus on a stochastic growth factor receptor model which is derived  from the deterministic model presented in Section \ref{ex:EGFR}, by using Kurtz's classical scaling (\cite{KURTZ1978223}) (for more details see Section \ref{ex:stoc_CIRC}).

We compute the pFIM for this stochastic network (see \eqref{eq:pFIM_SRN}) by sampling $M{=}10^3$ trajectories of $X(t)$, for $t\in [0,T]$, $T{=}50$. Using that sample, we construct a bootstrap estimation of the mean of the time average and the corresponding 95\% confidence interval (together for comparison with the time average of $z$), as shown in Figure \ref{fig:EGFRs}. 
We observe that, for many species, the time average of the deterministic model is a good approximation of the mean time average. In this respect, we can consider this stochastic model close to a mean-field regime.

\begin{figure}[h!]
\centering
\begin{tiny}\begin{tabular}{|l|c|c|c|c|}
\hline
&\textbf{Mean $\mathcal{A}(X)$}&\textbf{CI left end}&\textbf{CI right end}&$\mathcal{A}(z)$\\\hline
\textbf{EGFR}&2.0997&2.1353&2.1715&2.2993\\\hline
\textbf{EGF}&60.0997&60.1353&60.1715&60.2993\\\hline
\textbf{EGF-EGFR}&3.7125&3.7529&3.7945&3.9655\\\hline
\textbf{EGF-EGFR-2}&1.1980&1.2183&1.2375&1.0995\\\hline
\textbf{EGF-EGFR-2-P}&0.1318&0.1344&0.1370&0.1200\\\hline
\textbf{PLCg}&9.6485&9.6979&9.7485&9.8726\\\hline
\textbf{EGF-EGFR-2-PLCg}&0.0602&0.0627&0.0652&0.0563\\\hline
\textbf{EGF-EGFR-2-PLCg-P}&0.1770&0.1852&0.1944&0.1718\\\hline
\textbf{PLCg-P}&0.0786&0.0826&0.0864&0.0727\\\hline
\textbf{Grb2}&8.4036&8.4293&8.4555&8.5213\\\hline
\textbf{EGF-EGFR-2-Grb2}&0.0293&0.0368&0.0443&0.0290\\\hline
\textbf{SOS}&2.7198&2.7376&2.7553&2.7840\\\hline
\textbf{EGF-EGFR-2-Grb2-SOS}&0.0057&0.0082&0.0111&0.0060\\\hline
\textbf{Grb2-SOS}&0.1124&0.1245&0.1381&0.1070\\\hline
\textbf{Shc}&12.3810&12.4518&12.5232&12.7312\\\hline
\textbf{EGF-EGFR-2-Shc}&0.0236&0.0244&0.0251&0.0229\\\hline
\textbf{EGF-EGFR-Shc-P}&0.3506&0.3619&0.3734&0.3390\\\hline
\textbf{Shc-P}&1.7070&1.7607&1.8174&1.5703\\\hline
\textbf{EGF-EGFR-2-Shc-Grb2}&0.0162&0.0185&0.0207&0.0191\\\hline
\textbf{Shc-Grb2}&0.2400&0.2530&0.2667&0.2147\\\hline
\textbf{EGF-EGFR-2-Shc-Grb2-SOS}&0.0036&0.0053&0.0073&0.0041\\\hline
\textbf{Shc-Grb2-SOS}&0.1147&0.1244&0.1342&0.0989\\\hline
\textbf{PLCgP-I}&0.9308&0.9715&1.0134&0.8265\\\hline
\end{tabular}
\end{tiny}
\caption{\label{fig:EGFRs} Table: Mean time average of each species with the corresponding 95\% empirical confidence interval and the time average of $z$ (the time average denoted by $\mathcal{A}$), stochastic growth factor receptor model. We can observe that, for many  species, the time average of the deterministic model is a good approximation of the mean time average. We notice that only 23 species are shown in the table because in the original model a dummy species is used to model the creation of particles.}
\end{figure}

We now reduce this stochastic model by using our method. We show in Table \ref{tab:EGFRs} the results. For an equivalent number of parameters, we have consistent pFIM and validation distances in comparison with the deterministic model. We stress again this robustness feature of our reduction method. For example, we compare the reduced model obtained by applying our method to the deterministic model versus the reduced model of the stochastic circadian model. Compare for instance the model with 25 parameters and 24 reaction channels that accumulates $97.244\%$ of the deterministic pFIM (Table \ref{tab:EGFR}) versus the model with 24 parameters and 22 reaction channels that accumulates $99.5\%$ of the stochastic pFIM (Table \ref{tab:EGFRs}). In fact, we obtained a slightly more parsimonious model (less parameters and reaction channels) with similar validation distances. We notice that the validation distances are mean-field based (see \eqref{eq:pdist} and \eqref{eq:sdist}).

\begin{table}[h!]
\centering
\begin{footnotesize}\begin{tabular}{|c|c|c|c|c|c|c|}
\hline \rule{0pt}{2.6ex}
\textbf{pFIM \% \eqref{eq:pFIM}}&\textbf{$\bar{J}$}&\textbf{$\bar{K}$}&\textbf{$\bar{d}$}&\textbf{Loss \eqref{eq:losse}}&\textbf{path-dist \eqref{eq:pdist}}&\textbf{SS-dist \eqref{eq:sdist}}\\\hline
98.97&20&21&18&0.0033021&2.443&0.477\\\hline
99.50&22&24&19&0.000889939&0.627&0.191\\\hline
99.73&24&26&20&0.000326358&0.359&0.106\\\hline
\end{tabular}
\end{footnotesize}

\caption{\label{tab:EGFRs} Different reduced stochastic models as per total pFIM information, stochastic growth factor receptor model. Since the number of species in every model is the same, the validation distances of the last two columns are applied to the same set of species.}
\end{table}

In Figure \ref{fig:EGFRs-box} we show a comparison between the full stochastic model and the reduced model at 99.5\% of total information by using a box plot. On each x-axis we show a box for the species in the full model and next a box for the same species in the reduced model. We can observe a tight agreement on the vast majority of the species. In Figure \ref{fig:EGFRs-hist} we show histograms for 3 representative species with non-Gaussian behaviour showing tight agreements.

\begin{figure}[h!]
\begin{minipage}{0.325\hsize}
	\includegraphics[width=1.1\textwidth]{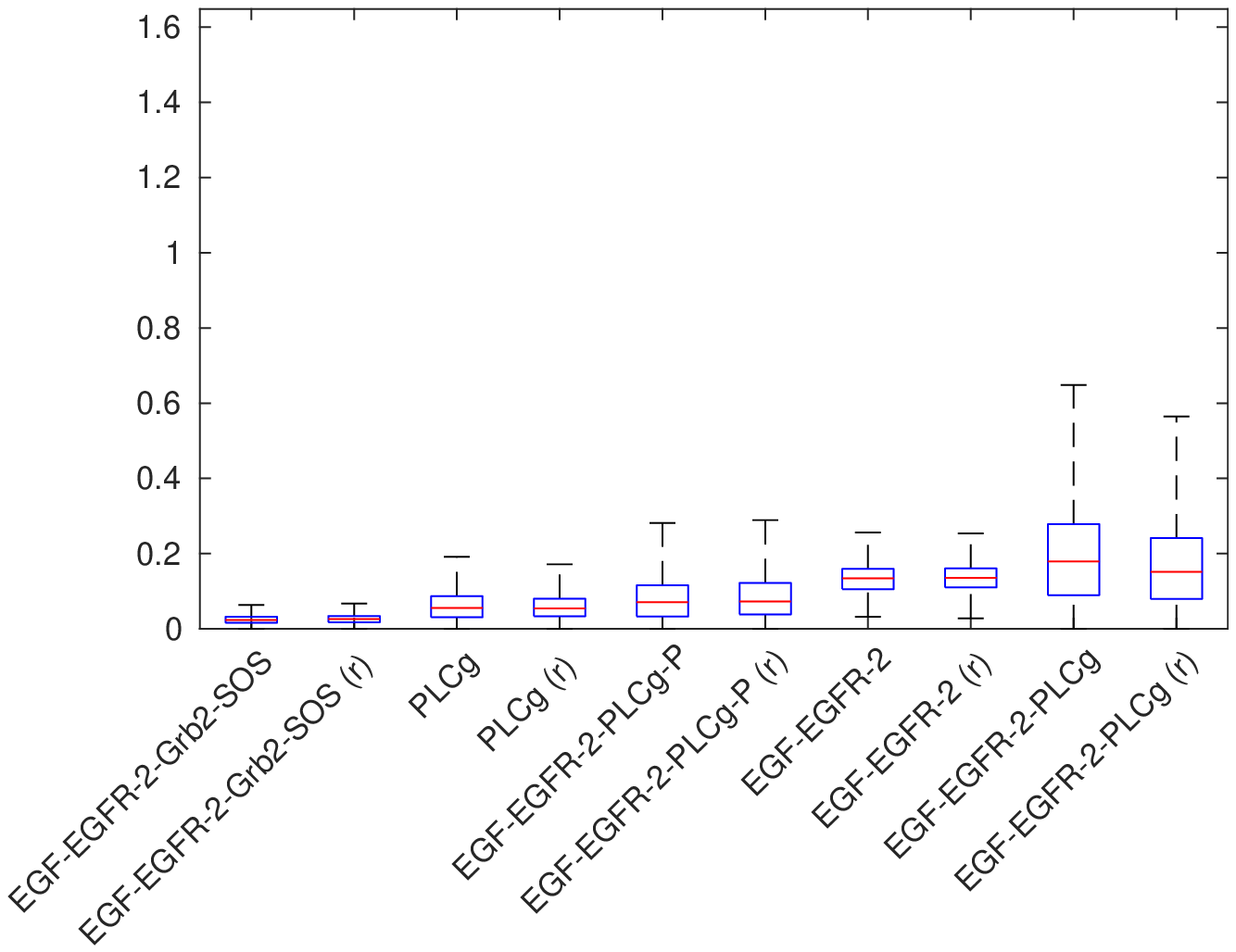}
\end{minipage}
\begin{minipage}{0.325\hsize}
	\includegraphics[width=1.1\textwidth]{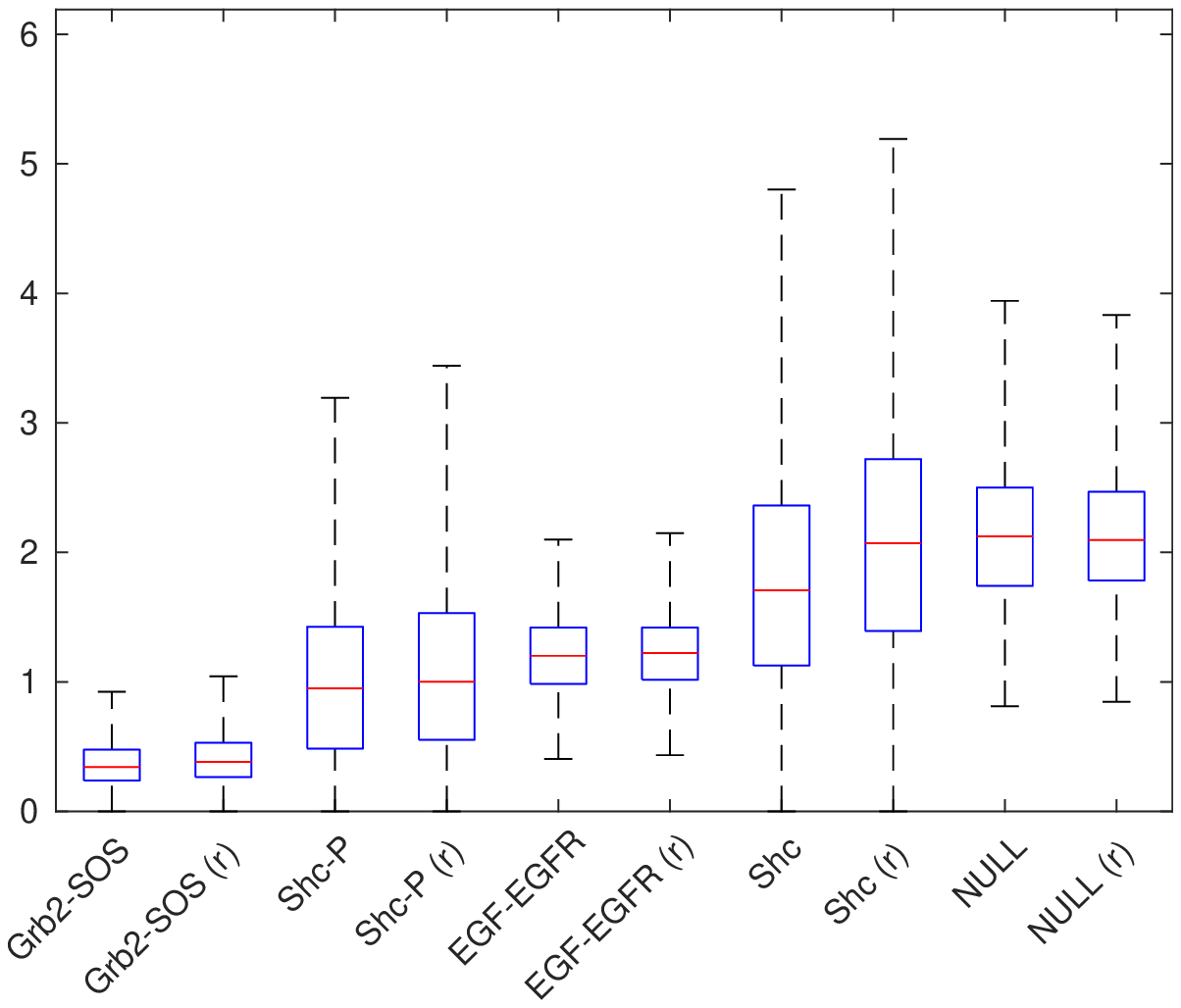}
\end{minipage}
\begin{minipage}{0.325\hsize}
	\includegraphics[width=1.1\textwidth]{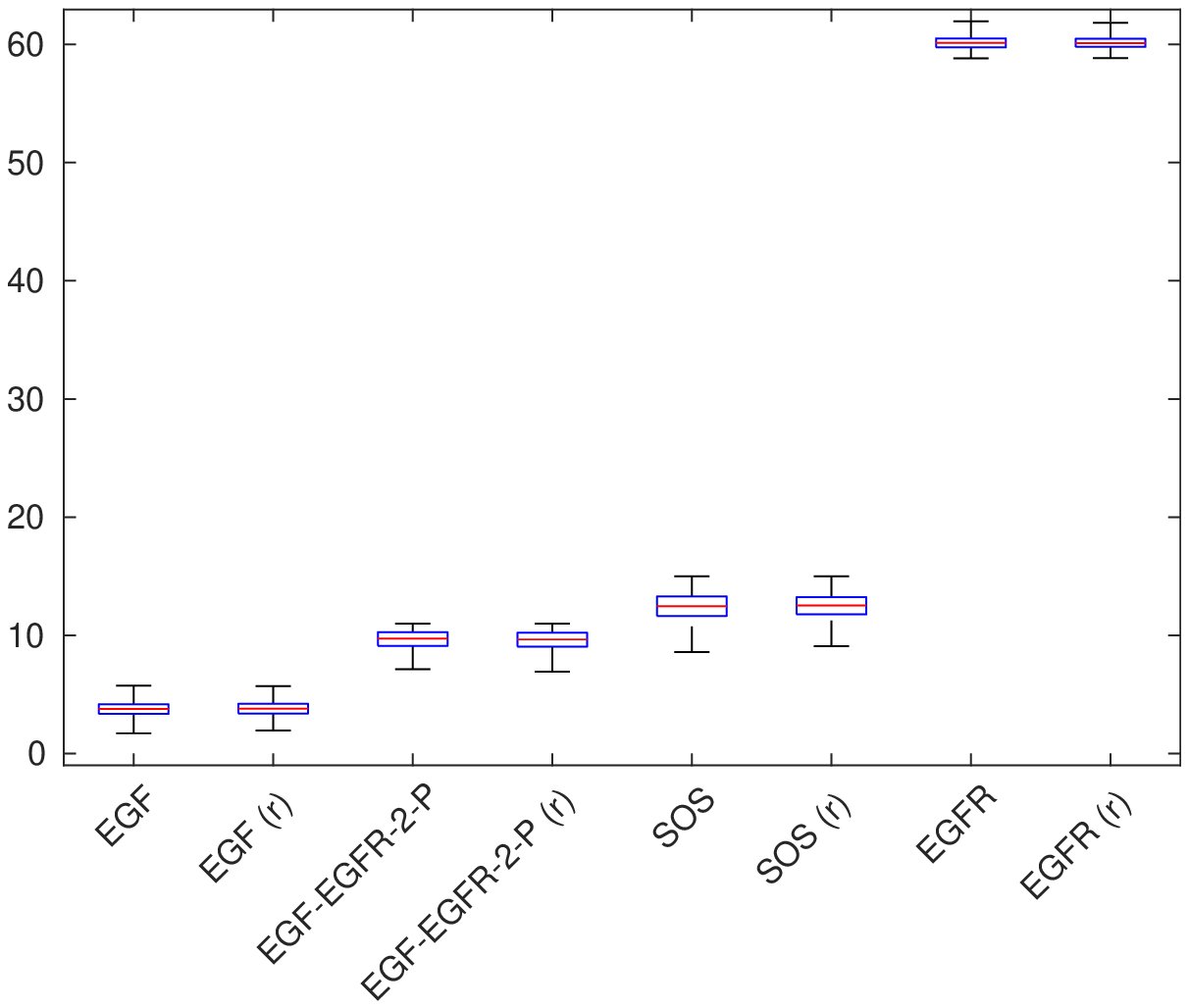}
\end{minipage}
\caption{\label{fig:EGFRs-box} Box plot comparison of time average of resolved species, stochastic growth factor receptor model. On each box, the central mark indicates the median, and the bottom and top edges of the box indicate the 25th and 75th percentiles, respectively of the time average. The whiskers extend to the most extreme data points not considering outliers. Resolved species name with suffix ``(r)'' correspond to the reduced model. }
\end{figure}

\begin{figure}[h!]
\begin{minipage}{0.325\hsize}
	\includegraphics[width=1.1\textwidth]{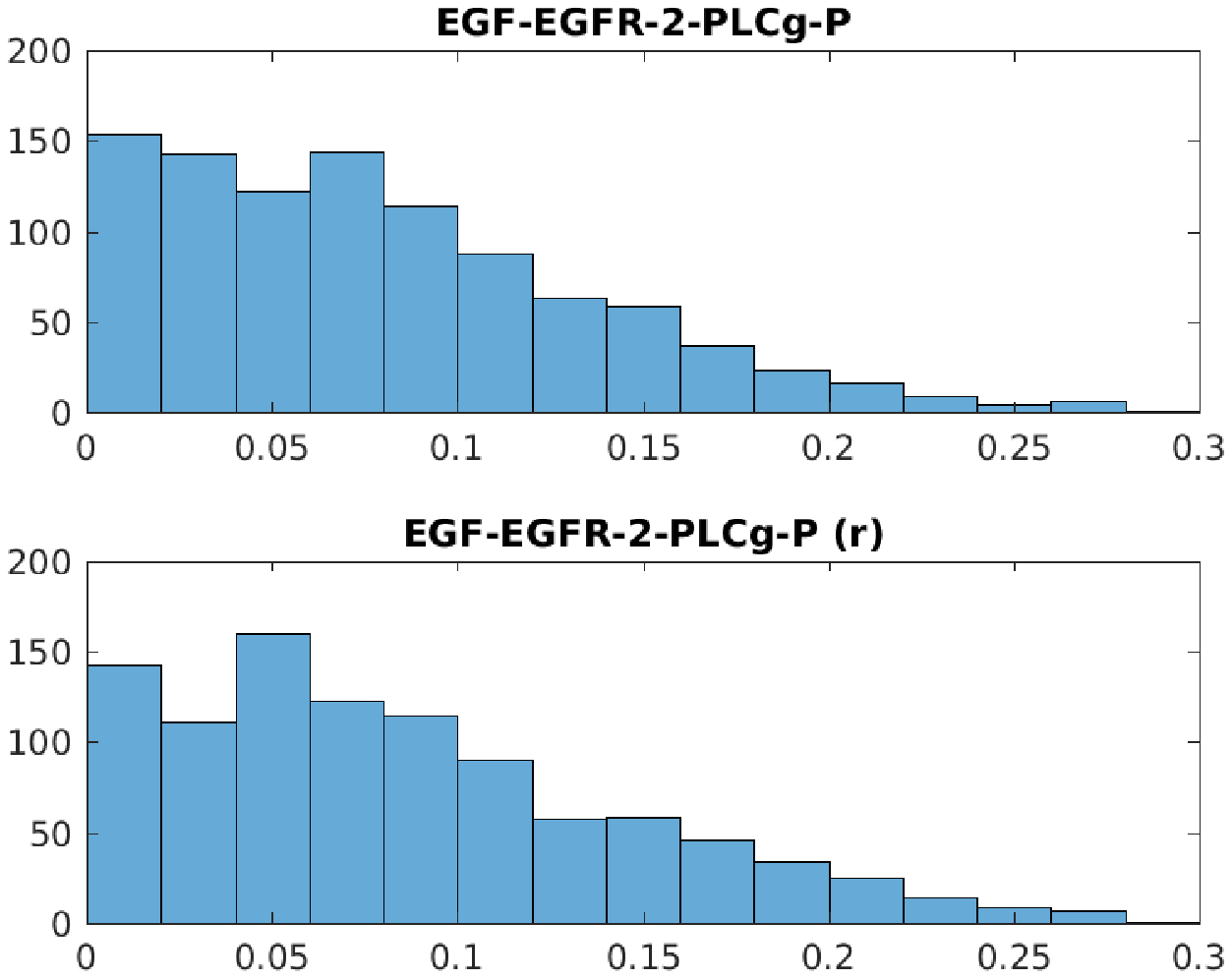}
\end{minipage}
\begin{minipage}{0.325\hsize}
	\includegraphics[width=1.1\textwidth]{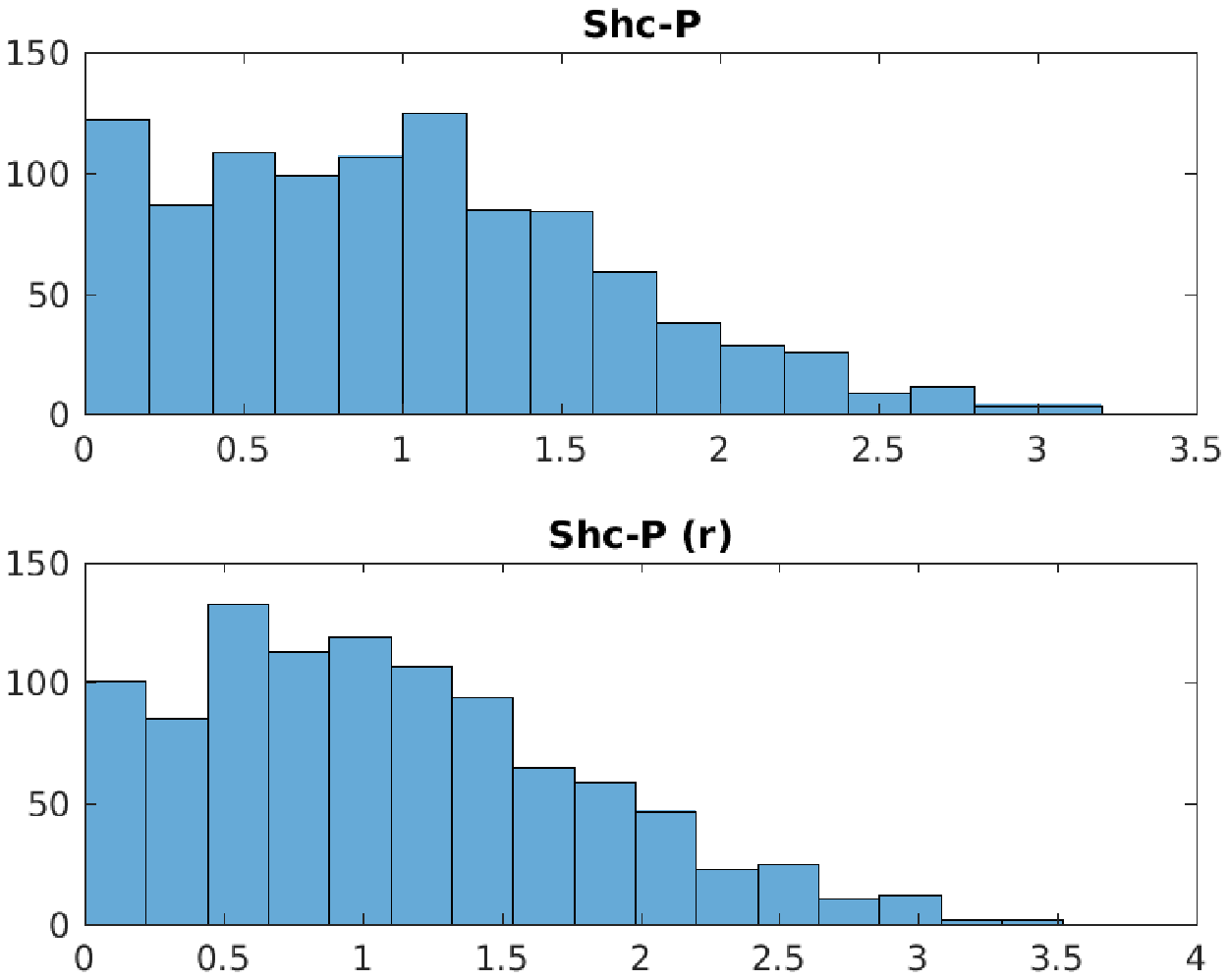}
\end{minipage}
\begin{minipage}{0.325\hsize}
	\includegraphics[width=1.1\textwidth]{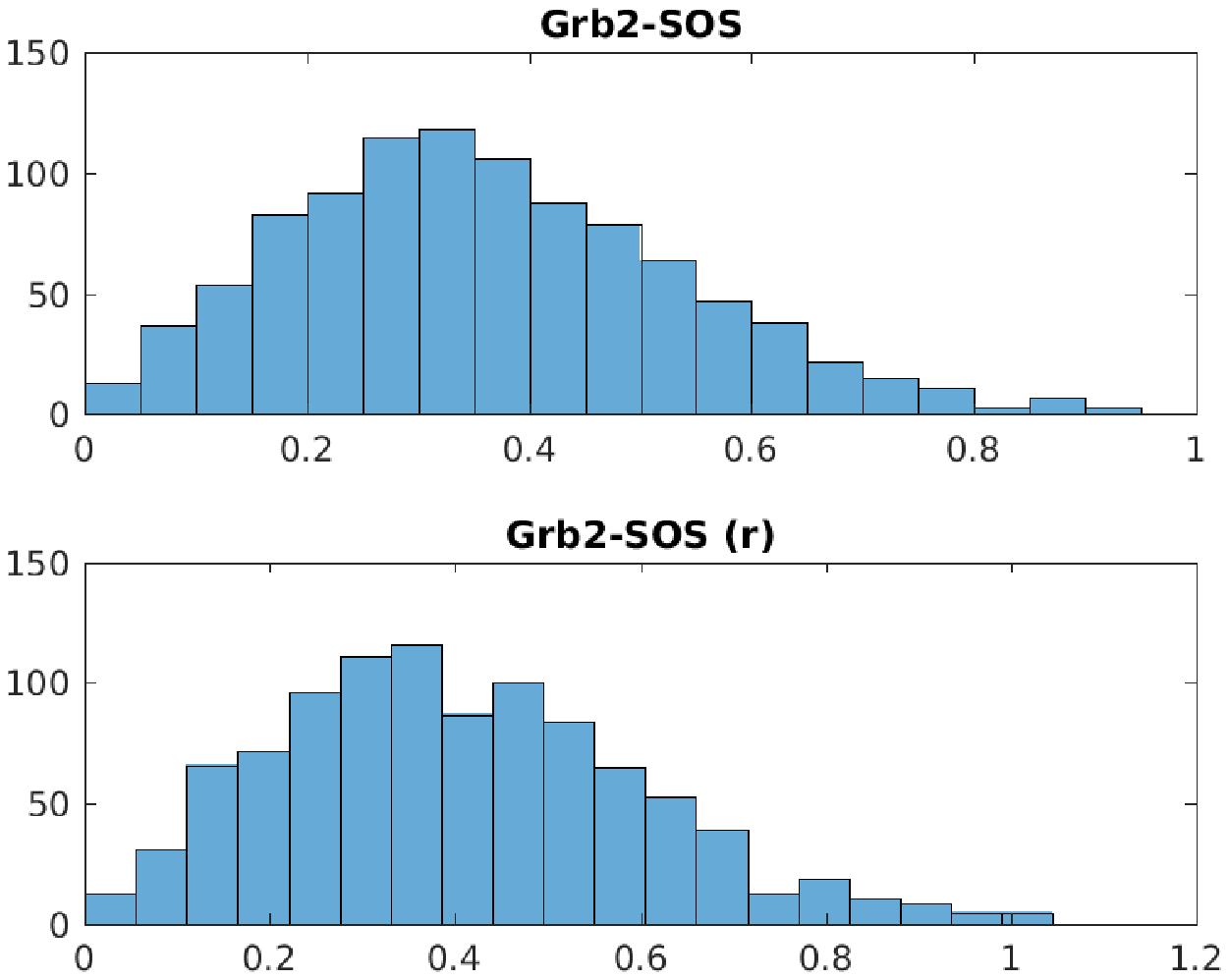}
\end{minipage}
\caption{\label{fig:EGFRs-hist} Histograms for 3 representative resolved species with non-Gaussian behaviour showing tight agreements, stochastic growth factor receptor model.}
\end{figure}

\section{Connections with related reduction methods}
\label{sec:comparison}
In this section we briefly discuss related state of the art on model reduction for biochemical reaction networks.

\emph{Sensitivity-based reduction methods. }
The goal of sensitivity analysis is to determine how certain quantities of interest of the system vary under perturbations to  model parameters and/or state variables. To reduce a given system by using this technique, the most common approach is to eliminate the least sensitive parameters and/or state variables. 
One of the advantages of sensitivity-based methods is the meaning preservation of the state variables and reaction channels.

\noindent
Local sensitivity-based reduction methods \cite{liu2004sensitivity,DEGENRING2004729, apri2012complexity, turanyi1990sensitivity, tomlin1995reduced, liu2004sensitivity} usually require to solve the system \eqref{eq:adjoint}, and therefore are not suitable for high-dimensional networks (see Remark \ref{rem:adjoint}) not only due to the high dimension (number of parameters times number of state variables) but also the stiffness of the system to solve. Our method only requires to compute the pathwise FIM which is of the order of the number of parameters. A typical drawback of sensitivity-based model reduction approaches is that the elimination of low -sensitivity parameters may lead to unsatisfactory results. An example of this issue is presented in \cite{chassagnole2002dynamic}, for which the sensitivities of some of the reaction channels are close to zero, however the removal of these reactions would result in the shutdown of the whole reaction network. We applied our method to this model obtaining a substantial reduction without compromising the dynamics of any of the species present in the original model. 
If a matrix of sensitivity indexes is feasible to compute, principle component analysis (PCA) is a commonly used method to rank reaction channels and then determine which ones can be eliminated \cite{turanyi1989reaction,degenring2004sensitivity,smets2002sensitivity}. As we previusly mentioned, our method is amenable to combine with PCA by applying this analysis to extract further information from the pFIM, especially taking into account its block diagonal structure. We do not pursue this direction any further here.

\noindent
Finally, the use of global sensitivity analysis for model reduction of biochemical reaction networks is still an open problem, due to the extremely high computational work requirements of these methods (see \cite{maurya2005reduced,jayachandran2014optimal}). We anticipate that the main blocks of our method (i.e. model selection by means of the pFIM and its stoichiometry matrix and the use of a loss function in the macroscopic space for data training) can be of great help in applying global sensitivity analysis for model reduction of high-dimensional systems.

\emph{Optimization approaches. }
A technique that is usually related with sensitivity-based reduction methods is the optimization approach. The aim is to reduce a system by testing candidate reduced models while minimizing an error metric to choose  the best one.
The optimization techniques vary in construction process of the set of candidate reduced models. 

\noindent
In \cite{maurya2005reduced, maurya2009mixed}, the authors present a method that combines a model reduction technique together with a parameter estimation algorithm. In this work, in order to reduce the number of parameters, the least influential reaction rates are just set to zero. In this optimisation problem, the authors use a genetic algorithm to
simultaneously identify parameter values and further eliminate unimportant reactions. 
A similar approach is used in \cite{hangos2013model}, for the case of  polynomial or rational propensity functions, based on a integer quadratic programming optimization. This approach also contains rate coefficients estimation in the reduced model to minimize the defined model error. This approach is demonstrated by the authors by reducing a model of the Arabidopsis thaliana circadian clock involving 7 state variables and 27 reactions, from \cite{locke2005extension}. Even though we do not consider this model as high-dimensional, we applied our reduction method and obtained the same reduced model as in \cite{locke2005extension}. 

\noindent
In \cite{anderson2011model,prescott2012guaranteed}, the authors proposed a 
method to calculate the error associated with a model reduction
algorithm. This approach is based on the error between observables of the original and the reduce system.  A worst-case error type of bound between the original and the reduced system in form of sum of squares are used for developing an optimization-based method for model reduction.

\emph{Timescale-based reduction methods. }
Alternatively, model reduction can be carried out by by exploiting timescale differences that are often present in biochemical systems
\cite{III201501}. Timescale analysis methods are one of the most widely used approaches for model reduction in the literature. These methods aim to partition the reaction network into different timescales by exploiting the several orders of magnitude difference that usually exist between reaction channel rates. This timescale difference allows to reduce a given model as certain species or reaction channels can be assumed to be constant with respect to the timescale of interest.

\noindent
A group of timescale exploitation methods that is close to our method is based on identifying species or reactions which can be classified to be on a fast dynamics regime in comparison with the other ones, and therefore partitioning the reaction network into fast and slow components. Once such a partition has been found, the reduction of the system is achieved by means of the application of singular perturbation techniques. These methods are based on Tikhonov's singular perturbation theory for the reduction of first order ODE's going back at least to \cite{tikhonov1952systems}. 
The aforementioned partition could be done in terms of species or reaction channels.
Among the species-based partitioning methods the quasi-steady-state approximation (QSSA) is well-known for its application to the reduction of the Michaelis–Menten equation \cite{briggs1925note}. This method is limited to models in which species exhibit a clear separation in timescales. For low-dimensional systems, it is feasible to search for such a partition, by the use of additional intuitive information about the system. Unfortunately, for high-dimensional systems these methods are usually prohibitive due to the combinatorial nature of different
model representations \cite{rou01, kooi02, radulescu2008robust, petrov2007reduction,Schneider2000}. Other works aim to provide algorithmic procedures for determining which species can be considered fast (see for instance  \cite{choi2008new,west2015method}).
The reaction-based partitioning methods assume that certain reactions occur fast enough such that it can be assumed that they reach an equilibrium immediately, so these methods are referred as rapid equilibrium approximation. This is de idea behind the Michaelis-Menten original approximation \cite{menten1913kinetik}. Related works include \cite{gerdtzen2004non, prescott2012guaranteed, noel2014tropicalization}.
The main difficulty associated with the previous timescale partitioning method is to find a formulation such that the timescales differences between species or reaction channels are clearly exposed. 

\noindent
A second group of timescale methods aims to obtain a transformation of the state variables to obtain a reduced model where timescale separations are clear.
Such approaches often lead to more accurate and substantial model reductions than the previous methods but transformations often difficult the biochemical interpretation of the reduced model.
An example of these methods is the eigenbasis transformation of the state variables, as in the intrinsic low-dimensional manifold method (ILDM), originally developed in \cite{maas1992simplifying}. A brief review for the biochemical reaction network case is given in \cite{vallabhajosyula2005conservation}. Other applications of this method is given in \cite{zobeley2005new, surovtsova2009accessible}.
Another transformation method that is widely used is the computational singular perturbation (CSP), originally published in \cite{lam1985singular} and further developed in \cite{lam1991conventional, lam1994csp}. A rigorous analysis of this method and its comparison with the ILDM is given in \cite{zagaris2004analysis}. This aims to transform the set of reaction channels into a different basis to clearly enhance timescale differences between the transformed reaction channels. The fast transformed reaction channels are assumed to equilibrate instantaneously, and then their dynamical contribution is neglected  in the reduced model. Some works in this direction include \cite{surovtsova2012simplification,kourdis2010physical}.

\emph{Lumping-based reduction methods. }
Lumping is another wide class of model reduction methods, usually applied to linear systems. How a proper lumping can be formulated for a nonlinear system
is still an open problem. Methods presented in the literature for non-linear systems usually are based on trial and error approaches which may be computationally prohibitive for high-dimensional systems. This class of methods originated in the dynamical systems literature \cite{wei1969lumping, kuo1969lumping}. The idea is to remove sets of state variables and replace them with
 new lumped variables that represent mappings from the original ones. Applications of lumping to biochemical reaction networks are \cite{dano2006reduction,dokoumetzidis2009proper,koschorreck2007reduced,sunnaaker2011method}

\section{Conclusions}
\label{sec:conc}
In this work we presented an efficient and principled model reduction method for high-dimensional deterministic and stochastic reaction networks. 
The goal of model reduction is to construct a simpler model in terms of a reduced set of state variables, parameters and reaction channels. In general, there is no universal model reduction method  which can be considered superior for every reaction network, especially in the high-dimensional case. The appropriateness of each method is entangled to the nature of the model that is intended to reduce. Our method is particularly suited for high-dimensional biochemical systems, applicable to any smooth propensity function and no equilibrium assumptions are required. 
Despite our method is designed for high-dimensional models, it is well suited for many low-dimensional examples found in the literature although alternative reduction methods may be better suited for low dimensional models, such as transforming methods or lumping approaches (see Section \ref{sec:comparison} for brief discussions of these methods). Our method is also well suited for pure jump models, in Langevin regimes or deterministic ones.

Our method is based on pathwise information metrics to screen-out insensitive parameters via the analysis of the pathwise Fisher Information Matrix. 
By means of a simple loss function for time-series data based on path-space information theory, candidate reduced models can be fitted to full model time series data and therefore the resulting reduced model dynamics conform an accurate approximation to the full model dynamics on a given time interval. 
The main features of this approach are summarized as follows:
i) scalable method targeted to high-dimensional deterministic and stochastic biochemical systems; ii) applicable to stiff and non-linear models; iii) independent of particular quantities of interest; iv) given a user-defined information threshold, the information loss of the reduced model is controlled; v) applicable to any smooth propensity function form and no information is required concerning which reactions are in partial equilibrium nor which species are assumed to be in steady state; vi) no biochemical knowledge is required to reduced the model; vii) intuitive and automatizable method; viii) it is \emph{not} a data-hungry method.

As previously mentioned, our method has a clear advantage with respect to sensitivity-based and optimization-based methods that require to solve classical sensitivity systems (e.g. \ref{eq:adjoint}), since our method only requires to compute the pathwise FIM \eqref{eq:pFIM}. We recall that the computational work in the first case is of the order of the number of state variables \emph{times} parameters while in the second is of the order of the number of parameters only (see Remark \ref{rem:adjoint}). 
One of the features of our method is that it preserves the biochemical meaning of the variables and reaction channels. However, when this requirement is not relevant for a particular application, transformation-based reduction methods like lumping or timescale transforming methods may be a superior alternative depending on the dimension of the system (see Section \ref{sec:comparison}).

Even though our method uses the stoichiometry of the network to choose sensitive species, further information may be extracted from it as well. For example, graph-theoretic tools may allow to dissect the stoichiometry graph into functional modules and determine different types of interactions between these modules (see for instance \cite{hartwell1999molecular,saez2005dissecting,saez2004modular}). Biochemical information can also be included in this analysis, as for example retroactivity, i.e., which sub-modules influences (or not) each other (see for instance \cite{conzelmann2004reduction,wolf2003motifs}). In \cite{craciun2008identifiability}, the authors use graph-theoretic tools to characterize conditions for two different reaction networks to have the same dynamics, in the case of mass action kinetics dynamics.

We finally notice that in our method, the reduced model depends on the time interval $[0,T]$ considered. However, our methodology can be also applied to long-time horizons by working with the relative entropy rate
$$
\rer{P}{Q} = \lim_{T \rightarrow \infty} \frac{1}{T} \re{P_{[0,T]}}{Q_{[0,T]}}\, ,
$$
where $P$ and $Q$ denote the corresponding stationary processes. We refer to \cite{DKPP16} and references therein.
 
\section*{Acknowledgments}

\noindent
The research of M.K. was partially supported by the Defense Advanced Research Projects Agency (DARPA) EQUiPS program under the grant W911NF1520122. The research of P.V. was supported by the Defense Advanced Research Projects Agency (DARPA) EQUiPS program under the grant W911NF1520122.

%
%

\clearpage
\section*{Appendix: Relative entropy and pFIM decomposition}
\label{sec:app}

\def\PATHS{Q_{0:T}^\theta}
\def\IPATHS#1{Q_{#1}^\theta}
\def\PATHSAPP{Q_{0:T}^{\theta+\epsilon}}
\def\IPATHSAPP#1{Q_{#1}^{\theta+\epsilon}}

\begin{thm}
\label{thm:pRE}
The pathwise relative entropy for a discrete-time Markov chain
can be decomposed as
\begin{equation}
\re{P_{0:T}}{Q^\theta_{0:T}}= \re{\INIT}{\INITAPP} + \sum_{i=1}^T \re{P_i}{Q^\theta_i} \ ,
\label{Rel:entr:DTMC}
\end{equation}
where the quantity 
\begin{equation}
\re{P_i}{Q^\theta_i}
= \mathbb E_{\nu_{i-1}}\Big[\int_\latt p(x,x')\log \frac{p(x,x')}{q^{\theta}(x,x')}dx'\Big] \, .
\label{inst:Rel:entr:DTMC}
\end{equation}
can be interpreted as the instantaneous relative entropy.
\end{thm}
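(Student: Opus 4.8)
The plan is to exploit the Markov product factorizations \eqref{eq:pdist_CLE} and \eqref{eq:ppdist_CLE} to turn the single path-space log-likelihood ratio into an additive sum, and then to marginalize each summand down to the coordinates on which it actually depends. Throughout I assume $P_{0:T}$ is absolutely continuous with respect to $Q^\theta_{0:T}$, which is inherited from absolute continuity of $\INIT$ with respect to $\INITAPP$ and of each $p(x,\cdot)$ with respect to $q^\theta(x,\cdot)$, so that every term below is well defined.

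First I would write the Radon--Nikodym derivative along a path. Since both $P_{0:T}$ and $Q^\theta_{0:T}$ factor as an initial density times a product of one-step transition kernels, the logarithm of their ratio separates additively:
\[
\log \frac{dP_{0:T}}{dQ^\theta_{0:T}}(x_0,\ldots,x_T) = \log\frac{\INIT(x_0)}{\INITAPP(x_0)} + \sum_{i=1}^T \log\frac{p(x_{i-1},x_i)}{q^\theta(x_{i-1},x_i)}\,.
\]
Taking $\mathbb E_{P_{0:T}}[\,\cdot\,]$ of both sides and using linearity then produces $\re{\INIT}{\INITAPP}$ from the first term (it depends on $x_0$ alone, whose $P_{0:T}$-marginal is $\INIT$) plus a sum of $T$ expectations of the one-step log-ratios.

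The heart of the argument is identifying each summand. Each integrand $\log\frac{p(x_{i-1},x_i)}{q^\theta(x_{i-1},x_i)}$ depends only on the pair $(x_{i-1},x_i)$, so its $P_{0:T}$-expectation collapses to an integral against the joint marginal of those two coordinates. The key observation is that this joint marginal equals $\nu_{i-1}(x_{i-1})\,p(x_{i-1},x_i)$: integrating out the future coordinates $x_{i+1},\ldots,x_T$ contributes factors $\int_\latt p(x_k,x_{k+1})\,dx_{k+1}=1$ since each kernel is a probability density, while integrating out the past coordinates $x_0,\ldots,x_{i-2}$ reproduces exactly the defining integral of $\nu_{i-1}$. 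Substituting yields
\[
\mathbb E_{P_{0:T}}\!\Big[\log\frac{p(x_{i-1},x_i)}{q^\theta(x_{i-1},x_i)}\Big]
= \mathbb E_{\nu_{i-1}}\!\Big[\int_\latt p(x,x')\log\frac{p(x,x')}{q^\theta(x,x')}\,dx'\Big]
= \re{P_i}{Q^\theta_i}\,,
\]
which is precisely \eqref{inst:Rel:entr:DTMC}.

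I expect the only genuine obstacle to be bookkeeping in that marginalization: one must verify that the forward kernels each integrate to one and that the backward integration telescopes into $\nu_{i-1}$ exactly as defined, so that no residual coupling between $(x_{i-1},x_i)$ and the eliminated coordinates survives (Fubini-type interchange being harmless here because the integrand has a fixed sign after splitting, or by the integrability guaranteed by absolute continuity). Collecting the initial contribution together with the $T$ instantaneous terms then gives the decomposition \eqref{Rel:entr:DTMC}, completing the proof.
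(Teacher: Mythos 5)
Your proposal is correct and follows essentially the same route as the paper's own proof: factor the Radon--Nikodym derivative via the Markov product structure, split the logarithm into the initial term plus the $T$ one-step log-ratios, and marginalize each summand so that the future kernels integrate to one and the past integrations telescope into $\nu_{i-1}$. The only difference is that you spell out the marginalization step more explicitly than the paper does, which is a point in your favor rather than a deviation.
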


\begin{proof}
The proof of this theorem can be found in \cite[Ch. 2]{Cover:91}, 
but for the sake of completeness we present it here. The Radon-Nikodym derivative
of $P_{0:T}$ w.r.t. $Q^\theta_{0:T}$ takes the form
\begin{equation*}
\frac{dP_{0:T}}{dQ^\theta_{0:T}}\big( (x_i)_{i=0}^T \big) =
\frac{\INIT(x_0)\prod_{i=0}^{T-1}p(x_i,x_{i+1})}{\INITAPP(x_0)\prod_{i=0}^{T-1}q^{\theta}(x_i,x_{i+1})}\, ,
\end{equation*}
which is well-defined since the transition probabilities are always positive. Then,
\begin{equation*}
\begin{aligned}
&\re{P_{0:T}}{Q^\theta_{0:T}} \\
&= \int_\latt\cdots\int_\latt \INIT(x_0)\prod_{j=1}^T p(x_{j-1},x_j)
\log \frac{\nu(x_0)\prod_{i=1}^T p(x_{i-1},x_i)}
{\INITAPP(x_0)\prod_{i=1}^T q^{\theta}(x_{i-1},x_i)} dx_0\ldots dx_T \\
&= \int_\latt\cdots\int_\latt \INIT(x_0)\prod_{j=1}^T p(x_{j-1},x_j) \log \frac{\INIT(x_0)}{\INITAPP(x_0)} dx_0\ldots dx_T \\
& \quad + \sum_{i=1}^T \int_\latt\cdots\int_\latt \INIT(x_0)\prod_{j=1}^T p(x_{j-1},x_j) 
\log \frac{p(x_{i-1},X_i)}{q^{\theta}(x_{i-1},x_i)}  dx_0\ldots dx_T \\
%
%
&= \re{\INIT}{\INITAPP} + \sum_{i=1}^T \re{P_i}{Q^\theta_i} \ ,
\end{aligned}
\end{equation*}
where $\re{\INIT}{\INITAPP}=\mathbb E_{\INIT}\Big[\log \frac{\INIT(x)}{\INITAPP(x)}\Big]$ is the relative entropy of the initial distributions, while 
 the instantaneous relative entropy is
\begin{equation*}
\re{P_i}{Q^\theta_i} =
\mathbb E_{\nu_{i-1}}\Big[\int_\latt p(x,x')\log \frac{p(x,x')}{q^{\theta}(x,x')}dx'\Big] \ .
\end{equation*}
\end{proof}

\begin{thm}
\label{thm:pFIM}
Under smoothness assumption on the transition probability function
w.r.t. the parameter vector $\theta$, the pFIM can be also decomposed as
\begin{equation}
\FISHER{\PATHS} = \FISHER{\nu^\theta} + \sum_{i=1}^T  \FISHERR{\IPATHS{i}} \ ,
\label{path:FIM:DTMC}
\end{equation}
where $ \FISHER{\nu^\theta}$ is the FIM of the initial distribution and the instantaneous FIM is given by
\begin{equation}
\FISHERR{\IPATHS{i}} =
\expt{\int_\latt p^\theta(x,x')\nabla_\theta \log p^\theta(x,x')\nabla_\theta \log p^\theta(x,x')^{tr} dx'}{\nu_{i-1}} \, .
\label{inst:FIM:DTMC}
\end{equation}
\end{thm}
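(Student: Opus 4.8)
The plan is to compute the pathwise FIM directly from its definition as the expected outer product of the score $\nabla_\theta \log \PATHS$ under $\PATHS$, exploiting the Markov factorization so that all couplings between distinct time steps drop out. First I would use the path factorization \eqref{eq:ppdist_CLE} to write the score additively,
\[
\nabla_\theta \log \PATHS = \nabla_\theta \log \nu^\theta(x_0) + \sum_{i=1}^T \nabla_\theta \log q^\theta(x_{i-1},x_i),
\]
and then form the outer product $\nabla_\theta \log \PATHS \, (\nabla_\theta \log \PATHS)^{tr}$, whose expectation splits into ``diagonal'' contributions (the squared score of each individual factor) together with ``off-diagonal'' cross terms coupling two distinct factors.

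The key step, and the one I expect to be the main obstacle, is showing that every cross term vanishes. The tool is the score identity
\[
\int_\latt q^\theta(x,x')\, \nabla_\theta \log q^\theta(x,x')\, dx' = \nabla_\theta \int_\latt q^\theta(x,x')\, dx' = 0,
\]
valid because each transition kernel integrates to one; the smoothness hypothesis is exactly what licenses this interchange of $\nabla_\theta$ with the integral. For a cross term coupling steps $i<j$ I would condition on the trajectory up to time $j-1$: the step-$i$ factor is then measurable, while the conditional mean of the step-$j$ score is zero by the identity above applied to $q^\theta(x_{j-1},\cdot)$, so the term vanishes by the tower property. The same conditioning argument kills the cross terms involving the initial score $\nabla_\theta \log \nu^\theta$.

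Only the diagonal terms survive. The initial factor contributes $\FISHER{\nu^\theta}$ directly. For the step-$i$ transition factor the integrand depends on $(x_{i-1},x_i)$ alone, so marginalizing the remaining coordinates collapses the full path expectation to an expectation over the time-$(i{-}1)$ marginal of $\PATHS$, reproducing exactly the instantaneous FIM \eqref{inst:FIM:DTMC}. Summing over $i$ then yields the claimed decomposition \eqref{path:FIM:DTMC}.

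As an alternative consistent with the paper's view of the FIM as a Hessian, one may instead apply Theorem \ref{thm:pRE} with first argument $\PATHS$ and second argument $\PATHSAPP$ to obtain $\re{\PATHS}{\PATHSAPP} = \re{\nu^\theta}{\nu^{\theta+\epsilon}} + \sum_{i=1}^T \re{\IPATHS{i}}{\IPATHSAPP{i}}$, and then differentiate twice in $\epsilon$ at $\epsilon=0$; applying the Taylor expansion \eqref{eq:RE_FIM} term by term and using linearity of the Hessian recovers \eqref{path:FIM:DTMC} without re-examining the cross terms.
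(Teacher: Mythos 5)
Your proposal is correct, and its primary argument takes a genuinely different route from the paper. The paper never touches the path-level score: it Taylor-expands each instantaneous relative entropy $\re{Q^\theta_i}{Q^{\theta+\epsilon}_i}$ directly, writing $\Delta p = p^{\theta+\epsilon}-p^\theta$, using $\int_\latt \Delta p(x,x')\,dx'=0$ to kill the first-order term of $\log\bigl(1+\Delta p/p^\theta\bigr)$, and reading off the quadratic coefficient as $\FISHERR{Q_i^\theta}$; the sum over $i$ then comes for free from the relative-entropy decomposition of Theorem \ref{thm:pRE}. You instead expand the additive score $\nabla_\theta\log Q^\theta_{0:T}$, form its expected outer product, and kill the cross terms by the score identity plus the tower property -- in effect a martingale (uncorrelated-increments) argument. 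Your route exposes more clearly \emph{why} the decomposition holds (the score increments are conditionally centered, hence uncorrelated) and is arguably more elementary; the paper's route buys the explicit $O(|\epsilon|^3)$ remainder in \eqref{eq:RE_FIM}, which is used elsewhere for the sensitivity bounds, and stays anchored to the paper's definition of the pFIM as the Hessian of the pathwise relative entropy. That last point is the one small caveat: since the paper \emph{defines} $\FISHER{Q^\theta_{0:T}}$ as that Hessian, your primary argument implicitly invokes the standard (smoothness-licensed) identification of the Hessian with the expected outer product of the path score; your closing alternative -- apply Theorem \ref{thm:pRE} to the pair $\bigl(Q^\theta_{0:T},Q^{\theta+\epsilon}_{0:T}\bigr)$ and expand term by term -- is essentially the paper's own proof and closes that gap, so nothing is missing.
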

\begin{proof}
This proof is similar to the proof presented in \cite{PK2013,PLOS:AKP}. We recap it here with minor but
necessary adaptations. 

Let $\Delta p(x,x') := p^{\theta+\epsilon}(x,x') - p^\theta(x,x')$. Thus, we have that
\begin{equation*}
\begin{aligned}
&\re{\IPATHS{i}}{\IPATHSAPP{i}}
= - \int_\latt\int_\latt \nu_{i-1}^\theta(x) p^\theta(x,x') \log \left(1+ \frac{\Delta p(x,x')}{p^\theta(x,x')}\right) dx dx' \\
&= - \int_\latt\int_\latt \left[\nu_{i-1}^\theta(x)\Delta p(x,x')
-  \frac{1}{2}\nu_i^\theta(x)\frac{\Delta p(x,x')^2}{p^\theta(x,x')}
+ O(|\Delta p(x,x')|^3) \right] dx dx' \, .
\end{aligned}
\end{equation*}
Moreover, for all $x\in \latt$, it holds that
\begin{equation*}
\int_\latt \Delta p(x,x')dx' = \int_\latt p^{\theta+\epsilon}(x,x')dx'
- \int_\latt p^{\theta}(x,x')dx' = 0.
\end{equation*}
By using the smoothness assumption and Taylor-expanding $\Delta p$ we obtain
\begin{equation*}
\delta p(x,x') = \epsilon^{tr} \nabla_\theta p^\theta(x,x') + O(|\epsilon|^2)\ .
\end{equation*}
Finally, we have that 
\begin{equation*}
\begin{aligned}
&\re{\IPATHS{i}}{\IPATHSAPP{i}} 
= \frac{1}{2} \int_\latt\int_\latt  \nu_{i-1}^\theta(x) \frac{(\epsilon^{tr} \nabla_\theta p^\theta(x,x'))^2}{p^\theta(x,x')} dx dx' + O(|\epsilon|^3) \\
&= \frac{1}{2}\epsilon^{tr}\Big( \int_\latt\int_\latt \nu_{i-1}^\theta(x) p^\theta(x,x) \nabla_\theta \log p^\theta(x,x')
 \nabla_\theta \log p^\theta(x,x')^{tr} dx dx' \Big) \epsilon + O(|\epsilon|^3) \\
&= \frac{1}{2} \epsilon^{tr} \FISHERR{\IPATHS{i}} \epsilon + O(|\epsilon|^3)
\end{aligned}
\end{equation*}
where
\begin{equation*}
\FISHERR{\IPATHS{i}} =  \expt{\int_\latt p^\theta(x,x') \nabla_\theta 
\log p^\theta(x,x') \nabla_\theta \log p^\theta(x,x')^{tr} dx'}{\nu_{i-1}^\theta}
\end{equation*}
is the instantaneous FIM associated to the instantaneous relative entropy.
Consequently, the pFIM $\FISHER{\PATHS}$, i.e., the Hessian of the
pathwise relative entropy at point $\theta$, is given by
\begin{equation*}
\FISHER{\PATHS} = \FISHER{\nu^\theta} + \sum_{i=1}^T  \FISHERR{\IPATHS{i}} \ ,
\end{equation*}
where $\FISHER{\nu^\theta} = \expt{\nabla_\theta \log\nu^\theta(x)\nabla_\theta \log\nu^\theta(x)^{tr}}{\nu^\theta}$
is the FIM of the initial distribution.
\end{proof}

\clearpage
\section*{Appendix: Proof of Theorem \ref{thm:argmin}}
\label{sec:thm_min_proof}
The following Lemma is an instrumental result for proving this theorem.
\begin{lemm}
\label{lemm:argmin}
For $\Dt_i > 0$ and $i \in \{1,2,...,T\}$ we have 
$$
\nabla_\theta \rer{P_i}{Q_i^\theta} = \nabla_\theta \left[R_i(\theta) + \Dt_i M_i(\theta)\right] \, ,
$$
where $R_i(\theta)$ and $M_i(\theta)$ are defined as in Theorem \ref{thm:argmin}.
\end{lemm}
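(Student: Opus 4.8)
The plan is to reduce the instantaneous relative entropy to the Kullback--Leibler divergence between two $\bar d$-dimensional Gaussians, for which a closed form is available, and then to observe that every term not appearing in $R_i(\theta)+\Dt_i M_i(\theta)$ is independent of $\theta$ and hence killed by $\nabla_\theta$. First I would insert the factorizations $p(x,x') = p^{(1)}(x,\bar x')\,p^{(2)}(x,\bar x'|\hat x')$ and $q^\theta(x,x') = r(x'|\bar x')\,p^\theta(\bar x,\bar x')$ from \eqref{eq:micro_dens} into the integrand of $\rer{P_i}{Q_i^\theta}$ and split the logarithm:
\[
\log\frac{p(x,x')}{q^\theta(x,x')} = \log\frac{p^{(1)}(x,\bar x')}{p^\theta(\bar x,\bar x')} + \log\frac{p^{(2)}(x,\bar x'|\hat x')}{r(x'|\bar x')}.
\]
Since $r$ is independent of $\theta$ by construction, the second summand contributes nothing to $\nabla_\theta$. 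The argument of the first summand depends on $x'$ only through $\bar x' = \CG x'$, so integrating out $\hat x'$ against the normalized complementary factor leaves
\[
\int_\latt p^{(1)}(x,\bar x')\,\log\frac{p^{(1)}(x,\bar x')}{p^\theta(\bar x,\bar x')}\,d\bar x',
\]
the relative entropy between the macroscopic marginal of $p$ and the reduced transition $p^\theta$.

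Next I would identify both densities as Gaussians. Because $\CG$ consists of distinct standard-basis rows (it is the nontrivial block of a permutation matrix), $\bar x' = \CG x'$ is a subvector of $x'$, so the macroscopic marginal of the Gaussian $p(x,\cdot)$ in \eqref{eq:microp} is its pushforward under $\CG$, namely $\mathcal N(\CG m_\Dt(x),\,\Dt\,\CG\Sigma(x)\CG^{tr})$; likewise $p^\theta(\bar x,\cdot)$ is $\mathcal N(\bar m_\Dt(\bar x;\theta),\,\Dt\,\bar\Sigma(\bar x;\theta))$. I would then apply the standard formula for the divergence between $\mathcal N(\mu_1,C_1)$ and $\mathcal N(\mu_2,C_2)$,
\[
\tfrac12\Big[\tr{C_2^{-1}C_1} - \bar d + (\mu_2-\mu_1)^{tr}C_2^{-1}(\mu_2-\mu_1) + \log\tfrac{\det C_2}{\det C_1}\Big].
\]

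The computation then telescopes cleanly. The common factor $\Dt$ cancels in $\tr{C_2^{-1}C_1} = \tr{\CG\Sigma(x)\CG^{tr}\bar\Sigma^{-1}(\bar x;\theta)}$ and in $\log(\det C_2/\det C_1) = -\log\det(\CG\Sigma(x)\CG^{tr}\bar\Sigma^{-1}(\bar x;\theta))$, which after the outer $\expt{\cdot}{\nu_{i-1}}$ reproduce exactly $2R_i(\theta)$ as in \eqref{eq:ayb}. For the quadratic term, $\bar x = \CG x$ gives $\mu_2-\mu_1 = \Dt(\CG b(x)-\bar b(\bar x;\theta))$, whence $(\mu_2-\mu_1)^{tr}C_2^{-1}(\mu_2-\mu_1) = \Dt\,(\bar b(\bar x;\theta)-\CG b(x))^{tr}\bar\Sigma^{-1}(\bar x;\theta)(\bar b(\bar x;\theta)-\CG b(x))$, matching $2\Dt_i M_i(\theta)$. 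Taking $\expt{\cdot}{\nu_{i-1}}$ and then $\nabla_\theta$, while discarding the constant $-\bar d/2$ and the $\theta$-independent $p^{(2)}/r$ piece, yields $\nabla_\theta\rer{P_i}{Q_i^\theta} = \nabla_\theta[R_i(\theta)+\Dt_i M_i(\theta)]$. I expect the main obstacle to be the rigorous justification of the marginalization step, namely that the macroscopic marginal of the microscopic Gaussian has mean $\CG m_\Dt(x)$ and covariance $\Dt\,\CG\Sigma\CG^{tr}$ and that interchanging the $d\hat x'$-integration with the logarithm is legitimate (it is, since the log-ratio is $\hat x'$-free); stating the lemma at the level of gradients rather than the entropies themselves is precisely what lets us discard the $\theta$-independent constants.
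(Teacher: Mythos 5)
Your proposal is correct and follows essentially the same route as the paper's proof: factorize $p$ and $q^\theta$ via $\CG$, split the logarithm, discard the $\theta$-independent reconstruction term, and reduce the instantaneous relative entropy to a $\bar d$-dimensional Gaussian-versus-Gaussian divergence whose trace, log-determinant, and quadratic pieces give $R_i$ and $\Dt_i M_i$ up to the constant $-\bar d/2$. The only difference is that you invoke the closed-form KL formula between two Gaussians, whereas the paper re-derives the trace term by an explicit change of variables $z = S(\bar x;\theta)(\bar x' - \CG m_{\Dt_i}(x))$; your shortcut is legitimate and slightly cleaner.
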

\begin{proof}
In order to compute $\rer{P_i}{Q_i^\theta}$ recall that the transition probability density of the Euler discretization of the microscopic CLE for $x' \in \mathbb{R}^d$ is given by
\begin{equation}
\label{eq:cle_trans}
p(x,x') = \frac{1}{Z_\Dt(x)} \exp \left\{ -\frac{1}{2\Dt}(x'-m_\Dt(x))^{tr} \Sigma^{-1}(x)(x'-m_\Dt(x)) \right\} \, ,
\end{equation}
where $m_\Dt(x) := x-b(x) \Dt$, $Z_\Dt(x) := \sqrt{(2 \pi \Dt)^d \det(\Sigma(x))}$ and $\Sigma(x) := \sigma(x) \sigma^{tr}(x)$.

Let $\CG: \mathbb{R}^d \rightarrow \mathbb{R}^{\bar d}$ be a linear  map as in \eqref{eq:var_map} such that,
$\Sigma(x)$ can be partitioned as
$$\left( \begin{array}{cc} \Sigma_{1,1}(x)& \Sigma_{1,2}(x) \\ \Sigma_{2,1}(x)& \Sigma_{2,2}(x) \end{array} \right) \, ,$$
where $\Sigma_{1,1}(x)= \CG \Sigma(x) \CG^{tr} \in \mathbb{R}^{\bar d}{\times} \mathbb{R}^{\bar d}$, $\Sigma_{2,2}(x)=\CG^\comp \Sigma(x) \CG^{\comp,tr} \in \mathbb{R}^{d-\bar d}{\times}\mathbb{R}^{d-\bar d}$ and $\Sigma_{2,1}(x)=\Sigma_{1,2}^{tr}(x)$.

Considering \eqref{eq:cle_trans} for $x' \in \mathbb{R}^d$ let $x' \mapsto (\CG x',\CG^\comp x') = (\bar x', \hat x')$, then the conditional distribution of $\hat x'$ given $\bar x '$ can be written as 
$$\hat x' | \bar x' \sim \mathcal{N}(\CG m_\Dt(x) + \Sigma_{2,1}(x) \Sigma_{1,1}^{-1}(x)(\bar x' {-} \CG^\perp m_{\Dt}(x))  , \,\,  \Sigma_{2,2}(x)-  \Sigma_{2,1}(x) \Sigma_{1,1}^{-1}(x) \Sigma_{1,2}(x)  ) \, .$$

This allows us to split the microscopic transition probability as 
\begin{equation}
\label{eq:micro_split}
p(x,x')=p^{(1)}(x,\bar x') p^{(2)}(x,\hat x'|\bar x') \, ,
\end{equation}
where 
$$
p^{(1)}(x,\bar x') = \frac{1}{Z^{(1)}_\Dt(x)} \exp \left\{ -\frac{1}{2\Dt}(\bar x'-\CG m_\Dt(x))^{tr} \Sigma_{1,1}^{-1}(x)(\bar x'-\CG m_\Dt(x)) \right\} \, ,
$$
with $Z^{(1)}_\Dt(x) := \sqrt{(2 \pi \Dt)^{\bar d} \det(\Sigma_{1,1}(x))}$, assuming the matrix $\Sigma_{1,1}$ is non-singular without losing generality.

We now describe the macroscopic CLE (corresponding to the reduced model), which is an approximation of the projected time series $(\CG x_i)_{i=0}^T$. Its Euler discretization is given by 
$$
\bar x_{k+1} = \bar x_k + \bar b(\bar x_k; \theta) \Dt + \bar \sigma(\bar x_k;\theta) \Delta \bar W_k \, ,
$$
where 
$$\bar b(\bar x;\theta)=\bar \nu \bar a(\bar x;\theta) \quad \text{and} \quad \bar \sigma(\bar x;\theta) = \bar \nu \sqrt{ \diag (\bar a(\bar x;\theta)}  \, ,$$
with $\Delta \bar W_k \sim \mathcal{N}(0,\Dt I)$ independent Gaussian increments.

This macroscopic (reduced) model is a parameterized approximation of the projected microscopic (full) model, with the following conditional density for $\bar x' \in \mathbb{R}^{\bar d}$ given $\bar x \in \mathbb{R}^{\bar d}$
\begin{equation*}
\label{eq:tran_macro}
p^\theta(\bar x,\bar x') = \frac{1}{\bar Z_\Dt(\bar x;\theta)} \exp \left\{ -\frac{1}{2\Dt}(\bar x'-\bar m_\Dt(\bar x;\theta))^{tr} \Sigma^{-1}(\bar x;\theta)(\bar x'-\bar m_\Dt(\bar x;\theta)) \right\} \, ,
\end{equation*}
where $\bar m_\Dt(\bar x; \theta) := \bar x- \bar b(\bar x;\theta) \Dt$, $\bar Z_\Dt(\bar x; \theta) := \sqrt{(2 \pi \Dt)^{\bar d} \det(\bar \Sigma(\bar x;\theta))}$ and $\bar \Sigma(\bar x;\theta) = \bar \sigma(\bar x;\theta)\bar \sigma^{tr}(\bar x;\theta)$, assuming $\bar \Sigma$ is non-singular. In the singular case, a Moore-Penrose pseudo-inverse must be used instead of $\bar \Sigma^{-1}$, and a pseudo determinant instead of the determinant.

In order to compare the microscopic density $p$ with the approximating macroscopic density $p^\theta$, we consider a reconstructed transition probability $q^\theta$ in the microscopic space, written in terms of $p^\theta$, as follows
\begin{equation}
\label{eq:macro_split}
q^\theta(x,x') = r(x'|\bar x') p^\theta(\bar x,\bar x') \, ,
\end{equation}
where $r(x'|\bar x')$ is any probability associated with the reconstruction, independent of $\theta$.
Now we can write 
\begin{equation}
\label{fact_log1}
\tag{*}
\log \frac{p(x,x')}{q^\theta(x,x')} = \log \frac{p^{(1)}(x,\bar x') p^{(2)}(x,\hat x'|\bar x')}{ p^\theta(\bar x,\bar x')r(x'|\bar x') } = \log \frac{p^{(1)}(x,\bar x')}{p^\theta(\bar x,\bar x')} + \log\frac{p^{(2)}(x,\hat x'|\bar x')}{r(x'|\bar x')} \, ,
\end{equation}
where the second term does not depend on $\theta$, so it can be ignored.

Furthermore we have,
\begin{align}
\tag{**}
\label{fact_log2}
\log \frac{p^{(1)}(x,\bar x')}{p^\theta(\bar x,\bar x')} &=-\frac{1}{2\Dt_i}(\bar x'-\CG m_{\Dt_i}x))^{tr} \Sigma_{1,1}^{-1}(x)(\bar x'-\CG m_{\Dt_i}(x)) \\ \nonumber & + \log \frac{\bar Z_{\Dt_i}(\bar x;\theta)}{Z^{(1)}_{\Dt_i}(x)}+ \frac{1}{2\Dt_i}(\bar x'-\bar m_{\Dt_i}(\bar x;\theta))^{tr} \Sigma^{-1}(\bar x;\theta)(\bar x'-\bar m_{\Dt_i}(\bar x;\theta)) \, .
\end{align}
The first term does not depend on $\theta$ so it can be also ignored.

Then using \eqref{eq:micro_split}, \eqref{eq:macro_split}, and taking into account \eqref{fact_log1} and \eqref{fact_log2}, we have
\begin{align*}
\nabla_\theta \rer{P_i}{Q_i^\theta} &= \nabla_\theta \expt{\int p(x,x') \log \frac{p(x,x')}{q^\theta(x,x')}dx'}{\nu_{i-1}} \\ &=\nabla_\theta \expt{\int \int p^{(1)}(x,\bar x') p^{(2)}(x,\hat x'|\bar x') \left(
 \log \frac{\bar Z_{\Dt_i}(\bar x;\theta)}{Z^{(1)}_{\Dt_i}(x)}+ K_{\Dt_i}(\bar x, \bar x';\theta)\right)  d\hat x' d\bar x'}{\nu_{i-1}} \\
 &= \nabla_\theta \expt{\int p^{(1)}(x,\bar x') \left(
 \log \frac{\bar Z_{\Dt_i}(\bar x;\theta)}{Z^{(1)}_{\Dt_i}(x)}+ K_{\Dt_i}(\bar x, \bar x';\theta) \right) d \bar x'}{\nu_{i-1}}
  \, ,
\end{align*}
where $K_{\Dt_i}(\bar x, \bar x';\theta) := \frac{1}{2\Dt_i}(\bar x'-\bar m_{\Dt_i}(\bar x;\theta))^{tr} \bar \Sigma^{-1}(\bar x;\theta)(\bar x'-\bar m_{\Dt_i}(\bar x;\theta))  $. The last equality is because $p^{(2)}$ is a density on $\hat x'$.

\begin{align*}
\nabla_\theta \rer{P_i}{Q_i^\theta} = \nabla_\theta & \expt{\int p^{(1)}(x,\bar x') \log \frac{\bar Z_{\Dt_i}(\bar x;\theta)}{Z^{(1)}_{\Dt_i}(x)} d \bar x' }{\nu_{i-1}} 
\\
  &+ \nabla_\theta \expt{\int p^{(1)}(x,\bar x') K_{\Dt_i}(\bar x, \bar x';\theta)  d \bar x'}{\nu_{i-1}}
\end{align*}

By replacing $Z^{(1)}_{\Dt_i}$, $\bar Z_{\Dt_i}$ and using that $\int p^{(1)}(x,\bar x') d\bar x' = 1$ the first term is equal to
\begin{align*}
\nabla_\theta \expt{\frac{1}{2} \log \det \left( \bar \Sigma(\bar x;\theta) \Sigma_{1,1}^{-1}(x) \right) }{\nu_{i-1}} &= \frac{1}{2} \nabla_\theta \expt{\log \det \left( \bar \Sigma(\bar x;\theta) (\CG \Sigma(x) \CG^{tr})^{-1} \right) }{\nu_{i-1}} \\
&= -\frac{1}{2} \nabla_\theta \expt{ \log \det \left(  (\CG \Sigma(x) \CG^{tr}) \bar \Sigma^{-1}(\bar x;\theta) \right) }{\nu_{i-1}}\, .
\end{align*}

Consider the second term. We can split $K_{\Dt_i} = K_{\Dt_i}(\bar x, \bar x';\theta)$ as follows
\begin{align*}
K_{\Dt_i} &=
  \frac{1}{2{\Dt_i}}(\CG m_{\Dt_i}(x){ - }\CG m_{\Dt_i}(x) {+} \bar x'{-}\bar m_{\Dt_i}(\bar x;\theta))^{tr} \bar \Sigma^{-1}(\bar x;\theta)
  \\ & \quad \quad \cdot (\CG m_{\Dt_i}(x) {-} \CG m_{\Dt_i}(x){+}\bar x'{-}\bar m_{\Dt_i}(\bar x;\theta))  \\
& =  \frac{1}{2{\Dt_i}}(\CG m_{\Dt_i}(x) -\bar m_{\Dt_i}(\bar x;\theta))^{tr} \bar \Sigma^{-1}(\bar x;\theta)(\CG m_{\Dt_i}(x) -\bar m_{\Dt_i}(\bar x;\theta))  \\
& + \frac{1}{2{\Dt_i}} ( \bar x' - \CG m_{\Dt_i}(x))^{tr} \bar \Sigma^{-1}(\bar x;\theta)( \bar x'- \CG m_{\Dt_i}(x)) \\
&= \frac{{\Dt_i}}{2} (\bar b(\bar x;\theta) -\CG b(x))^{tr} \bar \Sigma^{-1}(\bar x;\theta) (\bar b(\bar x;\theta) -\CG b(x)) \\
& + \frac{1}{2{\Dt_i}} ( \bar x' - \CG m_{\Dt_i}(x))^{tr} \bar \Sigma^{-1}(\bar x;\theta)( \bar x'- \CG m_{\Dt_i}(x)) 
\, .
\end{align*}

\noindent
Now use $\int p^{(1)}(x,\bar x')d\bar x'=1$ to get
\begin{align}
\nonumber
 \nabla_\theta & \expt{\int p^{(1)}(x,\bar x') K_{\Dt_i}(\bar x, \bar x';\theta) d \bar x'}{\nu_{i-1}} = \\
\tag{I} \label{eq:summ1}
&\frac{{\Dt_i}}{2} \nabla_\theta \expt{ (\bar b(\bar x;\theta) -\CG b(x))^{tr} \bar \Sigma^{-1}(\bar x;\theta)((\bar b(\bar x;\theta) -\CG b(x)))}{\nu_{i-1}}\\
\tag{II} \label{eq:summ2}
&+\frac{1}{2{\Dt_i}} \nabla_\theta \expt{\int  p^{(1)}(x,x') ( \bar x' - \CG m_{\Dt_i}(x))^{tr} \bar \Sigma^{-1}(\bar x;\theta)( \bar x'- \CG m_{\Dt_i}(x))d \bar x' }{\nu_{i-1}} \, .
\end{align}

For the second summand, \eqref{eq:summ2}, consider that $\bar \Sigma(\bar x;\theta)$ is a covariance matrix, so exists a matrix $S=S(\bar x;\theta)$ such that 
$$\bar \Sigma^{-1}(\bar x;\theta) = S^{tr}(\bar x;\theta)S(\bar x;\theta) \, .$$
 Let $z := S(\bar x;\theta) (\bar x' - \CG m_{\Dt_i}(x))$, so we have
$$
\bar x' - \CG m_{\Dt_i}(x) = S^{-1}(\bar x;\theta)z \, ,
$$
and therefore
$$
( \bar x' - \CG m_{\Dt_i}(x))^{tr} \bar \Sigma^{-1}(\bar x;\theta)( \bar x'- \CG m_{\Dt_i}(x))  =  z^{tr} (S^{-1})^{tr}S^{tr}S S^{-1}z = z^{tr}z \, .
$$
Moreover,
\begin{align*}
p^{(1)}(x,\bar x') &= \frac{1}{Z^{(1)}_{\Dt_i}(x)} \exp \left\{ -\frac{1}{2{\Dt_i}}(\bar x'-\CG m_{\Dt_i}(x))^{tr} \Sigma_{1,1}^{-1}(x)(\bar x'-\CG m_{\Dt_i}(x)) \right\} \\ &= \frac{1}{Z^{(1)}_{\Dt_i}(x)}  \exp \left\{  -\frac{1}{2{\Dt_i}} z^{tr}(S^{-1})^{tr} \Sigma_{1,1}^{-1}(x) S^{-1}z  \right\} \\
&= \frac{1}{\sqrt{(2 \CG {\Dt_i})^{\bar d} \det(\Sigma_{1,1}(x))}} \exp \left\{  -\frac{1}{2{\Dt_i}} z^{tr} \tilde \Sigma^{-1}(x) z  \right\}  \, ,
\end{align*}
where $\tilde \Sigma(x) := S \, \Sigma_{1,1}(x) S^{tr}$.

Now perform the change of variables $\det(S) d\bar x' = dz$ to get that 
\begin{align*}
\eqref{eq:summ2} &= \frac{1}{2{\Dt_i}} \nabla_\theta \expt{\frac{1}{\det(S)}\frac{1}{\sqrt{(2 \CG {\Dt_i})^{\bar d} \det(\Sigma_{1,1}(x))}} \exp \left\{  -\frac{1}{2{\Dt_i}} z^{tr} \tilde \Sigma^{-1}(x) z  \right\} z^{tr}z dz }{\nu_{i-1}} \\
& = \frac{1}{2{\Dt_i}} \nabla_\theta \expt{\frac{1}{\sqrt{(2 \CG {\Dt_i})^{\bar d} \det(\tilde \Sigma(x))}} \exp \left\{  -\frac{1}{2{\Dt_i}} z^{tr} \tilde \Sigma^{-1}(x) z  \right\} z^{tr}z dz }{\nu_{i-1}} \\
&= \frac{1}{2{\Dt_i}} \nabla_\theta \expt{\expt{z^{tr}z}{z}}{\nu_{i-1}} \\
&= \frac{1}{2{\Dt_i}} \nabla_\theta \expt{\tr{\expt{zz^{tr}}{z}} }{\nu_{i-1}}\\
&= \frac{1}{2{\Dt_i}} \nabla_\theta \expt{\tr{{\Dt_i} \tilde \Sigma(x)} }{\nu_{i-1}}\\
&= \frac{1}{2} \nabla_\theta \expt{\tr{S \Sigma_{1,1}(x) S^{tr}}}{\nu_{i-1}} \\
&= \frac{1}{2} \nabla_\theta \expt{\tr{\CG \Sigma(x) \CG^{tr} \bar \Sigma^{-1}(\bar x;\theta)}}{\nu_{i-1}}
\, ,
\end{align*}
where $z \sim \mathcal{N}(0,{\Dt_i}\, \tilde{\Sigma}(x))$.

\noindent
Summarizing we have 
\begin{align*}
\nabla_\theta \rer{P_i}{Q_i^\theta} &=-\frac{1}{2} \nabla_\theta \expt{ \log \det \left(  (\CG \Sigma(x) \CG^{tr}) \bar \Sigma^{-1}(\bar x;\theta) \right) }{\nu_{i-1}}\\
&+ \frac{1}{2} \nabla_\theta \expt{\tr{\CG \Sigma(x) \CG^{tr} \bar \Sigma^{-1}(\bar x;\theta)}}{\nu_{i-1}}\\
&+ \frac{{\Dt_i}}{2} \nabla_\theta \expt{ (\bar b(\bar x;\theta) -\CG b(x))^{tr} \bar \Sigma^{-1}(\bar x;\theta)(\bar b(\bar x;\theta) -\CG b(x))}{\nu_{i-1}} \\
& =:\nabla_\theta \left[R_i(\theta) +  M_i(\theta) \Dt_i \right] 
\end{align*}
\end{proof}

\begin{proof}(of Theorem \ref{thm:argmin}).
Apply Lemma \ref{lemm:argmin} and linearity on \eqref{eq:pRE}. Then we have the same minimizers.
\end{proof}

\label{sec:thm_bound_proof}
We first prove the following Lemma which is instrumental for the proof of this theorem. 
\begin{prop}
\label{prop:ineq_R}
Let 
$$B:= (\CG \Sigma(x) \CG^{tr}) \bar \Sigma^{-1}(\bar x;\theta) \quad \text{ be a $\bar d {\times} \bar d$ matrix} \, .$$ 
Then 
$$\tr{B} - \log \det ( B )  \geq \bar d\, .$$
Moreover, the equality is attained if and only if $B=I_{\bar d \times \bar d}$.
\end{prop}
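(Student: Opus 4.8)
The plan is to diagonalize $B$ and thereby reduce the matrix statement to a one-dimensional calculus fact. First I would observe that $A := \CG \Sigma(x) \CG^{tr}$ and $\bar \Sigma^{-1}(\bar x;\theta)$ are both symmetric and positive definite (assuming, as in Theorem~\ref{thm:argmin}, that the covariance blocks $\CG\Sigma(x)\CG^{tr}$ and $\bar\Sigma(\bar x;\theta)$ are non-singular; the former is positive semidefinite as a compression of the Gram matrix $\Sigma = \sigma\sigma^{tr}$, the latter the inverse of a covariance). Although the product $B = A\,\bar\Sigma^{-1}$ need not itself be symmetric, it is similar to the symmetric positive-definite matrix $A^{1/2}\bar\Sigma^{-1}A^{1/2}$, since $A^{-1/2} B A^{1/2} = A^{1/2}\bar\Sigma^{-1}A^{1/2}$. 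Hence $B$ is diagonalizable with real, strictly positive eigenvalues $\lambda_1,\dots,\lambda_{\bar d}$.

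With the spectrum in hand, I would write $\tr{B} = \sum_{i=1}^{\bar d}\lambda_i$ and $\log\det(B) = \sum_{i=1}^{\bar d}\log\lambda_i$, so that the claim becomes the separable inequality $\sum_{i=1}^{\bar d}(\lambda_i - \log\lambda_i)\ge \bar d$. This reduces everything to the scalar lemma $g(\lambda) := \lambda - \log\lambda \ge 1$ for $\lambda > 0$, which I would verify by noting that $g$ is strictly convex (as $g''(\lambda)=\lambda^{-2}>0$) with its unique critical point at $\lambda = 1$, where $g(1)=1$; equivalently, this is the standard bound $\log\lambda \le \lambda - 1$. Summing over $i$ then yields $\tr{B}-\log\det(B)\ge\bar d$.

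For the equality case, strict convexity of $g$ forces $\lambda_i = 1$ for every $i$ whenever the sum attains its minimum $\bar d$. Since $B$ is diagonalizable and all of its eigenvalues equal $1$, it follows that $B = I_{\bar d\times\bar d}$; the converse is immediate, as $\tr{I}=\bar d$ and $\log\det(I)=0$. Translating back, $B = I$ is precisely the condition $\bar \Sigma(\bar x;\theta) = \CG\Sigma(x)\CG^{tr}$, which is exactly the equality clause needed for the bound \eqref{eq:ineq_R} on $R_i(\theta)$.

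The step I expect to require the most care is establishing that $B$, being a product of two symmetric positive-definite matrices, has real positive eigenvalues and is diagonalizable, since $B$ is generally non-symmetric. The similarity transformation $B \sim A^{1/2}\bar\Sigma^{-1}A^{1/2}$ is the essential device that legitimizes the spectral reduction; everything downstream is then an elementary scalar convexity argument.
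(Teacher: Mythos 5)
Your proposal is correct and follows essentially the same route as the paper: diagonalize $B$, reduce to the scalar inequality $\lambda - \log\lambda \geq 1$ for the positive eigenvalues, and conclude equality holds iff all eigenvalues equal $1$, whence $B = I$ by diagonalizability. Your write-up is in fact more careful than the paper's, which simply asserts that $B$ is diagonalizable with positive eigenvalues, whereas you justify this via the similarity $A^{-1/2}BA^{1/2} = A^{1/2}\bar\Sigma^{-1}A^{1/2}$.
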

\begin{proof}
Notice that $B$ is a diagonalizable matrix for any $x\in \mathbb{R}^d$, $\bar x \in \mathbb{R}^{\bar d}$, $\theta \in \Theta$ and therefore the problem is reduced to state that 
$$
- \log \lambda_k + \lambda_k \geq 1\, , \,\, k{=}1,2,...,\bar d\, ,
$$
where $\lambda_k$ are the positive eigenvalues of $B$. By simple optimality arguments we get that $1 - 1/\lambda_k \geq 0$ with equality if and only if $\lambda_k=1$ for $k \in \{1,2,...,\bar d\}$.
\end{proof}
\bibliographystyle{elsarticle-num}
\bibliography{refs}

\end{document}